\theoremstyle{plain}
\newtheorem{thm}{Theorem}[section]
\newtheorem{cor}[thm]{Corollary}
\newtheorem{lem}[thm]{Lemma}
\newtheorem{prop}[thm]{Proposition}
\theoremstyle{definition}
\theoremstyle{remark}
\newtheorem{rem}{Remark}[section]
\numberwithin{equation}{section}
\newcommand{\uple}[1]{\text{\boldmath${#1}$}}
\newcommand{\mods}[1]{\,(\mathrm{mod}\,{#1})}
\def\stacksum#1#2{{\stackrel{{\scriptstyle #1}}
{{\scriptstyle #2}}}}
\newcommand\Xip{\Xi_p}
\newcommand\etavalue{1/30}
\newcommand\Xipt{\Xi^t_p}
\newcommand\Xipw{\Xi^w_p}
\newcommand\vphi{\varphi}
\newcommand\eps{\varepsilon}
\newcommand\Gal{{\mathrm {Gal}}}
\newcommand\Tr{{\mathrm {Tr}}}
\newcommand\Hom{{\mathrm {Hom}}}
\newcommand{\ov}[1]{\overline{#1}}
\newcommand{\peter}[1]{\langle{#1}\rangle}
\newcommand\sumsum{\mathop{\sum\sum}\limits}
\newcommand{\lf}{\lambda_f}
\newcommand{\Qb}{{\mathbb Q}}
\newcommand{\Qq}{{\mathbb Q}}
\newcommand{\Cc}{{\mathbb C}}
\newcommand{\Zz}{{\mathbb Z}}
\newcommand{\Zpt}{{\mathbb Z}^\times_p}
\newcommand{\Qf}{{\mathbb Q}_f}
\newcommand{\Cb}{{\mathbb C}}
\newcommand{\Zp}{{\mathbb Z}_p}
\newcommand{\Qp}{{\mathbb Q}_p}
\newcommand{\bfalpha}{\uple{\alpha}}
\newcommand{\bfbeta}{\uple{\beta}}
\newcommand{\mcL}{{\mathcal L}}
\newcommand{\sym}{\mathrm{sym}}
\newcommand{\sump}[1]{{\sum_{#1}}^{\!\!\!\!\!\!*}}
\newcommand{\hide}[1]{{}}
\providecommand{\keywords}[1]{\textbf{\textit{Index terms---}} #1}
\begin{document}

\title[Hecke fields and $L$-values]{Generation of Hecke fields by  squares   of cyclotomic twists of modular $L$-values}
\author{ Valentin Blomer, Ashay Burungale, Philippe Michel and Jun-hwi Min}
\address{Mathematisches Institut, Endenicher Allee 60, 53115 Bonn, Germany} \email{blomer@math.uni-bonn.de}
\address{The University of Texas at Austin, 
Department of Mathematics, 
2515 Speedway, 
Austin, TX 78712} \email{ashayk@utexas.edu}
\address{EPFL-SB-MATH-TAN, Station 8, 
1015 Lausanne, Switzerland}\email{philippe.michel@epfl.ch}
\address{Ulsan National Institute of Science and Technology, Ulsan, Korea}\email{beliefonme159@unist.ac.kr}

\begin{abstract}
		Let $f$ be a non-CM elliptic newform without a quadratic inner twist, $p$ an odd prime and $\chi$ a Dirichlet character of $p$-power  order and sufficiently large $p$-power conductor. 
		We show that the compositum $\Qb_{f}(\chi)$ of the Hecke fields associated to $f$ and  $\chi$ is generated by the square of the absolute value of the corresponding central $L$-value $L^{\rm alg}(1/2, f \otimes \chi)$ over $\Qb(\mu_p)$. 
		The proof is based among other things on techniques, like sieve methods, used for the recent resolution of unipotent mixing conjecture by the first and third named authors. 
		\end{abstract}
\keywords{Modular $L$-values, Hecke fields}
\subjclass[2020]{Primary 11F11, 11F30, 11N36}

\thanks{First author supported by ERC Advanced Grant  101054336 and Germany's Excellence Strategy grant EXC-2047/1 -- 390685813 as well as the German research Foundation Project-ID 491392403 -- TRR 358. Second author supported by NSF grant DMS 2302064. Third author supported by SNF Grant 200021L-231880 and 200021-197045.}

\maketitle


	\section{Introduction}		
		\subsection{Hecke fields}				A basic invariant associated to a motive $\mathcal{M}$ over a number field is the algebraic part $L^{\rm alg}(\mathcal{M},0)$ of a corresponding critical $L$-value. It lies in the field of definition of $\mathcal{M}$. 
	     Since the $L$-values are expected to be often non-zero as $\mathcal{M}$ varies in a family (besides the case of central $L$-values for self-dual families with root number $-1$), 
	     one may ask if $L^{\rm alg}(\mathcal{M},0)$ generates the field of definition of $\mathcal{M}$ generically. The aim of this paper is to consider a variant of this problem for   $|L^{\rm alg}(\mathcal{M},0)|^2$ when $\mathcal{M}$ is associated to an elliptic newform (cf.~Theorem~\ref{mainthm}). 
	     		
As a precursor and motivation to this question, we recall		
a seminal result of Luo--Ramakrishnan: elliptic newforms are determined uniquely by the associated central $L$-values with cyclotomic or quadratic twists (cf.~\cite[Theorem A, B]{luo1997determination}). The question whether Hecke fields are generated by algebraic part of these $L$-values or powers thereof can be seen as a refinement of this statement.  
	 Luo-Ramakrishnan \cite{luo1997determination} and Sun \cite{sun2019generation} have shown that modular $L$-values twisted by cyclotomic characters generate Hecke fields.
	 
	  In this paper,  we initiate variants of this problem and consider  the generation of Hecke fields by squares of the modular $L$-values. 

		\medskip 
Let us first recall the works of Luo--Ramakrishnan and Sun. Given a holomorphic cuspidal newform $f$  with level $R$ and trivial central character, e.g.\ the modular form associated with an elliptic curve $E/\Qq$, its  {\em Hecke field} is the (finite degree, totally real) number field generated by its Hecke eigenvalues 
		\begin{align*}
		\mathbb{Q}_f=\mathbb{Q}(a_{f}(n):\ n\geq 1).
		\end{align*}

			Given a Dirichlet character $\chi$ of modulus $q$ with root number  $\eps(\chi)=\chi(-1)\in\{\pm 1\}$ and Gauss sum $G({\chi})$ (relative to  the standard additive character $e_q(n)=\exp(\frac{2\pi i n}{q})$), we denote by $$\mathbb{Q}_f(\chi)=\Qq_f(\chi(n):\ n\geq 1)$$
		the number field obtained by adjoining the image of $\chi$ to $\mathbb{Q}_f$ and by $L(s,f\otimes \chi)$ the  twisted Hecke $L$-function, normalized (analytically) so that its functional equation is centered at $s=1/2$.
 
		 By the work of Shimura \cite{Sh1, Sh2}, there exist two complex periods $\Omega_{f}^\pm\in \mathbb{C}^\times$ (depending only on $f$)  such that for any $\chi$ we have 
		\begin{align*}
		L^{\rm alg}(1/2, f \otimes \chi):= 	L_f(\chi):=\frac{G(\overline{\chi})L(1/2,f\otimes \chi)}{\Omega_f^{\epsilon(\chi)}}\in \mathbb{Q}_f(\chi);
		\end{align*}
		We call the $L^{\rm alg}(1/2, f \otimes \chi)$ {\em arithmetically normalized $L$-values}.
		
Shimura also proved a reciprocity law: for any $\sigma\in \Hom_\mathbb{Q}(\overline{\mathbb{Q}},\mathbb{C})$, one has
		\begin{align}\label{shim reci}
			L_f(\chi)^\sigma=L_{f^\sigma}(\chi^\sigma)
		\end{align}
		where $f^\sigma$ is the modular form with Hecke eigenvalues $a_{f}(n)^\sigma=\sigma(a_{f}(n))$ and $\chi^\sigma(n):=\sigma(\chi(n))$.
		
		Given a prime $p\geq 3$  not dividing $R$, a classical and important problem is the investigation of the variation of the arithmetically normalized $L$-values $L_{f}(\chi)$ when $\chi$ varies over the set $\Xi_p$  of Dirichlet characters whose conductor $q$ is a $p$-power. We denote by  $\Xi^w_p$ the subset of (wild) Dirichlet characters whose conductor and order are both $p$-powers. In other words, we fix the tame part to be trivial and average only over the wild part, so that for each given conductor $p^h$, $h \geq 2$, we obtain precisely one Galois orbit $\Xi^w_{p}(h)$ of characters having order $p^{h-1}$, cf.\ \eqref{decom} below.  Note that for   $\chi \in \Xi^w_p$ we have $\eps(\chi)=+1$. 	A landmark result due to Rohrlich \cite{Rohr} is  that $L_f(\chi)\neq 0$  for all but finitely many $\chi\in\Xipw$. 
	
	A further important work is the afore-mentioned paper of Luo--Ramakrishnan \cite{luo1997determination} who proved that $f$ as well as its Hecke field can be recovered from the set of algebraic values $L_f(\chi)$ for $\chi\in\Xi_p^w$. 
	A consequence of their work concerns the field generated by the arithmetic {\em ratios} defined by
		\begin{align*}
			A_f(\chi,\chi_0)=L_f(\chi)L_f(\chi_0)^{-1}\in \overline{\mathbb{Q}}.
		\end{align*}
		for $\chi,\chi_0\in \Xi_p^w$ such that $L_f(\chi_0)\not=0$ (which exists as mentioned above). More precisely,  \cite[Theorem $C'$]{luo1997determination} states the following:

		\begin{thm} \label{Luo}
			Let $f$ be a newform of level $R$. Let $p\nmid 2R$ be a prime and let $\mu_{p^\infty}\subset \ov\Qq^\times$ be the group of $p$-power roots of unity. Then
			\begin{align*}
				\mathbb{Q}_f(\mu_{p^\infty})=\mathbb{Q}(\mu_{p^\infty},A_f(\chi,\chi_0):\chi\in \Xi^w_p).
			\end{align*}
		\end{thm}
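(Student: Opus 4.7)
The plan is to prove both inclusions by a Galois-theoretic reduction. The inclusion $\subset$ is immediate: for $\chi\in\Xi_p^w$ the value $A_f(\chi,\chi_0) = L_f(\chi)/L_f(\chi_0)$ lies in $\mathbb{Q}_f(\chi) \subset \mathbb{Q}_f(\mu_{p^\infty})$, once $\chi_0$ is chosen with $L_f(\chi_0)\ne 0$ (which is possible by Rohrlich's non-vanishing theorem). For the reverse inclusion, let $K$ denote the right-hand side; since $\mu_{p^\infty}\subset K$, by Galois descent it suffices to show that every $\sigma\in\Gal(\overline{\mathbb{Q}}/K)$ fixes $\mathbb{Q}_f$.

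Fix such a $\sigma$. Since $\sigma$ fixes $\mu_{p^\infty}$, one has $\chi^\sigma=\chi$ for every $\chi\in\Xi_p^w$, and since $\sigma$ also fixes $A_f(\chi,\chi_0)$ by hypothesis, the Shimura reciprocity \eqref{shim reci} applied to $L_f(\chi)/L_f(\chi_0)$ yields
\[
\frac{L_{f^\sigma}(\chi)}{L_{f^\sigma}(\chi_0)} \;=\; \frac{L_f(\chi)}{L_f(\chi_0)}, \qquad \chi\in\Xi_p^w .
\]
Because $\eps(\chi)=+1$ throughout $\Xi_p^w$, the Shimura periods collapse into a single constant, so unwinding the definition of $L_f(\chi)$ produces
\[
L(1/2,f^\sigma\otimes\chi) \;=\; c\cdot L(1/2,f\otimes\chi), \qquad \chi\in\Xi_p^w,
\]
for some $c\in\overline{\mathbb{Q}}^\times$ independent of $\chi$. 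The crux is then to conclude $f^\sigma=f$ from this proportionality. The plan is Mellin inversion together with orthogonality: substitute the approximate functional equation for both $L$-values and average the identity against $\chi(n)$ over all $\chi\in\Xi_p^w$ of conductor dividing $p^k$. Orthogonality on the pro-$p$ quotient of $(\mathbb{Z}/p^k\mathbb{Z})^\times$ isolates the Hecke eigenvalue $a_f(n)$ for $(n,p)=1$ in a dyadic range $n\asymp p^k$, while the dual contribution (weighted by the Gauss-sum factor $G(\chi)^2$ from the root number) becomes negligible as $k$ grows. The resulting identity $a_{f^\sigma}(n)=c\,a_f(n)$ for all $(n,p)=1$, evaluated at $n=1$, forces $c=1$; strong multiplicity one then upgrades this to $f^\sigma=f$, so $\sigma$ fixes every Hecke eigenvalue.

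The main obstacle will be the harmonic-analysis step. The wild set $\Xi_p^w$ is only a proper subgroup (of index $p-1$) of all Dirichlet characters modulo $p^k$, so its orthogonality relation projects onto the pro-$p$ quotient of $(\mathbb{Z}/p^k\mathbb{Z})^\times$ only, and recovering individual coefficients $a_f(n)$ will require combining averages across several conductors and a dyadic decomposition in $n$. Keeping the main term dominant over the dual/off-diagonal Gauss-sum contribution, and verifying that $c$ is genuinely independent of $\chi$, constitute the delicate analytic input — which is precisely what is supplied by the methods of \cite{luo1997determination}.
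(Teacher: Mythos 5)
The paper itself does not prove this theorem; it is quoted verbatim as Theorem~$C'$ of Luo--Ramakrishnan \cite{luo1997determination}. Your Galois-descent reduction is exactly how Luo--Ramakrishnan derive Theorem~$C'$ from their determination theorem (their Theorem~A): fix $\sigma\in\Gal(\overline{\Qq}/K)$, use the fact that $\sigma$ fixes $\mu_{p^\infty}$ (hence $\chi^\sigma=\chi$ for wild $\chi$) and fixes the ratios $A_f(\chi,\chi_0)$, and apply Shimura reciprocity to get $L(1/2,f^\sigma\otimes\chi)=c\,L(1/2,f\otimes\chi)$ with $c$ independent of $\chi$; then Theorem~A yields $f^\sigma=f$, and the normalization $a_{f^\sigma}(1)=a_f(1)=1$ pins down $c=1$. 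All of this is correct, and the final Galois-theoretic step (every $\sigma$ over $K$ fixing $\Qq_f$ forces $\Qq_f\subseteq K$) is standard.

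The part you sketch but do not prove is precisely Theorem~A: that the proportionality $L(1/2,f^\sigma\otimes\chi)=c\,L(1/2,f\otimes\chi)$ over wild characters forces $f^\sigma=f$. You correctly flag the two real difficulties: (i) $\Xi_p^w(k)$ is a coset of index $p-1$ in the full character group modulo $p^k$, so orthogonality only detects congruences on the pro-$p$ quotient, and the ``diagonal'' $m=n$ comes accompanied by $m\equiv \xi n$ for $\xi\in\mu_{p-1}$, which may be of size $\gg p^{k/(p-1)}$; and (ii) controlling the dual sum from the functional equation. These are genuine and are exactly what Luo--Ramakrishnan's analytic argument supplies, so deferring to \cite{luo1997determination} at that point is legitimate. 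One cosmetic slip: you assert $c\in\overline{\Qq}^\times$, but $c=L(1/2,f^\sigma\otimes\chi_0)/L(1/2,f\otimes\chi_0)$ involves a ratio of transcendental periods $\Omega^+_{f^\sigma}/\Omega^+_f$ and need not be algebraic a priori; this is harmless since the argument only uses that $c$ is a nonzero constant, determined to be $1$ at the end. In summary, your proposal reproduces the structure of the original proof; the remaining work is the first-moment asymptotic over $\Xi_p^w$, which you have correctly identified but not executed.
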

		
		Recently, Sun \cite[Theorem D and Theorem 5.4]{sun2019generation} established a significant refinement of Theorem \ref{Luo}: 
		\begin{thm} \label{sun}
			Let $f$ be a newform of level $R$. Let $p\nmid R$ be a prime and let $\mu_{p}\subset\ov\Qq^\times$ be the group of $p$-th roots of unity. For all but finitely many $\chi \in \Xi^w_p$, one has
			\begin{equation*}
				\mathbb{Q}_f(\chi)=\Qq(\mu_p,L_f(\chi)).
			\end{equation*}
		
		\end{thm}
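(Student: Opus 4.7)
The containment $\Qq(\mu_p, L_f(\chi)) \subseteq \Qq_f(\chi)$ is immediate from the definitions and from the fact that $\mu_p \subseteq \Qq(\chi)$ for any non-trivial $\chi \in \Xipw$; the task is to prove the reverse containment for all but finitely many $\chi$. By Galois theory, it suffices to show that every $\sigma \in \Gal(\ovQq/\Qq(\mu_p))$ fixing $L_f(\chi)$ also fixes $\Qq_f(\chi)$. Shimura's reciprocity \eqref{shim reci} transforms this fixation into an equality $L_f(\chi) = L_g(\chi^a)$, where $g := f^\sigma$ lies in the finite Galois orbit $\mathcal{S}(f)$ of $f$ over $\Qq$ and $\chi^\sigma = \chi^a$ for some $a \equiv 1 \pmod{p}$ (since $\sigma$ fixes $\mu_p$). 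The theorem then reduces to showing that, for each ``non-trivial'' pair $(g,a)$ --- meaning $g \neq f$ or $a \not\equiv 1 \pmod{\mathrm{ord}(\chi)}$ --- the exceptional set
$$E_{g,a} := \{\chi \in \Xipw : L_f(\chi) = L_g(\chi^a)\}$$
is finite.

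\textbf{Analytic core.}
For a fixed non-trivial pair $(g,a)$, I would prove finiteness of $E_{g,a}$ by a second-moment argument along the cyclotomic tower. The approximate functional equation expands $L(1/2, f \otimes \chi)$ as a short sum $\sum_m \lambda_f(m)\chi(m) m^{-1/2} V(m/p^n)$, and similarly for the $g$-side; orthogonality of primitive characters modulo $p^n$ then collapses $\sum_\chi |L_f(\chi) - L_g(\chi^a)|^2$ into a shifted Rankin--Selberg sum whose main diagonal is proportional to $L(1, f \otimes \overline g)$ at the shift induced by $a$. This diagonal vanishes by strong multiplicity one when $g \neq f$, yielding a power-saving bound on the count of bad $\chi$ of each conductor. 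To upgrade density zero to absolute finiteness, I would combine this with a rigidity principle: the map $\chi \mapsto L_f(\chi)$ interpolates $p$-adically along the cyclotomic tower, so an infinite coincidence $L_f(\chi) = L_g(\chi^a)$ would force an identity at the level of Iwasawa elements, which can be precluded by comparison of Euler factors at an auxiliary prime $\ell \nmid Rp$.

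\textbf{Main obstacle.}
The decisive case is $g = f$ with $a \not\equiv 1 \pmod{\mathrm{ord}(\chi)}$, for which strong multiplicity one is vacuous and the diagonal does not vanish \emph{a priori}. Here one must exhibit direct cancellation in a shifted convolution sum of the form $\sum_m \lambda_f(m)\overline{\lambda_f(m')}$ with $m,m'$ linked by the $a$-action, exploiting the absence of rigid self-symmetries of $f$ (no CM, no inner twist of $p$-power type) and a genuinely $p$-adic analysis of how the operation $\chi \mapsto \chi^a$ perturbs $\chi$ for $a \in (1+p\Zp) \setminus \{1\}$. Making this cancellation quantitative and uniform in $a \in (\Zz/p^n\Zz)^\times \cap (1 + p\Zp)$ is the technical heart of the argument, and is precisely the step where sieve-type inputs, akin to those invoked for the main theorem of the present paper, become indispensable.
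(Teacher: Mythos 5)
Your Galois-theoretic framing is in the right spirit: Shimura reciprocity \eqref{shim reci} is the pivot, and one must preclude the coincidences $L_f(\chi) = L_{f^\sigma}(\chi^\sigma)$ for nontrivial $\sigma$. However the reduction to a finite list of pairs $(g,a)$ fails. The exponent $a$ lives in $(\Zz/p^{h-1}\Zz)^\times \cap (1+p\Zz)$ and varies with the conductor $p^h$, so there are infinitely many candidate $a$'s; proving each $E_{g,a}$ finite does not bound the union, and the uniformity you defer is exactly the content of the theorem, not a technicality. Your analytic engine has two further gaps. Averaging $|L_f(\chi)-L_g(\chi^a)|^2$ over all primitive characters modulo $p^n$ at best gives a density-zero statement per conductor, never absolute finiteness across conductors, and the proposed ``Iwasawa-theoretic upgrade'' is speculative (it presupposes a $p$-adic interpolation property and a rigidity statement that would themselves need proof). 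Most tellingly, the case you flag as decisive --- $g = f$ with $a \not\equiv 1$, where strong multiplicity one and the Rankin--Selberg diagonal argument are vacuous --- is left as a black box; you have located the difficulty without resolving it.

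The argument Sun actually uses (and which this paper replays for $|L_f(\chi)|^2$ in Propositions \ref{thm64} and \ref{l-value generate root of unity}) avoids all of this by a change of order of quantifiers. Fix a prime $\ell\equiv 1\pmod p$, $\ell\nmid pR$, and a small $t$, and form the trace $\Tr_{\mathbb{Q}_f(\chi)/\mathbb{Q}}\bigl(\chi(\ell^t)L_f(\chi)\bigr)$ over the single Galois orbit of conductor $p^h$. Orthogonality over that orbit (Proposition \ref{galoisav}, not orthogonality over all primitive characters) converts the sum into a congruence condition and isolates a diagonal main term; the required analytic input is the non-vanishing of a twisted \emph{first} moment, much more tractable than your second-moment route. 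The clincher is then group-theoretic: $\chi(\ell^t)$ is a primitive $p^{h-1}$-st root of unity, so by \eqref{orth} its trace to any proper subfield $L\subsetneq\mathbb{Q}(\chi)$ with $\mu_p\subseteq L$ vanishes. Hence if $\mathbb{Q}(\chi)\not\subseteq\mathbb{Q}(\mu_p,L_f(\chi))$ held for infinitely many $\chi$, the twisted trace would vanish infinitely often, contradicting the non-vanishing. This kills your ``decisive case'' without any analysis of the $a$-action. The remaining containment $\mathbb{Q}_f\subseteq\mathbb{Q}(\chi,L_f(\chi))$ follows from a determination theorem \`a la Luo--Ramakrishnan fed through Shimura reciprocity, as in Proposition \ref{thm64}. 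Finally, note that the statement you are asked to prove is, as Remark \ref{orth remark} observes, the \emph{corrected} form of Sun's theorem: the base field must contain $\mu_p$, precisely because \eqref{orth} fails for real subfields of $\mathbb{Q}(\chi)$ --- a point your ``immediate containment'' sentence glosses over but which is load-bearing.
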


		\subsection{Main result}
		
	In this paper we consider a further refinement of this question and investigate when the following inclusion is an equality
			  $$
\Qq(\mu_p,|L_{f}(\chi)|^2)\subset \mathbb{Q}_{f}(\chi). 
$$
Recall that a holomorphic Hecke eigenform $f$ has a  {\em non-trivial quadratic inner twist} if there exist
$\sigma \in {\rm Aut}(\Cb)$ and a (non-trivial) quadratic character $\eta$ of conductor $D$ 	such that for any $p\nmid RD$ the Fourier coefficients $a_f$ of $f$ satisfy 
$$a_f(p)^{\sigma}=\eta(p)a_f(p).$$

Our main result is the following (see~Theorem \ref{main theorem later section} for a slightly more general result):
	\begin{thm}\label{mainthm}
	Let $f\in S^{{\rm new}}_{2\kappa}(R)$ be a non-CM newform of level $R$ and weight $2\kappa$ such that
	\begin{equation}
		\text{ $f$ has no non-trivial quadratic inner twist}.
\tag*{(Non-quad)} \label{non-quad}
	\end{equation}
 Let $p> 2\max(2,|\Hom_\mathbb{Q}(\mathbb{Q}_f,\mathbb{C})|)$ be an odd prime coprime to $R$. Then for all but finitely many $\chi\in \Xipw$, one has 
		$$
			\Qb_f(\chi)=\Qq(\mu_p,|L_f(\chi)|^2).
		$$
\end{thm}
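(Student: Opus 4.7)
The plan is to reduce the desired field equality to ruling out nontrivial Galois identifications among twisted central $L$-values, and then to rule out those identifications by an averaging argument built on the sieve-theoretic techniques used by Blomer and Michel in their work on the unipotent mixing conjecture. The containment $\Qb(\mu_p,|L_f(\chi)|^2)\subseteq \Qb_f(\chi)$ is immediate: $L_f(\chi)\in\Qb_f(\chi)$ by Shimura, and $\overline{L_f(\chi)}=L_f(\bar\chi)$ lies in $\Qb_f(\chi)$ since $\bar\chi=\chi^{-1}$ is valued in $\mu_{p^\infty}\subseteq \Qb_f(\chi)$. For the reverse inclusion, \thmref{sun} gives $\Qb_f(\chi)=\Qb(\mu_p,L_f(\chi))$ for cofinitely many $\chi\in\Xipw$, so it suffices to show that the stabilizer in $\Gal(\overline{\Qb}/\Qb(\mu_p))$ of $|L_f(\chi)|^2=L_f(\chi)L_f(\bar\chi)$ coincides with the stabilizer of $L_f(\chi)$ for cofinitely many $\chi$.

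Shimura reciprocity \eqref{shim reci} translates this into an $L$-value identity: a $\sigma\in\Gal(\overline{\Qb}/\Qb(\mu_p))$ acting non-trivially on $\Qb_f(\chi)$ fixes $|L_f(\chi)|^2$ if and only if
$$L_{f^\sigma}(\chi^\sigma)\,L_{f^\sigma}(\overline{\chi^\sigma})\;=\;L_f(\chi)\,L_f(\bar\chi).$$
Such a $\sigma$ restricts on $\Qb_f$ to one of at most $|\Hom_\Qb(\Qb_f,\Cb)|$ Galois conjugations, and sends $\chi$ to $\chi^a$ for some $a\in\Zpt$ with $a\equiv 1\pmod p$; the numerical hypothesis $p>2|\Hom_\Qb(\Qb_f,\Cb)|$ allows a clean separation of the actions on $\Qb_f$ and on $\Qb(\mu_{p^\infty})$. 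The task becomes: for each non-trivial pair $(f^\sigma,a)$, show that the displayed identity has only finitely many solutions $\chi\in\Xipw$.

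The analytic core proceeds by contradiction, assuming the identity for infinitely many $\chi$ of growing conductor $q=p^c$ and expanding both sides via the approximate functional equation for the degree-four self-dual $L$-function $L(s,f\otimes\chi)L(s,f\otimes\bar\chi)$, which yields a double sum of the shape
$$\sumsum_{m,n\geq 1}\frac{a_f(m)a_f(n)}{\sqrt{mn}}\chi(m)\bar\chi(n)\,V\!\left(\frac{mn}{q^2}\right),$$
together with its analogue for $(f^\sigma,\chi^a)$. Averaging the difference over primitive characters of conductor $p^c$ and invoking multiplicative orthogonality isolates diagonal contributions constrained by congruences of the form $m^a\equiv n\pmod{p^c}$, pairing Hecke coefficients of $f$ with those of $f^\sigma$. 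These sums are then analyzed via Voronoi summation and the uniform sieve estimates developed by the first and third authors in the unipotent mixing context; together they force an asymptotic automorphic identification between $f$ and $f^\sigma$ up to a twist by a fixed Dirichlet character $\eta$.

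The principal obstacle is exactly the exclusion of such a twist-type identification. If $f^\sigma=f\otimes\eta$ for some non-trivial $\eta$, the central identity can trivialize: the non-CM hypothesis rules out $\eta$ of infinite order, Ribet's structural theorem on inner twists forces $\eta$ to be at most quadratic, and the hypothesis \ref{non-quad} eliminates the remaining quadratic case. The most delicate technical ingredient is the uniform control of the sieve sums as the conductor grows and as $a$ ranges over $1+p\Zp$; it is here, and not in any of the preceding reductions, that the machinery from the unipotent mixing work is decisive.
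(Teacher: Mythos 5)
Your proposal correctly identifies the high-level structure --- reduce to ruling out Galois identifications of $L$-values and then rule them out via Galois-averaged second moments together with sieve technology from the unipotent-mixing work --- and invoking Sun's theorem for the easy direction is a reasonable shortcut not taken in the paper. But the route you sketch for the hard step has a genuine gap. After fixing $\sigma \in \Gal(\ov\Qq/\Qq(\mu_p))$ you propose to rule out $L_{f^\sigma}(\chi^\sigma)L_{f^\sigma}(\overline{\chi^\sigma}) = L_f(\chi)L_f(\bar\chi)$ for almost all $\chi$ by averaging ``over primitive characters of conductor $p^c$'' and isolating congruences $m^a \equiv n \pmod{p^c}$. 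This misidentifies the actual analytic difficulty. The putative identity is known only on a single Galois orbit of $\chi$'s, so any averaging must be restricted to that orbit, i.e.\ to $\Xipw(h)$, which is a $1/(p-1)$-proportion subset of all primitive characters of conductor $p^h$. That restriction, via Proposition~\ref{galoisav} and Hensel's lemma, produces congruences $m \equiv \zeta n \pmod{p^{h-h_0}}$ with $\zeta$ ranging over $\mu_{p-1}(\Zz/p^{h-h_0}\Zz)$ --- not $m^a \equiv n$ --- and it is exactly the non-trivial roots $\zeta \neq \pm 1$ (Section~\ref{f=g error term section}, Proposition~\ref{final-error}) that need the Sato--Tate/sieve input from the unipotent-mixing technique.

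More seriously, the case $\chi^\sigma \neq \chi$, i.e.\ $a \neq 1$, cannot be handled by the moment-plus-sieve analysis you describe, because as the conductor grows $a$ ranges over unboundedly many residues modulo $p^h$ and there is no uniformity in $a$; you flag this as ``the most delicate technical ingredient'' but leave it unresolved, and I do not see how to repair it along those lines. The paper avoids the case $a \neq 1$ altogether by a purely algebraic observation (Proposition~\ref{l-value generate root of unity}): if $K_\chi = \Qq(\mu_p, |L_f(\chi)|^2)$ fails to contain $\Qq(\chi)$, then by the orthogonality relation \eqref{orth} the twisted trace $\Tr_{\Qf(\chi)/\Qq}\bigl(\chi(\ell^t)\,|L_f(\chi)|^2\bigr)$ vanishes, for the trivial reason that $\chi(\ell^t)$ is a primitive $p^{h-1}$-th root of unity (Remark~\ref{remcongruence}) and hence has zero trace to any proper subfield containing $\mu_p$; this contradicts the analytically established non-vanishing of the Galois-averaged twisted moment (Proposition~\ref{nonvan}). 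Thus the moment asymptotics are only ever applied with $\chi^\sigma = \chi$, and the remaining identification where $\sigma$ fixes $\Qq(\chi)$ but moves $\Qf$ is eliminated by the Vandermonde/inner-twist argument (Proposition~\ref{quadtwist} and Theorem~\ref{determination L}). Without this trace-vanishing device, your plan contains a step that would fail.
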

\begin{rem} 
			If the level $R$ is square-free, then the condition \ref{non-quad} holds automatically, as shown by Ribet \cite{ribet1980twists}. 
\end{rem}	

\begin{rem} The assumption that $f$ is not a CM form is used at several places, e.g.\ when quoting the Sato--Tate law in Section \ref{sec52} and for certain properties of the Langlands parameters of $f$ for instance in Proposition \ref{quadtwist}. This is mostly for convenience, and recalling that a CM holomorphic modular form of weight $k \geq 2$ comes from a Hecke character, the situation in this case should   in fact be easier.  \end{rem}	


	\begin{cor}\label{cor14}
		In the setting of Theorem \ref{mainthm}, 
		assume further  that $$((p-1)p,[\Qb_{f}:\Qb])=1.$$ 
		Then for all but finitely many $\chi\in \Xipw$, we have
			\begin{align*}
				\mathbb{Q}(|L_f(\chi)|^2)\supseteq\mathbb{Q}_f.
			\end{align*}
	\end{cor}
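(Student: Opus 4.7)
The plan is to derive this Corollary from Theorem \ref{mainthm} by a short field-theoretic degree argument, in which the coprimality hypothesis $((p-1)p,[\Qb_f:\Qb])=1$ is used precisely to decouple the cyclotomic factor $\Qb(\mu_p)$ from the Hecke field $\Qb_f$.

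Set $d=[\Qb_f:\Qb]$ and $F=\Qb(|L_f(\chi)|^2)$. By Theorem \ref{mainthm}, for all but finitely many $\chi\in\Xipw$ one has $\Qb_f(\chi)=F(\mu_p)$, so in particular $\Qb_f\subseteq F(\mu_p)$, and the task reduces to showing $\Qb_f\subseteq F$. I would verify this element by element. Fix $\alpha\in\Qb_f$; the minimal polynomial of $\alpha$ over $F$ divides its minimal polynomial over $\Qb$, so $[F(\alpha):F]$ divides $[\Qb(\alpha):\Qb]$, which in turn divides $d$. Simultaneously, from $\alpha\in F(\mu_p)$ one has $F(\alpha)\subseteq F(\mu_p)$, so $[F(\alpha):F]$ divides $[F(\mu_p):F]$, which divides $[\Qb(\mu_p):\Qb]=p-1$.

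Combining the two divisibilities, $[F(\alpha):F]$ divides $\gcd(d,p-1)$, and by the coprimality hypothesis this gcd equals $1$. Hence $\alpha\in F$; letting $\alpha$ range over $\Qb_f$ yields $\Qb_f\subseteq F$, which is the claim. No substantive obstacle arises beyond Theorem \ref{mainthm} itself: the coprimality condition $((p-1)p,[\Qb_f:\Qb])=1$ is calibrated exactly so that the $\mu_p$-adjunction in the identity $\Qb_f(\chi)=F(\mu_p)$ contributes nothing to $\Qb_f$, reducing the Corollary to a tower-degree computation.
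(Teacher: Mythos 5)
Your overall strategy---use Theorem~\ref{mainthm} to reduce to a degree argument and then invoke the coprimality hypothesis---is the same as the paper's, but there is a genuine gap in the middle step. You assert that because the minimal polynomial of $\alpha$ over $F$ divides its minimal polynomial over $\Qb$, the degree $[F(\alpha):F]$ divides $[\Qb(\alpha):\Qb]$. That inference is false: divisibility of polynomials yields only the inequality $[F(\alpha):F]\le[\Qb(\alpha):\Qb]$, not divisibility of their degrees. For a concrete counterexample, take $\alpha=2^{1/3}$ and $F=\Qb(\omega\,2^{1/3})$ with $\omega$ a primitive cube root of unity; then $X^3-2$ factors over $F$ as a linear factor times an irreducible quadratic having $\alpha$ as a root, so $[F(\alpha):F]=2$ while $[\Qb(\alpha):\Qb]=3$. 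The weaker bound $[F(\alpha):F]\le d$ does not save the argument: combined with $[F(\alpha):F]\mid p-1$, the coprimality $\gcd(d,p-1)=1$ does not force $[F(\alpha):F]=1$ (e.g.\ $d=5$, $p-1=6$, $[F(\alpha):F]=2$ is compatible with both constraints). The divisibility $[F(\alpha):F]\mid d$ that you need would follow automatically if $\Qb_f/\Qb$ (or $\Qb(\alpha)/\Qb$) were Galois, but that is not assumed anywhere.

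The paper closes exactly this gap by replacing the element-wise degree count with a group-theoretic one. Writing $E=\Qb_f(\chi)$, it sets $G_1=\Gal(E/\Qb(|L_f(\chi)|^2))$ of order $d_L\mid p-1$ and $G_2=\Gal(E/\Qb_f)$ of order $(p-1)p^{h-2}$, regards both as subgroups of $\mathrm{Aut}(E/\Qb)$, and argues that the product $G_1G_2$ is a subgroup, so that its order $|G_1|\,|G_2|/|G_1\cap G_2|$ divides $[E:\Qb]=d_f(p-1)p^{h-2}$. From $(d_L,d_f)=1$ it then deduces $|G_1\cap G_2|=d_L=|G_1|$, i.e.\ $G_1\subseteq G_2$, hence $\Qb_f=E^{G_2}\subseteq E^{G_1}=\Qb(|L_f(\chi)|^2)$. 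Your proof needs a step of this kind---producing a genuine divisibility relation against $d_f$ rather than a mere inequality---before the coprimality hypothesis can do its job.
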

We refer to the proof of Corollary \ref{Qf corollary} for details.  

\begin{rem}\label{remfnotg} Another possible variant is the following: given   two holomorphic Hecke eigenforms $f$ and $g$ of respective levels $R$ and $R'$ and trivial central character $p$ a prime not dividing $RR'$, what can be said of the set of $\chi\in \Xi^w_p$ for which 
		\begin{align*}
		\Qb_{f,g}(\chi)=\Qq(\mu_p,L_{f}(\chi)\cdot \overline{L_{g}(\chi)})\ \ \  (\hbox{here }\Qb_{f,g}=\Qb_{f}.\Qb_{g})? 
		\end{align*}
		 We hope to come back to the question in a later work.
	
\end{rem}

\subsection{About the proof} 
The present work is inspired by Sun's proof of Theorem~\ref{sun} 
which reduces the generation properties of Hecke fields to the non-vanishing of certain moments of the $L$-values $L_{f}(\chi)$ as $\chi$ varies in a {\em Galois orbit}. These moments are evaluated and proven to not vanish using analytic methods. 
The present paper was guided by the question of what happens if $L_{f}(\chi)$  is replaced with   $|L_{f}(\chi)|^2$.
 

To prove Theorem \ref{mainthm}, it suffices to establish the inclusions
\begin{align}
	\mathbb{Q}(\chi)\subseteq \Qq(\mu_p,|L_f(\chi)|^2)\label{firstinclusion}
\end{align}
(here $\mathbb{Q}(\chi):=\Qq(\chi(n),\ n\geq 1)$ denotes the cyclotomic field generated by the image of $\chi$) and
\begin{align}
\Qb_{f}\subseteq \mathbb{Q}(|L_f(\chi)|^2,\chi)\label{secondinclusion}
\end{align}
for almost all $\chi\in \Xipw$. 

  We approach the first  inclusion (\ref{firstinclusion}) by establishing the non-vanishing of some Galois average of   $|L_{f}(\chi)|^2$. Namely, we consider the following twisted averages 
\begin{align}
	\frac{1}{[\Qf(\chi):F_0]}\Tr_{\Qf(\chi)/F_0}(\chi(\ell)|L_f(\chi)|^2)\label{average in sketch}
\end{align}
for a subfield $F_0\subset \Qf$ and integers $\ell$ such that $\chi(\ell)$ is a primitive root of unity of order $p^{h-1}$. 

We show that there exists such an $\ell$, possibly dependent on $F_0$, such that \eqref{average in sketch} is non-zero as long as $\chi\in \Xipw$ has sufficiently large conductor.

Assume now that the first inclusion (\ref{firstinclusion}) does not hold for infinitely many $\chi\in \Xipw$. Set $K_{\chi}=\Qq(\mu_p,|L_f(\chi)|^2)$.
 Since $\mu_{p}\subseteq K_{\chi}$, we see that whenever $K_{\chi}\cap \mathbb{Q}(\chi)\neq \mathbb{Q}(\chi)$ we have 
 $$\Tr_{\mathbb{Q}(\chi)/K_{\chi}\cap \mathbb{Q}(\chi)}(\chi(\ell))=0=\Tr_{\Qf(\chi)/K_{\chi}}(\chi(\ell))$$
  (see also Remark \ref{orth remark}). By the transitivity of the trace $$\Tr_{\Qf(\chi)/\mathbb{Q}}=\Tr_{K_{\chi}/\mathbb{Q}}\circ\Tr_{\Qf(\chi)/K_{\chi}},$$
  we have
  $$\Tr_{\Qf(\chi)/\mathbb{Q}}(\chi(\ell)|L_f(\chi)|^2)=
  \Tr_{K_{\chi}/\mathbb{Q}}(|L_f(\chi)|^2\Tr_{\Qf(\chi)/K_{\chi}}(\chi(\ell)))=0$$ 
 for infinitely many $\chi\in \Xipw$, which is a contradiction.

The second inclusion (\ref{secondinclusion}) is also based on the non-vanishing of (\ref{average in sketch}), but now when  $F_0=\Qf$. 
It leads to a characterisation of $f$ in terms of the $L$-values: 
 for infinitely many $\chi$ the absolute values $|L_f(\chi)|^2$ determine the underlying newform $f$ (see Theorem \ref{determination L}). By applying Shimura reciprocity (\ref{shim reci}) and Theorem \ref{determination L} to the value $|L_f(\chi)|^2$, we conclude that $\Qf$ is fixed by  $ \text{Gal}(\overline{\mathbb{Q}}/\mathbb{Q}(|L_f(\chi)|^2,\chi))$. 

\medskip

Now we outline how the analysis of the second moment \eqref{average in sketch} proceeds. 
 
The starting point is the work of Mili\'cevi\'c and the first named author \cite{blomer2015second} who evaluated the second moment of twisted $L$-functions of the shape
$$\frac{1}{\vphi^*(q)}\sump{\chi\mods q}\chi(\ell)|L(1/2,f\otimes\chi)|^2$$
 (here "$*$" indicates summation  along primitive characters) for a suitable composite modulus $q$   and $\ell\geq 1$ is not too big compared to $q$ (see also \cite{BFKMM,KMS,blomer2015second} for the prime modulus case). 

When $q=p^k$ is a power of $p$, the approximate functional equation for the product 
 $|L(1/2,f\otimes\chi)|^2$ and summation over $\chi$ leads to evaluating sums of the shape
\begin{align}\label{introcongruencesums}
	\sum_{\substack{mn\ll p^{2h}\\ \ell m\equiv n (\text{mod }p^h)}}\frac{\lambda_{f}(m)\lambda_{f}(n)}{(mn)^{1/2}} 
\end{align}
for $h\leq k$ and where $\lambda_{f}(n)$ denotes the normalized $n$-th Hecke eigenvalue, i.e. $$\lambda_{f}(m)=a_f(m)/m^{(2\kappa-1)/2},$$ and $2\kappa$ is the weight of $f$. In this sum, the ``diagonal" contribution of the pairs $(m,n)$ satisfying $\ell m= n$ provides a main term which is a linear combination of products of the eigenvalues $\lf$ at prime powers dividing $\ell$ and which for $\ell=1$ has size \begin{equation}\label{introdiagonal}
	\gg_{f} \log q
\end{equation} by  the analytic properties of the Rankin-Selberg $L$-function $L(s,f\otimes f)$ near $s=1$. The remaining ``off-diagonal" terms constitute error terms. This comes from solving an unbalanced shifted convolution problem using spectral analysis on automorphic forms and large sieve type inequalities for Kloosterman sums.

When adjusting the above ideas into the realm of Hecke field generation, we have now to average over the {\em Galois conjugates} of $\chi$, instead of the primitive  characters with the same modulus $q$. For our application, $q=p^k$ is a prime power and we have to restrict the sum above to  the set $\Xipw(k)$ of (wild) characters of conductor $p^k$, i.e.\ having order $p^{k-1}$. This leads us into evaluating  ``restricted" moments of the shape
$$\frac{1}{\vphi^*(p^k)}\underset{\chi\mods {p^k}}{\left.\sum\right.^{\ast}}\chi^{p-1}(\ell)|L(1/2,f\otimes\chi^{p-1})|^2$$
where $*$ indicates an averaging over primitive characters. 
 Although $p$ is fixed and $k$ is growing, handling this kind of restriction presents significant analytic challenges, similar to the ones encountered (and resolved) in the work of Vatsal, Cornut and Cornut-Vatsal in the anticyclotomic setting  \cite{Vat,Cornut, CVDocumenta}. Via the approximate functional equation, we are reduced  to evaluate linear combinations of sums of the shape (we assume  $\ell=1$ for simplicity)
\begin{align*}
	\sum_{\substack{mn\ll p^{2k}\\m^{p-1}\equiv n^{p-1} (\text{mod }p^{h-h_0})}}\frac{\lambda_{f}(m)\lambda_{f}(n)}{(mn)^{1/2}}
\end{align*}
where $h_0$ is the largest non-negative integer such that $	\mathbb{Q}(\mu_{p^{h_0}})\subseteq \mathbb{Q}_f(\mu_p)$ and $h_0\leq h\leq k$. Assume (as this is the main case) that $h_0=1$ and $h=k$. By Hensel's Lemma ($p-1$ is coprime with $p$) the above sum decomposes into $p-1$ sums
\begin{align}\label{introcongruencesums2}
	\sum_{\substack{mn\ll p^{2k}\\\zeta m\equiv n (\text{mod }p^{k})}}\frac{\lambda_{f}(m)\lambda_{f}(n)}{(mn)^{1/2}}.
\end{align}
where $\zeta$ varies over the $(p-1)$-st roots of unity modulo $p^{k}$. The case $\zeta=1$ provides a main term satisfying \eqref{introdiagonal} and the other are expected to be error terms.

Let $\ell\in [1,p^k]$ be such that $\ell\equiv \zeta\mods{p^k}$ (a representative of the congruence class $\zeta$), then the sum \eqref{introcongruencesums2} is identical to  \eqref{introcongruencesums}. Unfortunately, the size of $\ell$ can be quite large (if $\zeta\not=\pm 1$ one has at least $| \ell | \geq p^{k/(p-1)}$, and in fact $\ell$ can be much larger) and  methods from the spectral theory of automorphic forms may not always apply.

Instead, we use a technique that was developed by the first and third named authors in \cite{blomer2023unipotent} to resolve the unipotent mixing conjecture for $q$ a prime and which was recently extended by Assing to general moduli \cite{Assing}. This technique makes crucial use of the multiplicativity of the Hecke eigenvalues $(\lambda_f(n))_{n\geq 1}$, Deligne's bound (coming from the holomorphy of $f$) and the fact that these Hecke eigenvalues satisfy the Sato--Tate law \cite{clozel2008automorphy}, \cite{taylor2008automorphy}, \cite{harris2010family} and \cite{newton2021symmetric}  (in fact, 
holomorphy of the first few symmetric power $L$-functions would be sufficient). The outcome is that whenever $\zeta\not=\pm 1$, the sums \eqref{introcongruencesums2} are bounded by
\begin{equation}
	\label{nondiagintro}
	\ll_{f}(\log q)^{1-\delta}
\end{equation} for some absolute $\delta\in (0,1)$. Comparing with \eqref{introdiagonal} we obtain an asymptotic formula and the non-vanishing for the Galois average.

 The evaluation of the main term of \eqref{average in sketch} when $F_0=\Qf$ is significantly different and combinatorially more involved than the approach of  Sun in \cite{sun2019generation}. In the present situation, the linear independence of the eigenforms in the Galois orbit of $f$ appearing in \eqref{average in sketch} is not applicable to establish the non-vanishing of our second moment. This is because in the main term of \eqref{average in sketch}, the Hecke eigenvalues of $f$ and their Galois conjugates appear (Dirichlet) polynomially intertwined (see Section \ref{local}). To get around this, we evaluate \eqref{average in sketch} with twisting factors $\chi(\ell^t)$ for $\ell$'s varying over a suitable set of primes of density $1/p$ (cf. Lemma \ref{v lemma}) and $t\geq 1$ some additional parameter bounded independently of $h$. To exhibit some non-vanishing in the resulting main terms, we proceed by contradiction and assume that all these main terms are vanishing. We view these as a recurrence sequence in $t$ and solving the recurrence relation, we obtain for each such $\ell$ an homogeneous linear system associated to a Vandermonde-like matrix admitting a nontrivial solution. Up to restricting further to a subset of primes $\ell$ with positive density, the vanishing of that determinant implies that two forms in the Galois orbit have matching Hecke eigenvalues along that subset of primes; by the work of Rajan \cite{Rajan}, these forms are quadratic twists of one another (cf.\ Theorem \ref{determination L}) contradicting the condition \ref{non-quad}  and therefore proving the non-vanishing of \eqref{average in sketch} (cf. Prop. \ref{nonvan}). 
 
\begin{rem} For the extension discussed  in Remark \ref{remfnotg} (for $f\not=g$), the present approach does not (yet) suffice: the main term appearing in \eqref{introdiagonal} satisfies only the weaker lower bound $\gg_{f,g}1$ and we would  need $\delta$ to be $>1$ in \eqref{nondiagintro} to compensate for its smaller size.  This case perhaps requires a key new idea. 
\end{rem}	
 
 \begin{rem} The methods of this paper extend {\em mutatis mutandis} to $p$-power characters of the shape $\varpi\chi$ where $\varpi$ is a {\em tame} character (i.e.\ of conductor $p$) and $\chi$ is varying over the wild characters: the same methods show that
 \end{rem}
 \begin{thm} With notations and assumptions as in Theorem \ref{mainthm}, let $\varpi\in \Xipt$ be a tame character. For all but finitely many $\chi\in \Xipw$, one has
\begin{align*}
	\Qb_{f}(\varpi\chi) = \Qq(\mu_p, |L_{f}(\varpi\chi)|^2).
\end{align*} 	
 \end{thm}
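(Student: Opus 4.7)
The plan is to adapt the proof of Theorem \ref{mainthm} with only cosmetic modifications, exploiting that $\varpi$ behaves throughout as a bounded multiplicative weight of tame conductor $p$. Set $\psi = \varpi\chi$. Since $\chi \in \Xipw$ has conductor $p^k$ for $k$ large and $\varpi$ has conductor $p$, the product $\psi$ has conductor $p^k$, so $\Qb(\mu_p) \subseteq \Qb(\chi) \subseteq \Qb(\psi) \subseteq \Qb_f(\psi)$. As in the original proof it suffices to establish
\begin{align*}
\Qb(\psi) \subseteq \Qb(\mu_p, |L_f(\psi)|^2) \quad\text{and}\quad \Qb_f \subseteq \Qb(\mu_p, |L_f(\psi)|^2),
\end{align*}
and by the transitivity-of-trace argument of the introduction both inclusions reduce to the non-vanishing of the Galois averages
\begin{align*}
M(\psi;F_0,\ell,t) := \frac{1}{[\Qb_f(\psi):F_0]}\,\Tr_{\Qb_f(\psi)/F_0}\bigl(\psi(\ell^t)\,|L_f(\psi)|^2\bigr)
\end{align*}
for a suitable prime $\ell \in 1+p\Zb$ with $\ell\nmid R$, exponent $t\geq 1$, and subfield $F_0 \subseteq \Qb_f$.

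Because $\Qb(\varpi) \subseteq \Qb(\mu_{p-1})$ is linearly disjoint from the $p$-part of $\Qb(\chi)$, and $\varpi$ is fixed by $\Gal(\overline{\Qb}/\Qb(\mu_{p-1}))$, the Galois orbit sum in $M(\psi;F_0,\ell,t)$ acts on $\chi$ alone, exactly as in the proof of Theorem \ref{mainthm}. Inserting the approximate functional equation for $|L(1/2,f\otimes\psi)|^2$ and executing the orbit sum expresses $M(\psi;F_0,\ell,t)$ as a linear combination, over $(p-1)$-th roots of unity $\zeta$ modulo $p^k$, of twisted congruence sums of the shape
\begin{align*}
\sum_{\substack{mn \leq p^{2k}\\ \ell^t \zeta m \equiv n \mods{p^k}}} \frac{\lambda_f(m)\lambda_f(n)\,\varpi(m)\overline{\varpi(n)}}{(mn)^{1/2}},
\end{align*}
exactly paralleling \eqref{introcongruencesums2}. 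The additional unimodular factor $\varpi(m)\overline{\varpi(n)}$ is a completely multiplicative weight of tame conductor $p$ and is absorbed into every step of the analysis: Deligne's bound, the Sato--Tate law, and the sieve technique of \cite{blomer2023unipotent, Assing} still yield the off-diagonal bound \eqref{nondiagintro} for $\zeta \neq \pm 1$.

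The main term, from $\zeta=\pm 1$, is supported on $(mn,p)=1$, where $\varpi(m)\overline{\varpi(n)}$ evaluates at a unit; hence the diagonal contribution at $\ell=1$, $t=0$ still satisfies the Rankin--Selberg lower bound $\gg_f \log q$ exactly as in \eqref{introdiagonal}, and combined with the trace-transitivity contradiction this yields the first inclusion. For the second inclusion (where $F_0=\Qb_f$), one once more interprets the main-term coefficients as a recurrence sequence in $t$ and appeals to Ramakrishnan's theorem from \cite{duke2000problem}: the insertion of $\varpi(\ell^t)$ is only a unit scalar in each matrix entry and preserves the Vandermonde structure whose non-degeneracy is guaranteed by \ref{non-quad} for $f$. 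The principal technical point, and the main obstacle, is verifying that this linear-algebraic step is genuinely insensitive to the tame rotation by $\varpi$; this holds because $\varpi$ factors through the quotient $(\Zb/p\Zb)^\times$, which is linearly disjoint from the $p$-part governing the Vandermonde matrix.
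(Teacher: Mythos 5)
Your overall plan is the right one and is exactly what the paper means by \emph{mutatis mutandis}: insert $\psi=\varpi\chi$ into the approximate functional equation, observe that the extra weight $\varpi(m)\overline{\varpi(n)}$ is completely multiplicative and unimodular, and check that every analytic estimate (sieve plus Sato--Tate for $\xi\neq\pm1$, bilinear Kloosterman/Voronoi for $\xi=\pm1$) survives because it only uses $|\varpi|\leq 1$ or can absorb $\varpi$ as a harmonic of conductor $p\mid q$. The reduction to the non-vanishing of the twisted Galois averages, via transitivity of trace and orthogonality over $\Gal(F(\psi)/F)$ with $F\supseteq\Qq(\varpi)$, is also correct.

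Where your write-up is imprecise, and in places misleading, is the final paragraph about the main term and the Vandermonde argument. You say the insertion of $\varpi(\ell^t)$ is ``only a unit scalar in each matrix entry'' that ``preserves the Vandermonde structure'' by ``linear disjointness''. This is not quite the right justification and, taken at face value, would not suffice: a $\tau$-dependent unit $\varpi^{\tau}(\ell)$ multiplying the Langlands parameters would replace $(\alpha_{f^\tau}(\ell),\beta_{f^\tau}(\ell))$ by $(\varpi^\tau(\ell)\alpha_{f^\tau}(\ell),\varpi^\tau(\ell)\beta_{f^\tau}(\ell))$, whose product is $\varpi^\tau(\ell)^2\neq 1$ in general, so Proposition~\ref{quadtwist} and the identity $\alpha\beta=1$ used in its proof would no longer apply directly, and the conclusion would only give $\lambda_{f^\tau}(\ell)=\eta(\ell)\alpha_{f^{\tau'}}(\ell)+\overline{\eta}(\ell)\beta_{f^{\tau'}}(\ell)$ for some character $\eta$ of conductor dividing $p$ which is not obviously quadratic. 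The clean resolution is much simpler and you should state it explicitly: the auxiliary primes are taken from $\mcL_f\subseteq 1+p\Zz$, and $\varpi$ has conductor $p$, so $\varpi(\ell)=1$ and hence $\varpi^\tau(\ell)=1$ for every $\tau$. Consequently the diagonal contribution carries \emph{no} extra factor at all ($\varpi(m\overline{n})=\varpi(l_2\overline{l_1})=\overline{\varpi(\ell)}^t=1$ on $l_1m=l_2n$ with $l_1=\ell^t$, $l_2=1$), the main term $MT(f^\tau,f^\tau;q,\ell^t,1)$ is literally identical to the untwisted case, and the Vandermonde/Ramakrishnan step is applied verbatim. The same remark shows $\psi(\ell^t)=\chi(\ell^t)$, so the orthogonality relation \eqref{orth} used in the proof of the first inclusion is unchanged. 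With this correction, the proposal is essentially the argument the paper leaves to the reader.
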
 
 Varying $\varpi$ over the $p-1$ tame characters we obtain:
\begin{cor} Let the notations and assumptions be as in Theorem \ref{mainthm}. For all but finitely many Dirichlet characters, $\psi$ of $p$-power conductor, one has 
\begin{align*}
	\Qb_{f}(\psi) = \Qq(\mu_p, |L_{f}(\psi)|^2).
\end{align*}	
\end{cor}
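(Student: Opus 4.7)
The plan is to derive this corollary mechanically from the immediately preceding theorem by decomposing an arbitrary character of $p$-power conductor into its tame and wild components. Concretely, for any Dirichlet character $\psi$ of $p$-power conductor $p^k$ (with $k\ge 1$), the Chinese-remainder-style isomorphism
\[
	(\Zz/p^k\Zz)^\times \;\cong\; \mu_{p-1} \;\times\; (1+p\Zz/p^k\Zz),
\]
in which the first factor has order $p-1$ and the second is cyclic of $p$-power order $p^{k-1}$, yields a unique factorisation $\psi = \varpi\chi$ with $\varpi\in\Xipt$ tame (its conductor divides $p$, equivalently its order divides $p-1$) and $\chi\in\Xipw$ wild. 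Note that trivially $\Qb_f(\psi)=\Qb_f(\varpi\chi)$ and $|L_f(\psi)|^2=|L_f(\varpi\chi)|^2$, so the two equalities one has to compare are the same object viewed through different parametrisations.

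Next, I would apply the preceding theorem to each of the finitely many tame characters $\varpi\in\Xipt$. For each such $\varpi$ it provides a finite exceptional set $\mathcal{E}(\varpi)\subset\Xipw$ outside of which
\[
	\Qb_f(\varpi\chi)\;=\;\Qq(\mu_p,\,|L_f(\varpi\chi)|^2).
\]
Since there are only $p-1$ tame characters, the union $\bigcup_{\varpi\in\Xipt}\mathcal{E}(\varpi)$ is finite. Consequently the set of exceptional $\psi$'s, namely those for which the corresponding wild part $\chi$ lies in $\mathcal{E}(\varpi)$ for the corresponding tame part $\varpi$ (together with the $p-1$ purely tame characters themselves, i.e. the case where $\chi$ is trivial, a finite contribution), is finite.

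There is no real obstacle here: the corollary is a straightforward pigeonhole over the (finite) group of tame characters, and all the analytic content has already been absorbed into the preceding theorem, whose proof is the same \emph{mutatis mutandis} as that of Theorem~\ref{mainthm}. The only minor bookkeeping one should double-check is that for a fixed tame $\varpi$, the wild parts $\chi$ of $\psi=\varpi\chi$ range over \emph{all} of $\Xipw$ as $\psi$ varies over $p$-power conductor characters with tame part $\varpi$; this is immediate from the uniqueness of the decomposition.
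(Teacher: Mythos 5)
Your proof is correct and is exactly the argument the paper intends: the paper's entire justification for this corollary is the phrase ``Varying $\varpi$ over the $p-1$ tame characters we obtain,'' which is precisely your decomposition $\psi=\varpi\chi$ into tame and wild parts, followed by a finite union of finite exceptional sets. The bookkeeping you flag (that the wild parts exhaust $\Xipw$ for each fixed tame part, and that the finitely many characters with trivial wild part can be absorbed into the exceptional set) is right and is the only thing there is to check.
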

To avoid adding the extra variable $\varpi$ to our notations we only write-up the case $\varpi=1$ and leave the general case to the interested reader. Even though the statement of this corollary involves all primitive characters of conductor $p^h$ for $h$ sufficiently large (and not only the wild characters), our reduction to the non-vanishing of suitable traces requires crucially the analysis of \emph{single Galois orbits}  which -- as described above -- is the source for significant analytic challenges.


\subsection{Outline of the paper}
	Section \ref{preliminary section} presents some preliminaries. We state useful facts such as approximate functional equation for a product of $L$-values and Galois average of Dirichlet characters.
	Section \ref{determination section} gives a determination result of newforms when given a series of linear equations involving Hecke eigenvalues of corresponding forms. This will be a key ingredient in Section \ref{generation of hecke field section} to show the non-vanishing of the Galois average. 
	In Section \ref{second moment section} and \ref{f=g error term section}, we prove the asymptotic estimates for the second twisted moment over a Galois orbit. In particular, we express the main terms in terms of the recurrence relation that arises from the multiplicativity of Hecke eigenvalues. 
The analytic techniques are modified adaptations of the works of the first named author and Mili\'cevi\'c \cite{blomer2015second} and of the first and third named authors \cite{blomer2023unipotent}.
	In Section \ref{generation of hecke field section}, we state our main results.  
\subsection*{Acknowledgements} Parts of this collaboration started during the AIM workshop ``Analytic, arithmetic, and geometric aspects of automorphic forms" that took place on the Caltech campus in January 2024 and which highlighted several of Dinakar Ramakrishnan's works, including the ones we are using here. We are grateful to the AIM and the department of Mathematics at Caltech for their hospitality and for providing an environment conductive to fruitful collaborations. We are also very grateful to Dinakar Ramakrishnan and Hae-Sang Sun for insightful discussions. Last but not least, special thanks are due to the anonymous referee for an extremely detailed reading of a first version of this manuscript; by catching   typos, asking for clarifications and by  identifying more serious errors, the referee has contributed to  improve the correctness and readability of this paper.

	\subsection*{Notations and Definitions}
\begin{itemize}
\item We denote by $\ov\Qq\subset \Cc$ an algebraic closure of $\Qq$ so that for any number field $K\subset \ov\Qq$ we have $\Hom_K(\ov\Qq,\Cc)=\Gal(\ov\Qq/K).$
\item Given $n\geq 1$ and $K$ a field we denote by $\mu_n(K)\subset K^\times$ the group of $n$-th roots of unity in $K$. When $K=\ov\Qq$ we will simply write $\mu_n$. 
	
	\item We use the notation $e(z)=\exp(2\pi iz)$.
	\item We denote by $d(n)=\sum_{d|n}1$ the number of divisors of an integer $n\geq 1$.

\item For an integer $r\geq 1$  and   an $L$-function $L(s)$ with  Euler product $L(s)=\prod_p L_p(s)$, we set $$L_r(s)=\prod_{p|r}L_p(s)\hbox{ and }L^{(r)}(s)=L(s)/L_r(s).$$

	\item Given an odd prime $p$  we choose an embedding of $\Qp\hookrightarrow \Cc$. We denote by $\Xip$ the group of Dirichlet characters of $p$-power conductor. This group is identified with the group of characters of $\Zpt$. From the decomposition
	\begin{equation}\label{decom}
	   \Zpt\subset \mu_{p-1}\times (1+p\Zp),
	   \end{equation} the group $\Xip$ decomposes as the product of the group of tame characters $\Xipt$ (of order dividing $p-1$ or equivalently of conductor $\mathfrak{f}(\chi)=1$ or $p$) and the group $\Xipw\subset \Xip$  of wild characters (of $p$-power order  or equivalently of conductor $\mathfrak{f}(\chi)=p^h$ for some $h\geq 2$). Conversely, for $h\geq 2$ denote by $\Xipw(h)$ the set of wild characters of conductor $p^h$. The set $\Xipw(h)$ forms a single Galois orbit and for any $\chi\in \Xipw(h)$ we have $\mathrm{Im}(\chi)=\mu_{p^{h-1}}$.

	\item We denote by $S^{\mathrm{new}}_{2\kappa}(R)$ the (finite) set of primitive newforms of weight $2\kappa$, level $R\geq 1$ and trivial central character. The $n$-th Fourier coefficient of such a newform $f\in S^{\mathrm{new}}_{2\kappa}(R)$ is noted $a_f(n)$ (with $a_f(1)=1$),  and its $n$-th (normalized) Hecke eigenvalue  is denoted 
	$$\lambda_{f}(n):=a_f(n)/n^{(2\kappa-1)/2},$$
	ie. the Fourier expansion of $f$ is
	$$f(z)=\sum_{n\geq 1}a_f(n)e(nz)=\sum_{n\geq 1}\lambda_f(n)n^{\frac{2\kappa-1}2}e(nz).$$
	This normalization for the Hecke eigenvalues leads to the functional equations for the $L(s,f\otimes \chi)$ mentioned in the beginning of this introduction and to the approximate functional equations in Proposition \ref{functionalequation}.

	\item For $\sigma \in \Hom_\mathbb{Q}(\overline{\mathbb{Q}},\mathbb{C})$  and $f\in S^{\mathrm{new}}_{2\kappa}(R)$ we denote by $f^\sigma$ the newform  whose Fourier expansion is
	 $$f^\sigma(z)=\sum_{n\geq 1}a_f(n)^\sigma e(nz)=\sum_{n\geq 1}\lambda_{f^\sigma}(n)n^{(2\kappa-1)/2} e(nz),$$ where $a_f(n)^\sigma=\sigma(a_{f}(n)).$
	
\end{itemize}

\section{Preliminaries}\label{preliminary section}

In this section, we compile a few facts  frequently used in the subsequent discussions.

\subsection{Basic properties of Hecke eigenvalues} 
Given $f\in S^{\mathrm{new}}_{2\kappa}(R)$, its  Hecke eigenvalues are multiplicative arithmetic functions satisfying the Hecke relations
\begin{equation}\label{mult2}
\begin{split}
	\lambda_f(m)\lambda_f(n)=\sum_{d|(m,n)}\chi_0(d)\lambda_{f}\bigg(\frac{mn}{d^2}\bigg),\\
	\lambda_{f}(mn)=\sum_{d|(m,n)}\chi_0(d)\mu(d)\lambda_f\bigg(\frac{m}{d}\bigg)\lambda_f\bigg(\frac{n}{d}\bigg)
	\end{split}
	\end{equation}
where $\chi_0$ is the trivial character modulo $R$. In particular, for $\ell$   prime and $t\geq 2$, one has 
\begin{align}\label{heckecyclotomic}
	\lambda_f(\ell^t)=\lambda_f(\ell)\lambda_f(\ell^{t-1})-\chi_0(\ell)\lambda_f(\ell^{t-2}).
\end{align}
or equivalently
$$\lf(\ell^t)=\sum_{r=0}^t\alpha^{r}_f(\ell)\beta^{t-r}_f(\ell)$$
where $\{\alpha_f(\ell),\beta_f(\ell)\}$ denote the {\em Langlands parameter} of $f$ at $\ell$, ie. the roots (possibly with multiplicity) of the quadratic polynomial $X^2-\lf(\ell)X+\chi_0(\ell)$.

By the work of Deligne \cite{WeilI} we have the Ramanujan-Petersson bound
$$|\alpha_f(\ell)|,|\beta_f(\ell)|\leq 1$$ so that
$$|\lf(\ell^t)|\leq t+1,\ |\lf(n)|\leq d(n)$$
	where $d(\bullet)$ is the divisor function. 

As mentioned in the introduction, the Fourier coefficients $(a_f(n))_{n\geq 1}$ are totally real algebraic integers and one has (see \cite[Corollary 12.4.5]{diamond1995modular}):
\begin{prop}
	Let $f(z)=\sum_{n}a_f(n)e(nz)\in S^{\mathrm{new}}_{2\kappa}(R)$ be a (normalized) Hecke newform. Then  $$\Zz_f:=\Zz[a_f(n),\ n\geq 1]$$
	is an order in a totally real number field $\Qq_f$. Moreover for any $\sigma\in \mathrm{Gal}(\overline{\mathbb{Q}}/\mathbb{Q})$, the holomorphic function $$f^\sigma(z)=\sum_{n\geq 1}a_f(n)^\sigma e(nz)$$
	 is also a normalised newform  contained in $S^{\mathrm{new}}_{2\kappa}(R)$.
\end{prop}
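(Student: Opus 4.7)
The plan is to prove the two assertions of the proposition, (i) $\Zz_f$ is an order in a totally real number field $\Qq_f$, and (ii) $f^\sigma$ is a normalized newform in $S^{\mathrm{new}}_{2\kappa}(R)$, by exploiting the integral and rational structures on $S_{2\kappa}(R)$ that are compatible with the Hecke action.

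First I would establish the finiteness of $\Zz_f$. The Hecke algebra $\mathbb{T} := \Zz[T_n : n \geq 1]$ acts on the finite-dimensional space $S_{2\kappa}(R)$ and preserves the $\Zz$-submodule $S_{2\kappa}(R;\Zz)$ of cusp forms with integral $q$-expansion at the cusp $\infty$ (a classical integral structure due to Shimura). Hence $\mathbb{T}$ embeds in $\End_\Zz(S_{2\kappa}(R;\Zz))$ and is a finitely generated $\Zz$-module of rank at most $\dim_\Cc S_{2\kappa}(R)$. The evaluation map $\lambda_f : \mathbb{T} \to \Cc$, $T_n \mapsto a_f(n)$, is a ring homomorphism whose image is $\Zz_f$. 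Being a quotient of the finitely generated $\Zz$-module $\mathbb{T}$, $\Zz_f$ is itself finitely generated, so it is an order in the number field $\Qq_f := \Zz_f \otimes_\Zz \Qq$.

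Next, for the Galois equivariance, I would use the rational structure $S_{2\kappa}(R;\Qq) := S_{2\kappa}(R;\Zz) \otimes \Qq$, which satisfies $S_{2\kappa}(R) = S_{2\kappa}(R;\Qq) \otimes_\Qq \Cc$. The group $\Gal(\ov\Qq/\Qq)$ acts on $S_{2\kappa}(R;\Qq) \otimes_\Qq \ov\Qq$ through the second tensor factor and commutes with the Hecke operators, since the latter have rational matrices on $S_{2\kappa}(R;\Qq)$. Since the Fourier coefficients of $f$ lie in the number field $\Qq_f$, we have $f \in S_{2\kappa}(R;\Qq) \otimes_\Qq \ov\Qq$; applying $\sigma$ yields $f^\sigma = \sum_n \sigma(a_f(n)) e(nz) \in S_{2\kappa}(R)$, a Hecke eigenform with eigenvalues $\sigma(a_f(n))$, and it is normalized since its first Fourier coefficient is $\sigma(1) = 1$. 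The old subspace $S^{\mathrm{old}}_{2\kappa}(R)$ is defined over $\Qq$ (being the image of the degeneracy maps $g(z) \mapsto g(dz)$ from lower-level spaces, which respect the rational structure), hence the new subspace is $\Gal(\ov\Qq/\Qq)$-stable, and $f^\sigma \in S^{\mathrm{new}}_{2\kappa}(R)$.

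Finally, to show $\Qq_f$ is totally real I would proceed via strong multiplicity one. The antiholomorphic conjugate $\bar{f}(z) := \overline{f(-\bar{z})} = \sum_n \overline{a_f(n)} e(nz)$ is again a newform in $S^{\mathrm{new}}_{2\kappa}(R)$ with trivial nebentypus (since $f$ has trivial nebentypus). For each prime $p \nmid R$ the Hecke operator $T_p$ is self-adjoint for the Petersson inner product, so $a_f(p) \in \Rb$; hence $\bar{f}$ and $f$ have matching $T_p$-eigenvalues at all $p \nmid R$, and strong multiplicity one forces $\bar{f} = f$, i.e., every $a_f(n)$ is real. Applying the same argument to each Galois conjugate $f^\sigma$ -- itself a newform in $S^{\mathrm{new}}_{2\kappa}(R)$ with trivial nebentypus by the preceding paragraph -- gives $\sigma(a_f(n)) \in \Rb$ for every $\sigma \in \Gal(\ov\Qq/\Qq)$, so $\Qq_f$ is totally real. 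The only nontrivial input, and the main obstacle, is the existence of Hecke-equivariant integral and rational structures on $S_{2\kappa}(R)$; once these are available both claims of the proposition follow formally.
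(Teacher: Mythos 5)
The paper does not prove this Proposition---it cites \cite[Corollary 12.4.5]{diamond1995modular}---so there is no in-text proof to compare against; you have supplied a genuine argument. Your proof is essentially correct and follows the classical route: the Hecke-stable integral structure $S_{2\kappa}(R;\Zz)$ gives finite generation of $\mathbb{T}$ and hence of its quotient $\Zz_f$; the Hecke operators are $\Qq$-rational, so the Galois action on $S_{2\kappa}(R;\Qq)\otimes_{\Qq}\ov\Qq$ permutes eigenforms and sends $f$ to $f^{\sigma}$; and total reality follows from self-adjointness of $T_p$ for $p\nmid R$ (trivial nebentypus) plus strong multiplicity one, applied to every Galois conjugate.

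Two spots deserve a touch more justification. First, the inference ``$S^{\mathrm{old}}_{2\kappa}(R)$ is defined over $\Qq$, hence $S^{\mathrm{new}}_{2\kappa}(R)$ is $\Gal(\ov\Qq/\Qq)$-stable'' skips a step: rationality of a subspace does not formally hand you rationality of a complement. One standard fix is to note that a normalized eigenform for all $T_n$ which does not lie in the Galois-stable $S^{\mathrm{old}}$ must, by Atkin--Lehner theory, be a newform of exact level $R$; another is to realize $S^{\mathrm{new}}$ directly as the common kernel of the $\Qq$-rational trace maps to lower level. Second, the claim $\bar f\in S^{\mathrm{new}}_{2\kappa}(R)$ is asserted briefly; it is true because the map $g\mapsto\bar g$ commutes with the (integer-matrix) Hecke operators and is a conjugate-linear isometry for the Petersson product stabilizing $S^{\mathrm{old}}$, hence it stabilizes $S^{\mathrm{new}}$. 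With these small clarifications the argument is complete and is in fact the standard proof one would find in the cited reference.
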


\subsection{Approximate functional equation and Voronoi summation}
A natural starting point for estimating moments of modular $L$-values is the approximate functional equation technique: it allows us  to express $L$-values by rapidly convergent sums.  We will use the following version from \cite[Proposition 2.18]{blomer2023second}.


\begin{prop}\label{functionalequation}  Let $f\in S_{2\kappa}^{{\rm new}}(R)$ and $g\in S_{2\kappa'}^{{\rm new}}(R')$ two normalized newforms with normalized Hecke eigenvalues $\lambda_f, \lambda_g$ as above. 
Let $\chi$ be a primitive Dirichlet character modulo $p^h$ with parity $\chi(-1)=\pm 1$ and $p$ coprime to $RR'$. 
Then 
	\begin{align*}
		L(1/2,f\otimes\chi)&\overline{L(1/2,g\otimes\chi)}=\sum_{m,n\geq 1}\frac{\lambda_f(m)\lambda_g(n)}{(mn)^{1/2}}\chi(m\overline{n})W_{f,g,\pm,\frac{1}{2}}\bigg(\frac{mn}{p^{2h}RR'}\bigg)\\
		&+\eps(f,g)\sum_{m,n\geq 1}\frac{\lambda_g(m)\lambda_f(n)}{(mn)^{1/2}}\chi(m\overline{n})W_{f,g,\pm,\frac{1}{2}}\bigg(\frac{mn}{p^{2h}RR'}\bigg),
	\end{align*}
	where $\ov n$ denote the multiplicative inverse of $n$ modulo $p^h$ and
\begin{equation}
	\label{Wdef}
	W_{f,g,\pm,s}(y)=\frac{1}{2\pi i}\int_{(2)}\frac{L_\infty(f,\pm,s+u)}{L_\infty(f,\pm,s)}\frac{L_\infty(g,\pm,\overline{s}+u)}{L_\infty(g,\pm,\overline{s})}e^{u^2}y^{-u}\frac{du}{u}.
\end{equation}
	and 
	\begin{align*}
		\eps(f,g)=\eps(f)\eps(g)\chi(R\overline{R'}).
	\end{align*}
\end{prop}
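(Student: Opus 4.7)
The plan is to derive the identity by the standard Mellin--Barnes contour shift applied to the product of the two twisted $L$-functions as a single analytic object. Since $f$ and $g$ have real Fourier coefficients, $\overline{L(1/2,g\otimes\chi)} = L(1/2,g\otimes\bar\chi)$, so I introduce
$$\Phi(u) := L(\tfrac12+u, f\otimes\chi)\,L(\tfrac12+u, g\otimes\bar\chi),$$
which is entire in $u$ (both factors being cuspidal) and represented by an absolutely convergent double Dirichlet series for $\operatorname{Re}(u)>1/2$. The starting point is the contour integral
$$I := \frac{1}{2\pi i}\int_{(2)}\Phi(u)\,\frac{L_\infty(f,\pm,\tfrac12+u)}{L_\infty(f,\pm,\tfrac12)}\frac{L_\infty(g,\pm,\tfrac12+u)}{L_\infty(g,\pm,\tfrac12)}\, e^{u^2}\,(p^{2h}RR')^{u}\,\frac{du}{u}.$$
On the line $\operatorname{Re}(u)=2$, expanding $\Phi$ and interchanging sum with integral (legitimate by the Gaussian decay of $e^{u^2}$) represents $I$ as the first sum of the proposition, with weight $W_{f,g,\pm,1/2}(mn/(p^{2h}RR'))$.

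Next I would shift the contour to $\operatorname{Re}(u) = -2$, picking up the simple pole at $u=0$ coming from $1/u$, whose residue is $\Phi(0) = L(1/2,f\otimes\chi)\overline{L(1/2,g\otimes\chi)}$---the desired left-hand side. On the shifted contour, apply the individual functional equations
$$L(\tfrac12+u, f\otimes\chi) = \eps(f\otimes\chi)\,(Rp^{2h})^{-u}\,\frac{L_\infty(f,\pm,\tfrac12-u)}{L_\infty(f,\pm,\tfrac12+u)}\,L(\tfrac12-u, f\otimes\bar\chi),$$
and its analogue for $g\otimes\bar\chi$. The two root numbers combine, using $\tau(\chi)\tau(\bar\chi)=\chi(-1)p^h$ and $\chi(-1)^2=1$, into
$$\eps(f\otimes\chi)\,\eps(g\otimes\bar\chi) = \eps(f)\,\eps(g)\,\chi(R)\,\bar\chi(R') = \eps(f,g).$$
The substitution $u\mapsto -u$ next returns the contour to $(2)$, preserves $e^{u^2}$, and exchanges $L_\infty(\cdot,\pm,\tfrac12-u)$ with $L_\infty(\cdot,\pm,\tfrac12+u)$, so the archimedean weight reassembles into the \emph{same} function $W_{f,g,\pm,1/2}$ now applied to the dual series $L(\tfrac12+u, f\otimes\bar\chi)L(\tfrac12+u, g\otimes\chi)$. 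Expanding, relabeling the dummy variables $m\leftrightarrow n$, and using $\bar\chi(n)\chi(m)=\chi(m\bar n)$, produces precisely the second sum of the proposition with $\lambda_g(m)\lambda_f(n)$ and the overall factor $\eps(f,g)$.

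The main technical subtlety lies in the bookkeeping: one must verify that the combined conductor factor $(RR'p^{4h})^{-u}$ produced by the two functional equations, together with the Mellin parameter $(p^{2h}RR')^{u}$ chosen in $I$ and the reflection $u\mapsto -u$, exactly reproduces the common argument $mn/(p^{2h}RR')$ in the weight $W_{f,g,\pm,1/2}$ of the dual sum, and that the sign from $\int_{(-2)}f(u)\,du/u = -\int_{(2)}f(-u)\,du/u$ combines with the sign in the residue-theorem identity $I = \text{Res}_{u=0}\!+\!\int_{(-2)}$ to produce the stated $+\eps(f,g)$ in front of the dual term. Once these signs and conductor factors are tracked, the proposition follows from the residue theorem and Mellin inversion.
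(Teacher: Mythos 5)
The paper does not actually prove this proposition; it is imported wholesale as \cite[Proposition 2.18]{blomer2023second}. Your Mellin--Barnes route (contour integral of $L(\tfrac12+u,f\otimes\chi)L(\tfrac12+u,g\otimes\bar\chi)$ against $e^{u^2}X^u\,du/u$, pick up the residue at $u=0$, apply each functional equation, reflect $u\mapsto -u$) is indeed the standard and correct method for deriving such statements, and your root-number computation $\eps(f\otimes\chi)\eps(g\otimes\bar\chi)=\eps(f)\eps(g)\chi(R)\bar\chi(R')$ via $\tau(\chi)\tau(\bar\chi)=\chi(-1)p^h$ and $\chi(-1)^2=1$ is right. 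The sign coming from the orientation of the shifted contour and the $1/u$ under $u\mapsto -u$ also works out as you assert.

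However, the step you flag as ``the main technical subtlety'' and then assert ``exactly reproduces the common argument $mn/(p^{2h}RR')$'' does not, in fact, close with your stated choices. With the exact conductor $Rp^{2h}$ of $f\otimes\chi$ (and $R'p^{2h}$ of $g\otimes\bar\chi$), the two functional equations produce the factor $(RR'p^{4h})^{-u}$. Multiplying by your Mellin parameter $(p^{2h}RR')^{u}$ leaves $p^{-2hu}$, \emph{not} $(p^{2h}RR')^{-u}$, so after the reflection $u\mapsto -u$ the dual sum carries the weight $W(mn/p^{2h})$, not $W(mn/(p^{2h}RR'))$. To obtain a genuinely symmetric approximate functional equation (identical weight in both sums) one must use $(\sqrt{RR'}\,p^{2h})^u$ in $I$, which yields $W(mn/(\sqrt{RR'}\,p^{2h}))$ in both sums; the form with $RR'p^{2h}$ as in the statement corresponds to working with the enlarged ``conductor'' $(Rp^h)^2$ for $f\otimes\chi$ (an upper bound, not the exact conductor). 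Either variant is fine analytically since $R,R'$ are fixed, but your derivation as written produces a different constant inside $W$ from the one in the statement, and this should be acknowledged rather than claimed to match on the nose. A second, smaller point: shifting all the way to $\operatorname{Re}(u)=-2$ can cross the rightmost pole of $L_\infty(f,\pm,\tfrac12+u)$ (at $u=-\kappa$) when the weight is small; shift instead to $\operatorname{Re}(u)=-\delta$ for a small $\delta>0$, which the Gaussian factor $e^{u^2}$ still permits.
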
Here, $\eps(f)$ is the root number of $f$ and $	L_\infty(f,\pm,s)$ is a product of shifted Gamma functions (see \cite[page 29]{blomer2023second}). 

Next we state the Voronoi summation formula for   holomorphic cusp forms $f$ (cf.\ e.g. \cite[Lemma 2.21, 2.23]{blomer2023second}) in the case of newforms.

\begin{lem}\label{Voronoi} Let $f\in  S_{2\kappa}^{{\rm new}}(R)$, $q \in \Bbb{N}$, $a\in \Bbb{Z}$ with $(aR, q) = 1$. Let $W$ be a smooth weight function with compact support in $(0, \infty)$ and $N > 1$. Then
$$\sum_{n} \lambda_f(n) W\Big(\frac{n}{N}\Big) e\Big(\frac{aq}{n}\Big) = \varepsilon(f) \frac{N}{q R^{1/2}} \sum_{n} \lambda_f(n) e\Big(- \frac{\overline{aR}n}{q}\Big) \widetilde{W}\Big( \frac{Nn}{q^2 R}\Big)$$
where the smooth function $y\mapsto \widetilde{W}(y)$ is a suitable integral transform (depending on $\kappa$) which satisfies 
$$\widetilde{W}(y) \ll _{A, f} (1 + y)^{-A}$$
for any $A > 0$. 
\end{lem}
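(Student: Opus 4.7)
The plan is to derive the formula from the functional equations of twisted $L$-functions $L(s,f\otimes\psi)$, with $\psi$ a Dirichlet character of modulus dividing $q$, in the standard way. I read the left-hand additive twist $e(aq/n)$ as a typographical slip for $e(an/q)$: the hypothesis $(aR,q)=1$, the dual twist $e(-\overline{aR}n/q)$ on the right, and the referenced \cite[Lemma~2.21, 2.23]{blomer2023second} all force this interpretation.

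First, because $W$ is smooth with compact support in $(0,\infty)$, its Mellin transform $\widehat W(s)$ is entire with super-polynomial decay on vertical lines. Mellin inversion yields
$$\sum_n \lambda_f(n)\,e(an/q)\,W(n/N)\;=\;\frac{1}{2\pi i}\int_{(3)} \widehat W(s)\,N^s\,D(s,a/q)\,ds,\quad D(s,a/q):=\sum_n \lambda_f(n)e(an/q)n^{-s}.$$
For $(a,q)=1$, the Gauss-sum expansion $e(an/q) = \varphi(q)^{-1}\sum_{\psi\bmod q}\tau(\bar\psi)\psi(a)\bar\psi(n)$ (valid for $(n,q)=1$), combined with the multiplicativity of $\lambda_f$ and a routine treatment of the $(n,q)>1$ contribution (which, because $(q,R)=1$, factors through characters of smaller conductor and never interferes with the bad Euler factors of $f$), writes $D(s,a/q)$ as a finite linear combination, with Gauss-sum coefficients, of the entire twisted $L$-functions $L(s,f\otimes\psi)$ for $\psi\bmod q$.

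Next, shift the contour from $\Re s=3$ to $\Re s=-2$; no poles are encountered, since each $L(s,f\otimes\psi)$ is entire and $\widehat W$ is entire. Apply the functional equation
$$L(s,f\otimes\psi)=\varepsilon(f,\psi)\,(q^2R)^{1/2-s}\,\gamma_\kappa(s)\,L(1-s,f\otimes\bar\psi),$$
with $\gamma_\kappa$ the archimedean gamma ratio in weight $2\kappa$, then substitute $s\mapsto 1-s$. Reassembling the Gauss sums via $\tau(\psi)\tau(\bar\psi)=\psi(-1)q$ reconstitutes an additive twist whose argument shifts from $\overline a/q$ to $-\overline{aR}/q$, precisely because of the level-$R$ factor $R^{1/2-s}$ in the functional equation. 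The root numbers $\varepsilon(f,\psi)$ together with the remaining Gauss-sum prefactors collapse to the global constant $\varepsilon(f)N/(qR^{1/2})$, giving
$$\varepsilon(f)\frac{N}{qR^{1/2}}\sum_n \lambda_f(n)\,e\!\Bigl(-\frac{\overline{aR}n}{q}\Bigr)\,\widetilde W\!\Bigl(\frac{Nn}{q^2R}\Bigr),\qquad \widetilde W(y):=\frac{1}{2\pi i}\int_{(\sigma)}\widehat W(1-s)\,\gamma_\kappa(s)\,y^{-s}\,ds.$$
The decay bound $\widetilde W(y)\ll_{A,f}(1+y)^{-A}$ follows by shifting $\Re s$ arbitrarily far to the right: Stirling controls $\gamma_\kappa(s)$ by a polynomial in $|s|$, while $\widehat W(1-s)$ decays faster than any polynomial, so the integrand is dominated by $(1+|s|)^{-B}y^{-A}$.

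The main technical obstacle is not any single step but the precise bookkeeping of Gauss sums, root numbers and the level factor $R^{1/2-s}$, so as to produce exactly the argument $-\overline{aR}n/q$ (rather than the naive $-\overline{a}n/q$) and the precise prefactor $\varepsilon(f)N/(qR^{1/2})$. This is the reason the lemma is typically imported from the literature rather than rederived in place, and it is why the authors cite \cite{blomer2023second} directly.
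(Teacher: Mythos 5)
Your reading of $e(aq/n)$ as a typo for $e(an/q)$ is the right one, and the outer frame of your argument (Mellin inversion, contour shift, functional equation, Stirling for the decay of $\widetilde W$) is the standard frame. But the central step — producing the functional equation of the additive twist by expanding $e(an/q)$ into Dirichlet characters mod $q$ — has a genuine gap, and it is exactly where the stated identity lives. The Gauss-sum expansion $e(an/q)=\varphi(q)^{-1}\sum_{\psi}\tau(\bar\psi)\psi(a)\bar\psi(n)$ is false for $(n,q)>1$ (the right side vanishes there), so those terms are not "routine": they must be peeled off with Hecke relations and lead to additive twists to smaller moduli $q/d$, i.e.\ to an induction. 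Worse, among the $\psi \bmod q$ only the primitive ones satisfy the clean functional equation with conductor $q^2R$ that you invoke uniformly; an imprimitive $\psi$ induced from conductor $q^*<q$ (including the trivial character, for which $L(s,f\otimes\psi_0)=L^{(q)}(s,f)$) has a functional equation with conductor $(q^*)^2R$ plus correction Euler factors, and $\tau(\bar\psi)$ need not have modulus $\sqrt q$. After the substitution $s\mapsto 1-s$ these terms produce dual sums at the wrong scale $Nn/((q^*)^2R)$, and the claimed exact identity — with the single argument $-\overline{aR}n/q$, the single prefactor $\varepsilon(f)N/(qR^{1/2})$, and one transform $\widetilde W$ — only emerges if all of these pieces recombine with the $(n,q)>1$ contributions, which your sketch does not show and which is precisely the hard bookkeeping you defer. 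Since the lemma is used in the paper for general composite moduli (in Proposition \ref{bound1} it is applied with modulus $d\mid q$ arbitrary), one cannot retreat to the prime case where these issues are mild.

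For comparison: the paper does not reprove the lemma but imports it from \cite[Lemmas 2.21, 2.23]{blomer2023second}, and the proof there (as in the standard treatment) bypasses Dirichlet characters entirely. One derives the functional equation of the additive-twist Dirichlet series $\sum_n \lambda_f(n)e(an/q)n^{-s}$ directly from the modularity of $f$: for $(aR,q)=1$ one conjugates a translation by the Fricke involution $z\mapsto -1/(Rq^2z)$ inside $\Gamma_0(R)$, which sends the cusp $a/q$ to $-\overline{aR}/q$ and carries the root number $\varepsilon(f)$ and the factor $R^{1/2}$ with it; this is what makes the dual argument $-\overline{aR}n/q$ and the prefactor exact and uniform in $q$. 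Once you have that single functional equation, your Mellin-inversion, contour-shift and Stirling steps go through verbatim and do complete the proof. So the fix is not more care with Gauss sums, but replacing the character decomposition by the additive-twist functional equation coming from modularity.
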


A useful technique in analytic number theory is to express a long sum over integers into smooth localized sums using partitions of unity (see \cite[Lemme 2]{fouvry1985probleme}).
\begin{lem}\label{partition of unity}
	There exists a smooth and non-negative function $V$ with compact support $[1/2,2]$ and derivatives satisfying  $V^{(j)}(x)\ll_j 1$ for any $j\geq 0$ such that
	\begin{align*}
		\sum_{k\geq 0}V\bigg(\frac{x}{2^k}\bigg)=1
	\end{align*}
	for any $x\geq 1$.
\end{lem}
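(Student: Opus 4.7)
The plan is to obtain $V$ as a normalization of a fixed smooth bump against a dyadically invariant denominator, which is the standard route to a smooth partition of unity on the multiplicative group. First I would fix any non-negative $\psi\in C^\infty(0,\infty)$ supported in $[1/2,2]$ and strictly positive on $(1/2,2)$; a concrete choice is the extension by zero of $\exp(-1/((x-1/2)(2-x)))$. All derivatives $\psi^{(j)}$ are then bounded by constants depending only on $j$.

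Next I would form the auxiliary sum
\begin{align*}
S(x) := \sum_{k\in\Zz}\psi(x/2^k),\qquad x>0.
\end{align*}
For each fixed $x$ only two or three consecutive values of $k$ contribute, so $S$ is $C^\infty$ and strictly positive on $(0,\infty)$. The decisive observation is dyadic invariance:
\begin{align*}
S(2x)=\sum_k\psi(x/2^{k-1})=S(x),
\end{align*}
which together with the positivity and smoothness of $S$ allows me to set
\begin{align*}
V(x):=\psi(x)/S(x).
\end{align*}
Then $V$ is smooth, non-negative, supported in $[1/2,2]$, and its derivatives satisfy $V^{(j)}\ll_j 1$ by the quotient rule, since $1/S$ is smooth and log-2 periodic and hence bounded together with all its derivatives. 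Invariance of $S$ gives, for every $x>0$,
\begin{align*}
\sum_{k\in\Zz}V(x/2^k)=\frac{1}{S(x)}\sum_k \psi(x/2^k)=1.
\end{align*}

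It remains to show that for $x\geq 1$ only the terms $k\geq 0$ contribute. Since $V$ is supported in $[1/2,2]$, the term $V(x/2^k)$ vanishes unless $2^{k-1}\leq x\leq 2^{k+1}$; for $k\leq -1$ this forces $x\leq 1$, and at the single borderline point $x=1,\,k=-1$ the value $V(2)=\psi(2)/S(2)$ is zero because $\psi$ vanishes at $2$. Thus $\sum_{k\geq 0}V(x/2^k)=1$ for every $x\geq 1$, which is the required identity.

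The proof is essentially bookkeeping; the only mild point to watch is the behaviour at the joints $x=1/2$ and $x=2$, and this is handled automatically once $\psi$ (and hence $V$) is taken to vanish to infinite order at those boundary points. The non-trivial ingredient is just the use of the $\log 2$-periodicity of $S$, which is what makes a multiplicative partition of unity possible in a scale-free way.
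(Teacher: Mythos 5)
Your proof is correct and is the canonical construction of a dyadic smooth partition of unity: take a fixed bump $\psi$ on $[1/2,2]$, normalize by the $\log 2$-periodic sum $S(x)=\sum_{k\in\Zz}\psi(x/2^k)$, and observe that for $x\ge 1$ only the terms $k\ge 0$ survive. The paper itself gives no proof and simply cites Fouvry's Lemme 2, which uses the same standard construction, so your argument matches the intended one.
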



\subsection{\texorpdfstring{Rankin-Selberg $L$-function}{Rankin-Selberg L-function}}\label{rankinselbersection}

We recall the basic theory of Rankin-Selberg convolution. See \cite[\S 2.3]{blomer2023second} and \cite{kowalski2002rankin} for details. 

We define the ``naive" Rankin-Selberg convolution $L$-function as 
\begin{align*}
	L(s, f\otimes g)= \zeta(2s)\sum_{n\geq 1}\frac{\lambda_f(n)\lambda_{g}(n)}{n^s}=\prod_pL_p(s, f\otimes g)
\end{align*} 
which is absolutely convergent and admits an Euler product factorisation for $\Re e s>1$.
Closely related is the ``canonical" Rankin-Selberg $L$-function attached to the automorphic representations generated by $f$ and $g$ which we denote by
\begin{align*}
	L(s, \pi_f\otimes \pi_g)=\sum_{n\geq 1}\frac{\lambda_{\pi_f\otimes \pi_g}(n)}{n^s}=\prod_pL_p(s, \pi_f\otimes \pi_g).
\end{align*}
It is also absolutely convergent  for $\Re e s>1$ and has  an Euler product of degree $4$. In both cases, absolute convergence follow from the average bound 
\begin{equation}
	\label{eqRSbound}
	\sum_{n\leq x}|\lf(n)|^2+\sum_{n\leq x}|\lambda_g(n)|^2\ll_{f,g,\eps} x^{1+\eps}
\end{equation}	for any $\eps>0$ (itself a consequence of Rankin-Selberg theory in the case $f=g$) and the Cauchy-Schwarz inequality. Moreover, $L(s, \pi_f\otimes \pi_g)$ admits analytic continuation to $\Cc$ with a possible pole at $s=1$ of order at most $1$ and occurring if and only if $f=g$. It has  a functional equation of the shape
$$\Lambda(s, \pi_f\otimes \pi_g)=\Lambda(1-s, \pi_f\otimes \pi_g),$$
$$\Lambda(s, \pi_f\otimes \pi_g)=\eps(\pi_f\otimes \pi_g)q(\pi_f\otimes \pi_g)^{s/2}L_\infty(s, \pi_f\otimes \pi_g)L(s, \pi_f\otimes \pi_g)$$
where $\eps(\pi_f\otimes \pi_g)=\pm 1$, $ q(\pi_f\otimes \pi_g)\geq 1$  is an integer dividing $(RR')^2$ and $L_\infty(s, \pi_f\otimes \pi_g)$ is a product of Gamma factors.

The local $L$-factors of both $L$-functions coincide at almost all primes:
$$ L_p(s, f\otimes g)=L_p(s, \pi_f\otimes \pi_g)$$
whenever $(p, RR') = 1$, 
and hence their analytic properties are closely related; in particular we have 
\begin{prop}
	The function $L(s, f\otimes g)$ has a meromorphic continuation to $\mathbb{C}$. In the region $\Re e s > 0$ it has at most a simple   pole at $s=1$ which exists if and only if $f=g$. Moreover, if $f\neq g$ we have $L(1, f\otimes g)\neq 0$. 
\end{prop}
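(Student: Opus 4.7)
The plan is to deduce the statement from the corresponding analytic properties of the canonical Rankin-Selberg $L$-function $L(s,\pi_f\otimes\pi_g)$ that were recalled just before the proposition, via the relation
$$L(s, f\otimes g) = L(s, \pi_f\otimes\pi_g) \cdot \prod_{p\mid RR'} \frac{L_p(s, f\otimes g)}{L_p(s,\pi_f\otimes\pi_g)},$$
which holds since the local Euler factors agree at all good primes. By Jacquet-Piatetski-Shapiro-Shalika, $L(s,\pi_f\otimes\pi_g)$ admits meromorphic continuation to $\Cc$; the finite product on the right is a rational function in the variables $\{p^{-s}\}_{p\mid RR'}$. This immediately gives meromorphic continuation of $L(s, f\otimes g)$ to $\Cc$.

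For the pole structure on $\Re s>0$, I would invoke the fact that $L(s,\pi_f\otimes\pi_g)$ is holomorphic on $\Re s\geq 1$ except for a possible simple pole at $s=1$, which exists if and only if $\pi_f\cong \pi_g^\vee$. Since $f,g$ have trivial central character, the associated cuspidal automorphic representations are self-dual, so the condition reduces to $\pi_f\cong \pi_g$, equivalent by strong multiplicity one to the newform equality $f=g$. It then remains to verify that the bad-prime correction factors contribute neither extra poles nor zeros at $s=1$. This uses temperedness (Deligne's Ramanujan-Petersson bound): all local Satake/Langlands parameters of $\pi_f,\pi_g$ have absolute value $\leq 1$, so $L_p(s,\pi_f\otimes\pi_g)$ has poles only for $\Re s\leq 0$. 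For the naive factor, one writes
$$L_p(s, f\otimes g) = (1-p^{-2s})^{-1}\sum_{k\geq 0}\lambda_f(p^k)\lambda_g(p^k)p^{-ks},$$
and observes via the Hecke relations \eqref{mult2} and the Deligne bound $|\lf(p^k)|\leq k+1$ that this series converges absolutely and is nonzero at $s=1$.

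For the final assertion $L(1, f\otimes g)\neq 0$ when $f\neq g$, I would appeal to the classical non-vanishing result of Jacquet-Shalika (equivalently Shahidi) that $L(s,\pi_f\otimes\pi_g)$ does not vanish on the line $\Re s=1$. Combined with the non-vanishing at $s=1$ of the bad-prime ratios established in the previous step, this yields $L(1, f\otimes g)\neq 0$.

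The main technical point, and the only place where care is really needed, is the bookkeeping at bad primes: one must show that neither $L_p(s, f\otimes g)$ nor $L_p(s,\pi_f\otimes\pi_g)$ has a zero or pole at $s=1$, so that the ratio inherits the correct behavior from the canonical $L$-function. This is routine given temperedness but is the one place where a cleaner argument via the canonical $L$-function alone would not suffice. Everything else is a direct invocation of the deep theory of Rankin-Selberg convolutions.
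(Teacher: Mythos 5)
The paper itself gives no proof of this proposition beyond the remark that the naive and canonical Rankin--Selberg $L$-functions share local factors at good primes, together with pointers to the literature (Kowalski's survey and \S 2.3 of the cited second-moment paper). Your proposal fills in exactly the argument the paper is implicitly relying on: compare $L(s,f\otimes g)$ to $L(s,\pi_f\otimes\pi_g)$ through the finite ratio of bad-prime Euler factors, invoke JPSS for continuation, the Jacquet--Shalika pole criterion plus self-duality and strong multiplicity one for the pole at $s=1$, Jacquet--Shalika/Shahidi for non-vanishing on $\Re s = 1$, and temperedness to control the bad-prime ratio. That is the right route and matches the paper's intent.

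The one step you wave at a bit too quickly is the non-vanishing (and, more importantly, the holomorphy and non-vanishing throughout $\Re s>0$) of the naive local factor
$L_p(s,f\otimes g)=(1-p^{-2s})^{-1}\sum_{k\geq 0}\lambda_f(p^k)\lambda_g(p^k)p^{-ks}$
at ramified $p$. You say this follows from the Hecke relations and the Deligne bound $|\lambda_f(p^k)|\leq k+1$, but that bound is for unramified primes, and a naive triangle-inequality estimate does not give non-vanishing at $s=1$ for small $p$ (e.g., with $p\parallel R$, $p\nmid R'$, $p=2$ the tail $\sum_{k\geq 1}(k+1)p^{-3k/2}$ exceeds $1$). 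What you actually need is the explicit rational form of the local factor in each ramification case. For instance, if $p\parallel R$ and $p\nmid R'$ then $\lambda_f(p^k)=\lambda_f(p)^k$ with $|\lambda_f(p)|=p^{-1/2}$, and the sum is
$\bigl(1-\lambda_f(p)\lambda_g(p)p^{-s}+\lambda_f(p)^2p^{-2s}\bigr)^{-1}$,
whose poles lie on $\Re s=-1/2$; similarly for $p^2\mid R$ (the sum degenerates to $1$) and $p\parallel R$, $p\parallel R'$ (a degree-one factor with pole at $\Re s=-1$). In every case the naive local factor is a product of geometric factors $(1-\gamma p^{-s})^{-1}$ with $|\gamma|\leq 1$, hence has neither zeros nor poles in $\Re s>0$, which is what lets you transfer holomorphy, the pole at $s=1$, and the non-vanishing from the canonical $L$-function. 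So the conclusion stands; just replace the appeal to the bound $|\lambda_f(p^k)|\leq k+1$ by the explicit local computation (or by the sharper bound $|\lambda_f(p^k)|\leq p^{-k/2}$ valid when $p\mid R$).
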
 

We define the ``naive'' symmetric square $L$-function of $f$ (differing from the ``canonical'' $L$-function by Euler factors only at primes dividing $R$, specified in \cite[Section 2.3.3]{blomer2023second}) by
$$L(s, \text{sym}^2 f) \zeta(s) = L(s, f\otimes f),$$
so that $\text{res}_{s=1}L(s, f\otimes f) = L(1, \text{sym}^2 f)$. 


\subsection{Galois average of Dirichlet characters}
Let $p$ be an odd prime. 

Let $\chi\in \Xipw$,  $n$ be an integer coprime with $p$ and $F$ a number field. In this section, we evaluate the Galois average of $\chi(n)$ defined as
	\begin{align*}
	\chi_{{\rm av}}(n)=\frac{1}{[F(\chi):F]}\sum_{\sigma\in \text{Gal}(F(\chi)/F)}\chi(n)^\sigma.
\end{align*}

For this we need to introduce some notation. Given  a number field $F$ we define $h_0=h_0(F,p)\geq 1$ to be the largest  integer such that
\begin{align*}
	\mathbb{Q}(\mu_{p^{h_0}})\subseteq F(\mu_p).
\end{align*}
\begin{rem}
	Comparing degrees, we have
$$[\mathbb{Q}(\mu_{p^{h_0}}):\Qq]=\vphi(p^{h_0})=(p-1)p^{h_0-1}\leq [F(\mu_p):\Qq]\leq (p-1)[F:\Qq]$$
so that
$$h_0-1\leq \frac{\log [F:\Qq]}{\log p}\leq \frac{\log [F:\Qq]}{\log 2}.$$
In particular $h_0$ is bounded in terms of $F$ only.
\end{rem} 
Recall the decomposition
$$\Zp\simeq \mu_{p-1}\times(1+p\Zp).$$
In particular, any $n\in\Zz$ coprime with $p$ decomposes as the product
$$n=[n]\left<n\right>,\ [n]\in \mu_{p-1}(\Zp),\ \left<n\right>\in 1+p\Zp.$$
Given $h\geq 1$, we denote $[n]_h$ and $\peter{n}_h$ the congruence classes in $(\Zz/p^k\Zz)^\times$ such that
$$[n]\equiv [n]_h\mods{p^h},\ \peter{n}\equiv \peter{n}_h\mods{p^h}.$$
In the sequel we will write $\mu_{p-1}$ indifferently for $\mu_{p-1}$ or $\mu_{p-1}(\Zz/p^h\Zz)$.
 
See \cite[Lemma 3.1]{sun2019generation} for the proof of the following orthogonality relation. 
\begin{prop} \label{galoisav}
	Let $F$ be a number field as above and for $h>h_0$ let $\chi\in \Xipw(h)$. 
For $n$ an integer coprime with $p$ we have
	$$\chi_{{\rm av}}(n)=\begin{cases}
		\displaystyle \frac{1}{[F(\mu_p):F]}\sum_{\tau\in \Gal(F(\mu_p)/F)}\chi(n)^\tau&\hbox{ if }\left<n\right> \equiv 1\mods{p^{h-h_0}}\\
		0&\hbox{ if }\left<n\right> \not\equiv 1\mods{p^{h-h_0}}.
	\end{cases}$$
\end{prop}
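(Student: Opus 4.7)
The plan is to compute the Galois average directly via an explicit description of $H := \Gal(F(\chi)/F)$ as a subgroup of $(\Zz/p^{h-1}\Zz)^\times$, followed by a character-orthogonality argument. Since $\chi$ has order $p^{h-1}$ with image $\mu_{p^{h-1}}$, one has $F(\chi) = F(\mu_{p^{h-1}})$. Using that $\Qq(\mu_{p^{h-1}})/\Qq$ is cyclotomic and that $h_0$ is maximal with $\Qq(\mu_{p^{h_0}}) \subseteq F(\mu_p)$, one checks (since subfields of $\Qq(\mu_{p^{h-1}})$ containing $\Qq(\mu_p)$ form a chain) that $F(\mu_p) \cap \Qq(\mu_{p^{h-1}}) = \Qq(\mu_{p^{h_0}})$, hence $[F(\mu_{p^{h-1}}) : F(\mu_p)] = p^{h-1-h_0}$. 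By restriction, the normal subgroup $\Gal(F(\mu_{p^{h-1}})/F(\mu_p)) \subseteq H$ is identified with the unique subgroup of order $p^{h-1-h_0}$ of the cyclic group $1+p\Zz/p^{h-1}\Zz$, namely $1+p^{h_0}\Zz/p^{h-1}\Zz$.

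Using the canonical decomposition $(\Zz/p^{h-1}\Zz)^\times = \mu_{p-1} \times (1+p\Zz/p^{h-1}\Zz)$ (valid since $p$ is odd), and noting that $|G_0|$ divides $p-1$ while the kernel above is a $p$-group, Schur--Zassenhaus (or direct splitting) yields
\begin{equation*}
H = G_0^{\mathrm{Teich}} \times \bigl(1+p^{h_0}\Zz/p^{h-1}\Zz\bigr),
\end{equation*}
where $G_0 = \Gal(F(\mu_p)/F) \subseteq (\Zz/p\Zz)^\times$ and $G_0^{\mathrm{Teich}}$ is its Teichm\"uller lift into $\mu_{p-1}$.

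Since $\chi$ is trivial on $\mu_{p-1}$, we have $\chi(n) = \chi(\peter{n})$, and for $\sigma = (g_0, u) \in H$ the Galois action on roots of unity gives $\chi(n)^\sigma = \chi(n)^{g_0 u}$. Parameterising $u = 1+p^{h_0}v$ with $v \in \Zz/p^{h-1-h_0}\Zz$, the inner sum becomes a geometric progression
\begin{equation*}
\sum_{v \in \Zz/p^{h-1-h_0}\Zz} \chi(n)^{g_0(1+p^{h_0}v)} = \chi(n)^{g_0}\sum_{v} \bigl(\chi(n)^{g_0 p^{h_0}}\bigr)^{v},
\end{equation*}
which equals $p^{h-1-h_0}\chi(n)^{g_0}$ when $\chi(n)^{p^{h_0}} = 1$ (using that $g_0$ is a unit, so raising to the $g_0$-th power preserves the order of a $p$-power root of unity) and vanishes otherwise. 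Finally, since $\chi$ restricted to $1+p\Zz/p^h\Zz$ is an isomorphism onto $\mu_{p^{h-1}}$, the condition $\chi(n)^{p^{h_0}} = 1$ is equivalent to $\peter{n}$ lying in the unique subgroup of order $p^{h_0}$ in $1+p\Zz/p^h\Zz$, i.e.\ $\peter{n} \equiv 1 \mods{p^{h-h_0}}$. Dividing by $|H| = |G_0| \cdot p^{h-1-h_0}$ and identifying $G_0^{\mathrm{Teich}}$ with $\Gal(F(\mu_p)/F)$ yields the claimed identity.

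The main technical point is the structural identification of $H$ in the second paragraph, which requires using the maximality of $h_0$ together with the cyclic structure of the pro-$p$ part of the cyclotomic Galois group; once this is in hand, the rest is bookkeeping with elementary character orthogonality, in the spirit of \cite[Lemma 3.1]{sun2019generation}.
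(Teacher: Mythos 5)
Your proof is correct, and supplies a self-contained argument for a statement that the paper itself delegates to Sun's Lemma~3.1 \cite{sun2019generation} without reproducing the proof. The structure --- identify $H := \Gal(F(\chi)/F)$ inside $(\Zz/p^{h-1}\Zz)^\times$, split off the pro-$p$ part, and evaluate the inner geometric progression --- is the natural route.

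One step deserves slightly more justification than the phrase ``Schur--Zassenhaus (or direct splitting)'' provides. Schur--Zassenhaus gives the \emph{existence} of a complement to $N := 1+p^{h_0}\Zz/p^{h-1}\Zz$ in $H$, but not that this complement equals $G_0^{\mathrm{Teich}}$. To close the gap: since $H$ is abelian and $N$ is its Sylow $p$-subgroup, the complement $C$ is the (unique) product of the Sylow $\ell$-subgroups with $\ell\neq p$, so $|C|=|G_0|$ is coprime to $p$. Any $(a,b)\in (\Zz/p^{h-1}\Zz)^\times=\mu_{p-1}\times(1+p\Zz/p^{h-1}\Zz)$ with $b\neq 1$ has order divisible by $p$, hence $C\subseteq\mu_{p-1}$. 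As $\mu_{p-1}$ is cyclic, $C$ is the unique subgroup of order $|G_0|$, and since reduction mod $p$ restricts to an isomorphism $\mu_{p-1}\xrightarrow{\sim}(\Zz/p\Zz)^\times$ carrying $H$ onto $G_0$, this subgroup is precisely $G_0^{\mathrm{Teich}}$. With that inserted, the remaining steps --- the geometric sum over $v\in\Zz/p^{h-1-h_0}\Zz$, the equivalence of $\chi(n)^{p^{h_0}}=1$ with $\peter{n}\equiv 1\mods{p^{h-h_0}}$ via the isomorphism $\chi\colon 1+p\Zz/p^h\Zz\xrightarrow{\sim}\mu_{p^{h-1}}$, and the re-identification of the residual $G_0^{\mathrm{Teich}}$-sum with the trace over $\Gal(F(\mu_p)/F)$ (noting that $\chi(n)\in\mu_{p^{h_0}}\subseteq F(\mu_p)$ in the non-vanishing case) --- are all correct.
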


\section{Determination of newforms}\label{determination section}

In this section we consider a tuple of newforms  whose Langlands parameters at a positive proportion of primes satisfy a suitable system of  equations and show that at least two of the forms are equal up to a quadratic twist. This will serve to show that the main term of a  second moment of twisted central $L$-values does not vanish, a crucial ingredient for Section \ref{generation of hecke field section}.  

The key ingredient is the following theorem which is essentially due to Rajan (see \cite[Cor.\ 1]{Rajan})

\begin{thm}\label{cor32} Let $f\in S_{2\kappa}^{{\rm new}}(R)$ and $g\in S_{2\kappa'}^{{\rm new}}(R')$ be non-CM newforms. Suppose that $\lf(\ell)=\lambda_g(\ell)$ for a positive proportion of primes $\ell$.
	Then there exists a real valued character $\chi_d$ of conductor $d$ dividing $RR'$ such that $f=g\otimes\chi_d$. 
	
In particular, if $R$ and $R'$ are square-free, we have $f=g$.
\end{thm}

\begin{rem} In a previous version of this paper we deduced Theorem \ref{cor32} from the following stronger result of Ramakrishnan \cite[page 30]{Ramappendix}. 
\begin{thm}\label{determinationRam} 
	Let $f\in S_{2\kappa}^{{\rm new}}(R)$ and $g\in S_{2\kappa'}^{{\rm new}}(R')$ be non-CM newforms such that
	\begin{align}\label{square equal}
		\lambda_f(\ell)^2=\lambda_g(\ell)^2 
	\end{align}
	for a positive proportion of primes $\ell$. Then there exists a real valued character $\chi_d$ of conductor $d \mid RR'$ such that $f=g\otimes\chi_d$. In particular,  for $(n, RR') = 1$ we have
	\begin{align*}
		\lambda_f(n)=\lambda_g(n)\chi_d(n)
	\end{align*} 
Moreover, if $R$ and $R'$ are square-free, we have $\chi_d=1$ and $f=g$. 
\end{thm}

However, as pointed out to us by the referee, in the present work, we need only Theorem \ref{cor32}; interestingly, Theorem 2 in Rajan's work \cite{Rajan} plays a crucial role in the proof  Theorem \ref{determinationRam}. We thank the referee for indicating the correct line of arguments. 	
\end{rem}

Let $(f_t)_{t\leq t_0}$, with $f_t\in S^{{\rm new}}_{2\kappa_t}(R_t)$, be a $t_0$-tuple of  non-CM newforms (not necessarily distinct). For  a prime $\ell$ coprime with $\prod_{t=1}^{t_0}R_i$, let $(c_{t,\ell},d_{t,\ell})_t\in \mathbb{C}^{2t_0}\setminus\{\mathbf{0}\}$ be a non-zero complex-valued vector and let $\{\alpha_{t}(\ell), \beta_{t}(\ell)\}$ be the Langlands parameter of $f_t$ at $\ell$, so that
$$\lambda_{f_t}(\ell)=\alpha_{t}(\ell)+\beta_{t}(\ell),\ \alpha_{t}(\ell)\beta_{t}(\ell)=1$$
for $1 \leq t \leq t_0$.  We have
\begin{prop}\label{quadtwist}
	Let $a\geq 0$ be an integer. Suppose that for a positive proportion of primes $\ell$ and for all  $s$ satisfying $a+1\leq s\leq a+2t_0$, we have 
	\begin{align*}
		\sum_{1\leq t\leq t_0}(c_{t,\ell}\alpha_{t}(\ell)^{s}+d_{t,\ell}\beta_{t}(\ell)^{s})=0.
	\end{align*}
Then there exist distinct $t_1\not= t_2$ and a real valued character $\chi_d$ of conductor $d$ dividing $R_{t_1}R_{t_2}$ such that $f_{t_1}=f_{t_2}\otimes \chi_d$. 	
	In particular, if the $R_{t}$'s are all square-free, there exist  $t_1\not= t_2$ such that $f_{t_1}=f_{t_2}$.
\end{prop}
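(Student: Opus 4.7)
The plan is to view the hypothesis, for each fixed prime $\ell$ in the positive-density set, as a linear system admitting the nontrivial solution $(c_{t,\ell},d_{t,\ell})$ and to exploit the resulting Vandermonde-type determinantal constraint on the Langlands parameters.

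First I would rewrite the $2t_0$ equations as $M_\ell\vec v_\ell=\vec 0$ with $\vec v_\ell\ne \vec 0$, where $M_\ell$ is the $2t_0\times 2t_0$ matrix whose $j$-th column is $(\gamma_{j}^{a+1},\ldots,\gamma_{j}^{a+2t_0})^{\!T}$ and
$$(\gamma_1,\ldots,\gamma_{2t_0})=(\alpha_{1,\ell},\ldots,\alpha_{t_0,\ell},\beta_{1,\ell},\ldots,\beta_{t_0,\ell}).$$
Pulling out $\gamma_j^{a+1}$ from each column reduces $\det M_\ell$ to a classical Vandermonde determinant; since $|\gamma_j|=1$ the prefactor is nonzero, so nontriviality of $\vec v_\ell$ forces two of the $\gamma_j$'s to coincide.

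Next, I would classify the possible coincidences. Using $\alpha_{t,\ell}\beta_{t,\ell}=1$, any equality among $\{\alpha_{t_1,\ell},\alpha_{t_2,\ell}\}$, $\{\beta_{t_1,\ell},\beta_{t_2,\ell}\}$ or $\{\alpha_{t_1,\ell},\beta_{t_2,\ell}\}$ for indices $t_1\ne t_2$ forces the multiset equality $\{\alpha_{t_1,\ell},\beta_{t_1,\ell}\}=\{\alpha_{t_2,\ell},\beta_{t_2,\ell}\}$, which yields $\lambda_{f_{t_1}}(\ell)=\lambda_{f_{t_2}}(\ell)$. The only remaining possibility is the self-coincidence $\alpha_{t,\ell}=\beta_{t,\ell}$, equivalent to $\lambda_{f_t}(\ell)=\pm 2$. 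By pigeonhole over the finitely many index choices, a positive proportion of the primes $\ell$ satisfies one fixed alternative.

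The main obstacle I expect is eliminating the self-coincidence alternative: I need that for each non-CM newform $f_t$, the set of primes with $\lambda_{f_t}(\ell)=\pm 2$ has density zero. This follows from the Sato--Tate law, as established in \cite{clozel2008automorphy,taylor2008automorphy,harris2010family,newton2021symmetric} and cited in the introduction, since the Sato--Tate measure $\tfrac{2}{\pi}\sin^2\theta\,d\theta$ assigns zero mass to $\theta\in\{0,\pi\}$; this is precisely where the non-CM assumption is used. Once this alternative is ruled out, there exist distinct $t_1\ne t_2$ with $\lambda_{f_{t_1}}(\ell)=\lambda_{f_{t_2}}(\ell)$ on a positive density set of primes, and \corref{cor32} then supplies a real-valued character $\chi_d$ of conductor dividing $R_{t_1}R_{t_2}$ with $f_{t_1}=f_{t_2}\otimes\chi_d$; in the square-free level case $\chi_d$ is trivial, giving $f_{t_1}=f_{t_2}$.
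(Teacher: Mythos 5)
Your proof is correct and follows essentially the same route as the paper: set up the Vandermonde-type system, observe that nontriviality of $(c_{t,\ell},d_{t,\ell})_t$ forces a vanishing determinant and hence a coincidence among the parameters $\gamma_j$, pigeonhole to fix one coincidence type on a positive-density set, rule out the self-coincidence $\alpha_{t,\ell}=\beta_{t,\ell}$ by a density-zero argument, and then invoke \corref{cor32}. The only cosmetic difference is that the paper discards the self-coincidence primes at the outset rather than at the end, and cites Serre's equidistribution result (as in \lemref{v lemma}) rather than full Sato--Tate for the density-zero step; also note the non-CM hypothesis is used not only there but again in \corref{cor32} via Ramakrishnan's theorem.
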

\begin{proof} For any $1 \leq t \leq t_0$,  the proportion of primes $\ell$ such that $\alpha_{t}(\ell)=\beta_{t}(\ell)\, (=\pm 1)$ is zero (cf.\ \cite{serre1981quelques}),  so up to restricting the set of primes $\ell$, we may assume that for any $t\leq t_0$ and any $\ell$ in our set we have 
$\alpha_{t}(\ell)\not=\beta_{t}(\ell)$.

 For each $\ell$ in our set we have a system of linear equations
	\begin{align*}
		(\alpha_{t}(\ell)^{s},\beta_{t}(\ell)^{s})_{s,t}\begin{pmatrix}
			c_{t,\ell}\\d_{t,\ell}
		\end{pmatrix}_t=\begin{pmatrix}
		0
	\end{pmatrix}_{s}.
	\end{align*}
	Thus the determinant of the Vandermonde matrix $(\alpha_{t}(\ell)^{s},\beta_{t}(\ell)^{s})_{s,t}$ is zero.

	By the pigeonhole principle there exist $t\not=t'$ and a subset of primes of positive proportion such that for all $\ell$ in that subset, either of the following holds  
	$$\alpha_t(\ell)=\alpha_{t'}(\ell),\ \alpha_t(\ell)=\beta_{t'}(\ell)\hbox{ or } \beta_t(\ell)=\beta_{t'}(\ell)$$
	(we cannot have $\alpha_{t}(\ell)\not=\beta_{t}(\ell)$ by our initial discussion). In either of these cases we have for all such $\ell$'s that
$$\lambda_{f_{t_1}}(\ell)=\lambda_{f_{t_2}}(\ell).$$
 	We conclude by Theorem \ref{cor32}. 
\end{proof}

		\section{Second moment over Galois orbit}\label{second moment section}
	
	In this section, we provide estimates on Galois averages of $|L(1/2,f\otimes \chi)|^2$. This will be the key ingredient for the Theorem \ref{mainthm}. A thorough survey of the second moment theory can be found in \cite{blomer2023second}. 
	
	\subsection{The second moment result}
		
	\begin{thm}\label{general l1l2 sum} Let $f,g\in S_{2\kappa}^{{\rm new}}(R)$ be non-CM newforms with the same level and trivial nebentypus and such that $\eps(f)\eps(g)=1$. Let $F$ be a number field. 
	
	Given $h\geq 2$ and a prime $p$, set $q:=p^h$  and let $\chi\in\Xipw(h)$. Let   $l_1,l_2$ be positive integers such that $(l_1,l_2)=(l_1l_2,pR)=1$. We have	
		\begin{equation}\label{startsum} 
		\begin{split}
			\frac{(l_1l_2)^{1/2}}{[F(\chi):F]}&\sum_{\sigma\in \Gal(F(\chi)/F)}L(1/2,f\otimes\chi^{\sigma})\ov{L(1/2,g\otimes\chi^{\sigma})}\chi^{\sigma}(l_1/l_2)\\
			&= MT(f,g;q,l_1,l_2)+O_{F,f,g, l_1, l_2}((\log q)^{\frac{16}{3\pi}-1}). 
			\end{split}
			\end{equation}
Here 
	\begin{equation*}
	MT(f,g;q,l_1,l_2) =P_{f,g,l_1,l_2}(\log q)
	+P_{f,g,l_2,l_1}(\log q)
\end{equation*}
where $P_{f,g,l_1,l_2}(X)$ is a polynomial of degree at most $1$ whose coefficients depends at most on $f,g,l_1,l_2$.

More precisely  when $f\not=g$,
 this polynomial is a constant given by
 $$P_{f,g,l_1,l_2}(X)=\frac{L^{(pl_1l_2)}(1, f\otimes g)}{\zeta^{(pl_1l_2)}(2)}A_{l_1}(f,g;1)A_{l_2}(g,f;1)$$
with
\begin{align}
A_{l}(f,g;s)=\sum_{d|l^\infty}\frac{\lambda_{f}(ld)\lambda_{g}(d)}{d^s};\label{Alfsdef}
\end{align}
while for $f=g$,  $P_{f,f,l_1,l_2}(X)$ is a polynomial of degree $1$ with leading coefficient equal to
\begin{equation}
	\label{leadingcoeff}
\frac{L^{(pl_1l_2)}(1, \sym^2f)}{\zeta_{pl_1l_2}(1)\zeta^{(pl_1l_2)}(2)} A_{l_1}(f,f;1)A_{l_2}(f,f;1) 
\end{equation}
and whose constant term is bounded by $O_{F,f,l_1,l_2}(1)$.
	\end{thm}

Here is an overview of the arguments to come.
We analyse the average on the left hand side of \eqref{startsum} using the approximate functional equation of Proposition \ref{functionalequation}, switch summations and use Proposition \ref{galoisav}. We are led to evaluate a sum over two variables ($m$ and $n$, say) connected by a congruence equation $\ell_1m\equiv \pm \xi\ell_2n\mods q$ for $\xi$ varying over $(p-1)$-roots of unity modulo $q$. In \S  \ref{maintermsubsection} we isolate a main term (the contribution of the $m,n$'s satisfying  the equality $\ell_1m= \ell_2n$) which we evaluate up an error term bounded by a negative power of $q$. We then evaluate the complement, which is an error term. In \S \ref{errortermsubsection} and \S \ref{optimization subsection}, we consider the contribution of the terms with $\xi\equiv \pm 1\mods q$ using spectral analysis of automorphic forms (i.e.\ Kloostermania) with different bounds depending on the relative sizes of $m$ and $n$; there we obtain a power saving in $q$.

 The contribution of the terms with $\xi\not\equiv \pm 1\mods q$ is handled in Section \ref{f=g error term section} by a different set of  techniques, namely geometry of numbers, sieve methods and the Sato--Tate distribution properties of the Hecke eigenvalue $\lf$, $\lambda_g$ along the primes. The outcome is a power saving in $\log q$; this yields an asymptotic formula in \eqref{startsum} but only in the case $f=g$.

\begin{rem} When $f\not=g$,  \eqref{startsum} is not an asymptotic formula. This is because  $16/(3\pi) - 1>0$. Achieving a negative exponent, which we certainly expect, seems to require new ideas. Notice however that  \eqref{startsum} at least allows us to distinguish between the cases $f=g$ and $f\not=g$. 	
\end{rem}

\begin{rem} One can check that in the error terms, the dependence in the parameters $f,g,l_1,l_2,[F:\Qq]$  is at most polynomial;  we don't need this here.
\end{rem}

\subsection{The factors $A_l(f,g;s)$}\label{local}
Since the ``main term'' involves the factors $A_{l_1}(g,f;1)$, $A_{l_2}(f,g;1)$ it is necessary to compute them explicitly.
\begin{lem}\label{AflLemma} For $(l,R)=1$ we have
	$A_{l}(f,g;s)=
	\prod_{{\ell^t \parallel l}}A_{\ell^t}(f,g;s)$ 
	where
	$$A_{\ell^t}(f,g;s)=\sum_{r\geq 0}\frac{\lf(\ell^{t+r})\lambda_g(\ell^r)}{\ell^{rs}}.$$
	For $\alpha_f(\ell)\not =\beta_f(\ell)$ (equivalently for $\lf(\ell)\not=\pm 2$) we have for any integer $t\geq 0$,
\begin{align}
	A_{\ell^t}(f,g;s)&=\frac{L_{\ell}(s, f\otimes g)}{\zeta_{\ell}(2s)}(a_{f,g,\ell}(s)\alpha_f(\ell)^t+b_{f,g,\ell}(s)\beta_f(\ell)^t)\label{recurrence solutionfg}
\end{align}
where
\begin{equation*}
	a_{f,g,\ell}(s)=\frac{c_{f,g,\ell}(s)-\beta_f(\ell)}{\alpha_f(\ell)-\beta_f(\ell)},\ 
b_{f,g,\ell}(s)=\frac{\alpha_f(\ell)-c_{f,g,\ell}(s)}{\alpha_f(\ell)-\beta_f(\ell)}.\label{afgbfgdef}
\end{equation*}
and
\begin{equation*}
	\label{cfgdef}
	c_{f,g,\ell}(s):=\Big(\lf(\ell)-\frac{\lambda_g(\ell)}{\ell^s}\Big)\Big(1-\frac{1}{\ell^{2s}}\Big)^{-1}.
\end{equation*}

For $\alpha_f(\ell)=\beta_f(\ell)$ (equivalently for $\lf(\ell)=\pm 2$) we have
\begin{equation}\label{soldege}
	A_{\ell^t}(f,g;s)=\frac{L_{\ell}(s, f\otimes g)}{\zeta_{\ell}(2s)}(1-t+c_{f,g,\ell}(s)t).
\end{equation}

\end{lem}
\begin{proof}
By \eqref{heckecyclotomic} we have 
\begin{align*}
	A_{\ell^t}(f,g;s)=\lambda_f(\ell)A_{\ell^{t-1}}(f,g;s)-A_{\ell^{t-2}}(f,g;s)
\end{align*}
for a prime $(\ell, R) = 1$ and $t \geq 2$.  This is a recurrence relation with characteristic polynomial  $X^2-\lambda_f(\ell)X+1$ whose zeros are $\alpha_f(\ell),\beta_f(\ell)$. Moreover, we have
$$A_{\ell^0}(f,g;s)=\frac{L_\ell(s,f\otimes g)}{\zeta_\ell(2s)}$$
and for  $t=1$ we have by \eqref{heckecyclotomic}
$\lambda_f(\ell^{r+1})=\lambda_f(\ell)\lambda_f(\ell^{r})-\chi_0(\ell)\lambda_f(\ell^{r-1})$ so that
\begin{align*}
	A_{\ell}(f,g;s)&=\sum_{r\geq 0}\frac{\lf(\ell^{r+1})\lambda_g(\ell^r)}{\ell^{rs}}=\lf(\ell)+\lf(\ell)\sum_{r\geq 1}\frac{\lf(\ell^{r})\lambda_g(\ell^r)}{\ell^{rs}}-\sum_{r\geq 1}\frac{\lf(\ell^{r-1})\lambda_g(\ell^r)}{\ell^{rs}}\\
&=\lf(\ell)\frac{L_\ell(s,f\otimes g)}{\zeta_{\ell}(2s)}-\frac{1}{\ell^s}A_\ell(g,f;s).
\end{align*}

Switching the roles of $f$ and $g$ we have
$$A_{\ell}(g,f;s)=\lambda_g(\ell)\frac{L_\ell(s,f\otimes g)}{\zeta_{\ell}(2s)}-\frac{1}{\ell^s}A_\ell(f,g;s)$$
hence
$$A_{\ell}(f,g;s)=\lf(\ell)\frac{L_\ell(s,f\otimes g)}{\zeta_{\ell}(2s)}-\frac{1}{\ell^s}\bigl(\lambda_g(\ell)\frac{L_\ell(s,f\otimes g)}{\zeta_{\ell}(2s)}-\frac{1}{\ell^s}A_\ell(f,g;s)\bigr)$$
or in other terms
$$A_{\ell}(f,g;s)=\frac{L_\ell(s,f\otimes g)}{\zeta_{\ell}(2s)}(\lf(\ell)-\frac{\lambda_g(\ell)}{\ell^s})(1-\frac{1}{\ell^{2s}})^{-1}=\frac{L_\ell(s,f\otimes g)}{\zeta_{\ell}(2s)}c_{f,g,\ell}(s)$$

If $\alpha_f(\ell)\not=\beta_f(\ell)$ (i.e.\ $\lf(\ell)\not=\pm 2$), then $A_{\ell^t}(f;s)$ is equal to the expression \eqref{recurrence solution}
where $a_{f,\ell}(s)$ and $b_{f,\ell}(s)$ satisfy
$$
	a_{f,\ell}(s)+b_{f,\ell}(s)=1,\ \ 
	a_{f,\ell}(s)\alpha_f(\ell)+b_{f,\ell}(s)\beta_f(\ell)=c_{f,g,\ell}(s).
$$
If $\lf(\ell)=\pm 2$, the equation is degenerate and  the solutions are linear in $t$ and considering we cases $t=0,1$ we obtain to
\eqref{soldege}.

\end{proof}

\begin{rem}\label{remf=g} When $f=g$, we have
\begin{equation}
	\label{cffdef}c_{f,f,\ell}(s)=\frac{\lf(\ell)}{1+\ell^{-s}}
	\end{equation}
 To simplify notations, we will write  $A_l(f;s)$ instead of $A_l(f,f;s)$ and similarly $a_{f,\ell}(s)$, $b_{f,\ell}(s)$ and $c_{f,\ell}(s)$ for $a_{f,f,\ell}(s),\ b_{f,f,\ell}(s)$ and $c_{f,f,\ell}(s)$ respectively. With these notations, we have for $\lf(\ell)\not=\pm 2$ and $\ell\geq 0$ that
 \begin{equation}
	a_{f,\ell}(s)=\frac{c_{f,\ell}(s)-\beta_f(\ell)}{\alpha_f(\ell)-\beta_f(\ell)},\ 
b_{f,\ell}(s)=\frac{\alpha_f(\ell)-c_{f,\ell}(s)}{\alpha_f(\ell)-\beta_f(\ell)}\label{afbfdef}
\end{equation}
and 
\begin{align}
	A_{\ell^t}(f;s)&=\frac{L_{\ell}(s, f\otimes f)}{\zeta_{\ell}(2s)}(a_{f,\ell}(s)\alpha_f(\ell)^t+b_{f,\ell}(s)\beta_f(\ell)^t), \label{recurrence solution}
\end{align}
while for $\lf(\ell)=\pm 2$ we have
\begin{equation}\label{soldegef=g}
	A_{\ell^t}(f;s)=\frac{L_{\ell}(s, f\otimes f)}{\zeta_{\ell}(2s)}(1-t+c_{f,\ell}(s)t).
\end{equation}
\end{rem}

\subsection{Isolating the main term}\label{maintermsubsection}
	We begin by applying the approximate functional equation in Proposition \ref{functionalequation}. By averaging over Galois orbits of characters, we will see that the diagonal terms form the main term. 
	
	By Proposition \ref{functionalequation} and since $\chi(-1)=+1$, $\chi(R)\ov\chi(R)=1$ and $\eps(f)\eps(g)=1$, the term on the left-hand side of  (\ref{startsum}) equals 
	\begin{multline}
		(l_1l_2)^{1/2}\sum_{n,m}\frac{\lambda_f(m)\lambda_g(n)}{(nm)^{1/2}}W\bigg(\frac{nm}{q^2R^2}\bigg)\times\\\frac{1}{[F(\chi):F]}\sum_{\sigma\in \mathrm{Gal}(F(\chi)/F)}\bigl(\chi^\sigma(l_1m\overline{l_2n})+\chi^\sigma(l_2m\overline{l_1n})\bigr).
		\label{sums}
	\end{multline}
	where
	$$W(y):=W_{f,g,+,1/2}(y)$$
	is defined in \eqref{Wdef}.
	
		By Proposition \ref{galoisav}, the sum \eqref{sums} is the sum of two terms 
		\begin{multline}
(l_1l_2)^{1/2}
\sumsum_\stacksum{(nm,p)=1}{\peter{l_1m\overline{l_2n}}\equiv 1\, (\text{mod }p^{h-h_0})}\frac{\lambda_f(m)\lambda_g(n)}{(nm)^{1/2}}W\bigg(\frac{nm}{q^2R^2}\bigg)\times\\
	\frac{1}{[F(\mu_p):F]}	\sum_{\tau\in \text{Gal}(F(\mu_p)/F)}\chi^\tau(l_1m\overline{l_2n})\label{congruencesum}	
		\end{multline}
 and the same expression as 	\eqref{congruencesum} but with $l_1$ and $l_2$ exchanged.

		The diagonal terms  $l_1m=l_2n$  contribute the main term in \eqref{congruencesum} which we now evaluate. Since $(l_1,l_2)=1$ we can write $l_1m=l_2n=l_1l_2n'$   so that (writing $n$ for $n'$), this term contributes
		\begin{align}
			&\sum_{(n,p)=1}\frac{\lambda_f(l_2n)\lambda_g(l_1n)}{n}W\Big(\frac{l_1l_2n^2}{q^2R^2 }\Big)
			= \int_{(1)}D^{(p)}(f,g,2s+1;l_2,l_1)\Big( \frac{q^2 R^2}{l_1l_2}\Big)^s \widehat{W}(s)ds\label{cauchy integral}
		\end{align}
		where
		$$
			D^{(p)}(f,g,s;l_1,l_2)=\sum_{(n,p)=1}\frac{\lambda_{f}(nl_1)\lambda_{g}(nl_2)}{n^s} = A_{l_1}(f,g;s)A_{l_2}(g,f;s) \frac{L^{(pl_1l_2)}(s, f\otimes g)}{\zeta^{(pl_1l_2)}(2s)}
		$$
		where
	$A_{l}(f,g;s)$ is defined in \eqref{Alfsdef} and all expressions are 
  absolutely convergent for $\Re e(s)>1$. This follows since 
 $n\mapsto \lf(n)$ is multiplicative and $(l_1, l_2) = 1$. 
	
		Since $D^{(p)}(f,g,s;l_1,l_2)$ differs from $L(f\otimes g, s)$ by a finite Euler product, it has a holomorphic continuation to $\Re e(s)>1/2$ except for a simple pole at $s=1$ if $f=g$.
		Thus from \eqref{Wdef} we see that the integrand of \eqref{cauchy integral} has a pole of order at most 2 at $s=0$, so that, shifting the contour of integration in (\ref{cauchy integral}) to $(-1/4+\epsilon)$, we pick up a residue and the integral equals
		\begin{align*}
			P_{f,g,l_1,l_2}(\log q)+O_{f, g, l_1, l_2}(q^{-1/2+\epsilon})
		\end{align*}
		where
		$P_{f,g,l_1,l_2}(X)$ is the  polynomial of degree at most $1$ given in the statement of Theorem \ref{general l1l2 sum} with the indicated dominant term.  
		%


\subsection{The error term: first steps}\label{errortermsubsection}
In this section we initiate the evaluation of the  off-diagonal terms in \eqref{congruencesum}, i.e.\ the contribution of the $m,n$ satisfying the additional condition $l_1m\neq l_2 n$. 
We will show that they are bounded by $O_{h_0,f,g, l_1, l_2}((\log q)^{\frac{16}{3\pi} -1})$
and in particular  error terms when $f=g$. We recall that $h_0$ depends only on the number field $F$. 

The condition $\peter{u}=1\mods{p^{h-h_0}}$ is equivalent to $u=\xi(1+ap^{h-h_0})\mods{p^h}$ for some $\xi\in\mu_{p-1}(\Zz/p^{h}\Zz)$ and $a\in\Zz/p^{h_0}\Zz$, so we  have to evaluate
	\begin{multline}
			\frac{(l_1l_2)^{1/2}}{[F(\mu_{p}):F]}\sum_{\substack{\xi\in \mu_{p-1}\\0\leq a\leq p^{h_0}-1\\
			\tau\in \text{Gal}(F(\mu_p)/F)}}\chi^\tau(\xi(1+ap^{h-h_0}))\\
			\times\sum_{\substack{l_1m\neq l_2n\\l_1m\equiv \xi l_2n \, (\text{mod } p^{h-h_0})\\(mn,p)=1}}\frac{\lambda_{f}(m)\lambda_{g}(n)}{(nm)^{1/2}}W\bigg(\frac{nm}{q^2R^2}\bigg).\label{doublesumet}
	\end{multline}
	
	Since $p$ and $h_0$ are considered fixed, it suffices to show that for $\xi\in\mu_{p-1}$  we have
	\begin{equation}\label{ETgoal}
	ET(f, g; p^{h-h_0}, l_1, \xi l_2)\ll_{h_0,f,g, l_1, l_2}(\log p^h)^{\frac{16}{3\pi} - 1}
	\end{equation}
	where
\begin{equation}
	\label{ETdef} 
	ET(f, g; q, l_1, \xi l_2)	:= 	\sum_{\substack{l_1m\neq l_2n\\l_1m\equiv \xi l_2n \, (\text{mod } q)\\(mn,q)=1}}\frac{\lambda_{f}(m)\lambda_{g}(n)}{(nm)^{1/2}}W\bigg(\frac{nm}{q^2R^2}\bigg).
\end{equation}
	
 In this section we consider the case $\xi=\pm 1$; the general case will be discussed in \S \ref{f=g error term section}.
 
First we note that because of the congruence $\l_1m\equiv \pm l_2n \, (\text{mod } p^{h-h_0})$ and since $(\ell_1\ell_2,p)=1$, the condition $(mn,p)=1$ is equivalent to $(m,p)=1$; hence adding and subtracting the contribution of the $m$ divisible by $p$ and using the multiplicativity relations \eqref{mult2}
we are reduced to bounding sums of the shape
$$	\sum_{\substack{dl_1m\neq l_2n\\dl_1m\equiv \pm l_2n\, (\text{mod }p^{h-h_0})}}\frac{\lambda_{f}(m)\lambda_{g}(n)}{(mn)^{1/2}}W\bigg(\frac{dmn}{q^2R^2}\bigg)$$
where $d=1,p,p^2$. We will prove the following

\begin{lem}\label{SCPlem} Let $f,g\in S_{2\kappa}^{{\rm new}}(R)$, $p\geq 2$ a prime, $d\in\{1,p,p^2\}$, $l_1,l_2\geq 1$ coprime with $p$ and $h\geq h_0\geq 0$ two integers and let $q=p^h$. 

We have for any $\eps>0$ that 
  	\begin{equation}\label{SCPpm}
	\sum_{\substack{dl_1m\neq l_2n\\dl_1m\equiv \pm l_2n \, (\text{mod } p^{h-h_0})}}\frac{\lambda_{f}(m)\lambda_{g}(n)}{(mn)^{1/2}}W\bigg(\frac{dmn}{q^2R^2}\bigg)\ll_{f,g,p,h_0,\eps}(l_1l_2)^{3/2}q^{-\etavalue+\eps}
\end{equation}
as long as $l_1,l_2\leq q$.
\end{lem}
 This is a variation of
\cite[Thm 5.3]{blomer2023second} which was established with $q=p^{h}$ replaced by a prime, $d=1$ and with $\etavalue$ replaced by $1/144$.

\begin{rem}
	Rewriting the congruence relation $dl_1m\equiv \pm l_2n \, (\text{mod }p^{h-h_0})$ into the form
	$$dl_1m \mp l_2n=p^{h-h_0}k$$
	for $k$ is an integer satisfying
	$$1\leq |k|\ll \frac{dl_1M+l_2N}{p^{h-h_0}}$$
	($k$ is not zero because of the condition $dl_1m\not= l_2n$) and \eqref{SCPpm} is an instance of the {\em Shifted Convolution Problem}.
\end{rem}

	\begin{rem}
	We could have given a better exponent than $1/30$, but we have aimed for a relatively simple fraction as our point is  to furnish a positive  power saving in $q$.
	\end{rem}	

The proof is identical to that of \cite[Thm 5.3]{blomer2023second}: it is based on the work of the first named author and Mili\'cevi\'c \cite{blomer2015second}, the only difference being the following estimate for bilinear sums of Kloosterman sums. 

Given $r,l\geq 1$ be integers with $(l,r)=1$, $d\in \{1,p,p^2\}$, $M,N^*\geq 1$  and some complex numbers $\bfalpha=(\alpha_m)_{m\leq M},\bfbeta=(\beta_n)_{n\leq N^*}$, we define the bilinear sum
\begin{equation}
	\label{Babsum}
	B(\bfalpha,\bfbeta;r):=\sumsum_\stacksum{1\leq m\leq M}{1\leq n\leq N^*}\alpha_m\beta_n\frac{S(ldm,n;r)}{\sqrt{r}}
\end{equation}
where
$$S(m,n;r):=\sum_\stacksum{x\mods r}{(x,r)=1}e\Big(\frac{mx+n\ov x}{r}\Big)$$ the un-normalized Kloosterman sum modulo $r$.

\begin{lem}\label{BMlemma} Let the notation be as above and let $r=p^{h'}$. Suppose that for any $\eps>0$ and any $m\leq M,\ n\leq N^*$ we have
$$\alpha_m\ll_\eps m^{\eps},\ \beta_n\ll_\eps n^\eps,$$
then we have the upper bound
	\begin{equation}
	\label{BMbound}
	B(\bfalpha,\bfbeta;r)\ll_{\eps,p} (MN^*q)^\eps MN^*\min_{s\mid r}\Big(\Big(\frac{r}{N^*}\Big)^{1/2}+\Big(\frac{s}r\Big)^{1/4}+\Big(\frac{r}{M^2s}\Big)^{1/4}\Big).
\end{equation}
\end{lem}
\proof Given $s \mid r$,  by the Cauchy-Schwarz inequality we have
$$|B(\bfalpha,\bfbeta;r)|^2\ll_\eps \frac{M^{1+\eps}}{r}\sum_{m\leq M}\bigl|\sum_{n\leq N^*}\beta_n{S(dm,n;r)}\bigr|^2.$$
Following the proof of  \cite[Thm.\ 5]{blomer2015second} we obtain
$$|B(\bfalpha,\bfbeta;r)|^2\ll_{\eps,p} \frac{M^{1+\eps}}{r}(MN^*r)^\eps (M{N^*})^{2}\Big(\frac{s}{N^*}+(\frac{s}{r})^{1/2}+\frac{1}{M}(\frac{r}{s})^{1/2}\Big).$$
\qed

\begin{rem} Strictly speaking \cite[Thm.\ 5]{blomer2015second} provides a bound for bilinear sums of Kloosterman sums without the factor $d\in \{1,p,p^2\}$  in the argument of the Kloosterman sum and with the additional constraints that $m$ and $n$ vary over dyadic intervals (i.e.\ $m\in [\frac{1}{2}M,M]$, $n\in [\frac{1}{2}N^*,N^*]$) and $m$ is coprime with $p$, but since $p$ is fixed and the given upper bounds are increasing with $M$ and $N^*$, these extra modifications can easily be taken care of.
\end{rem}	

\subsection{Proof (sketch) of Lemma \ref{SCPlem}}
\label{optimization subsection}

	We now recall the argument from \cite[\S 5.7]{blomer2023second} adapted to the present situation using \eqref{BMbound} instead of \cite[Prop.\ 3.5]{blomer2023second} (for $q$ a prime).

	By a dyadic partition of unity the sum on the left hand side of \eqref{SCPpm} is bounded by $O((\log q)^2)$ sums of the shape
	$$ET(f,g;q,\pm;M,N):=\frac{1}{(MN)^{1/2}}\sum_{\substack{dl_1m\neq l_2n\\dl_1m\equiv \pm l_2n \, (\text{mod }p^{h-h_0})}}{\lambda_{f}(m)\lambda_{g}(n)}W_1\bigg(\frac{m}{M}\bigg)W_2\bigg(\frac{n}{N}\bigg)$$
	with $MN\leq q^{2+\eps}$ and $W_1(\bullet)$, $W_2(\bullet)$ are smooth compactly supported functions depending on a parameter of size $O_{f,g,p,h_0}((\log q)^2)$.

	We let $\eta=\etavalue$, and using with the notations of \cite[\S 5.7]{blomer2023second} we define
	$$M=q^\mu,\ N=q^\nu,\ \max(l_1,l_2)=q^{\lambda}.$$
	We have therefore
	  $$0\leq \lambda,\mu, \nu,\quad \lambda\leq 1/2, \quad  \mu+\nu\leq 2+\eps$$
	   and by symmetry we (may) assume that $$0\leq \mu\leq \nu,\quad \mu\leq 1+\eps.$$
	  We also set $$\mu^*=2-\mu,\ \nu^*=2-\nu$$
	  so that $$q^{\nu^*}=q^2/N\asymp N^*:=p^{h-h_0}R/N.$$
	 
	  Writing
	  $$|ET(f,g;q,\pm;M,N)|=q^\beta,$$
	 our aim is to prove that for $\lambda,\mu,\nu$ as above we have
	\begin{equation}
		\label{optimisationgoal}
		 \beta\leq \eps+\frac32\lambda-\eta,
	\end{equation}
	for any $\eps>0$. In the sequel we will omit  $\eps$ and will replace the inequalities $\leq$ with strict ones $<$.
	
	By the trivial bound in \cite[\S 5.4]{blomer2023second} (note that the Ramanujan conjecture is known in the holomorphic case) we have
	$$\beta<\lambda+
	\frac{1}2(\mu+\nu-2).$$
	
	Since otherwise \eqref{optimisationgoal} holds, we  assume that
	$$2-2\eta
	 \leq \mu+\nu\leq 2$$
	or equivalently
	$$-2\eta
	\leq \mu-\nu^*\leq 0;$$
	using that $\nu\leq 2$ we will simplify this condition to
	$$-2\eta 
	\leq \mu-\nu^*\leq 0.$$
	Using \cite[\S 5.5]{blomer2023second} we have
$$
\beta<
\frac32\lambda+\sup\Bigl(\frac{1}4(\nu-\mu-1),\frac{1}2(\nu-\mu-1),
-\frac{1}2 \Bigr),
$$
	and we have \eqref{optimisationgoal} unless
	\begin{equation*}
  1-4\eta\leq  \nu-\mu\quad\text{or equivalently}\quad \mu+\nu^*\leq 1+4\eta.	
\end{equation*}

Applying the Voronoi summation formula on the $n$ variable we transform the $n$-sum of length $N$ into a dual sum of length $\ll N^*$ which yields a bilinear sum of Kloosterman sums of the shape \eqref{Babsum} with $r=p^{h-h_0}$; using the individual ``Weil type'' bound  for Kloosterman sums (see \cite[Cor. 11.12]{IKbook} )
\begin{equation}
	\label{Weilbound}
	|S(m',n';r)|\leq d(r)(m',n',r)^{1/2}r^{1/2}
\end{equation}
we see that
$$\beta< \frac12(1+\mu-\nu)=\frac12(\mu+\nu^*-1),$$
which establishes \eqref{optimisationgoal} unless 
\begin{equation*}
1-2\eta \leq \mu+\nu^*\leq 1+4\eta.
\end{equation*}
Finally applying \eqref{BMbound} with $s=p^{h_1}=q^{\sigma}$ for some $0\leq h_1\leq h-h_0$, we obtain (recall that $p$ is fixed) 
$$\beta< \frac12(\mu+\nu^*-1)+\min_{0\leq \sigma <1}\max\Big(\frac{1}2(\sigma-\nu^*),\frac{1}4(\sigma-1),\frac{1}4(1-\sigma-2\mu)\Big).$$
We choose $\sigma$ such that $\frac{1}2(\sigma-\nu^*)=\frac{1}4(1-\sigma-2\mu)$, 
that is
$$\sigma = \frac{1 - 2\mu + 2\nu^*}{3},
$$
and so
$$\beta\leq \frac12(\mu+\nu^*-1)+\max\Big(\frac{1 - 2\mu - \nu^*}{6},\frac{-1 - \mu + \nu^*}{6}
\Big).$$
Since $\nu\leq 2$ we have
$$-2\eta
\leq \mu-\nu^*\leq 0\hbox{ and }1-2\eta \leq \mu+\nu^*\leq 1+4\eta$$
which implies that
\begin{align*}
	\frac{1 - 2\mu - \nu^*}{6}-\frac{-1 - \mu + \nu^*}{6}=\frac{2 - \mu - 2\nu^*}{6}&\geq \frac{1}{6}\Big(2-\frac{3}2(\mu+\nu^*)+\frac{\mu-\nu^*}{2}\Big)\\
	&\geq\frac{1}{12}\Big(\frac{1}{2}-\frac{7}\eta- \frac{7}{32}\Big)>0
\end{align*}
and
$$\max\Big(\frac{1 - 2\mu - \nu^*}{6},\frac{-1 - \mu + \nu^*}{6}
\Big)=\frac{1 - 2\mu - \nu^*}{6}$$
so that
\begin{align*}
	\beta&< \frac12(\mu+\nu^*-1)+\frac{1 - 2\mu - \nu^*}{6}=\frac{3(\mu+\nu^*)-(\mu-\nu^*)-4}{12}\\
	&< \frac{3(1+4\eta)+2(\eta+7/32)-4}{12}<0
\end{align*}
This concludes the sketch of the proof of \eqref{SCPpm}.\qed

\section{\texorpdfstring{Analysis of the $\xi\neq \pm 1$ case}{Analysis of kappa not equal to plus or minus 1 case}}\label{f=g error term section}

	In this section, we obtain a logarithmic upper bound for ${\rm ET}(f, g;p^{h-h_0},l_1,\xi l_2)$ where $\xi\in \mu_{p-1}(\Bbb{Z}/p^h\Bbb{Z})\setminus\{\pm 1\}$ as defined in \eqref{ETdef}.    We work slightly more generally and replace $p^{h-h_0}$ by a general modulus $q$. We note that for $\xi \not= 1$ the condition $l_1m \not= l_2n$ can be dropped.

		\begin{prop}\label{final-error} Let $q \in \Bbb{N}$, $(l_1l_2, q) = 1$  and $ \xi^d \equiv 1 \, (\text{\rm mod }q)$ for $d\geq 4$ where $\xi \not\equiv \pm 1\, (\text{\rm mod } q)$. Then
		  $${\rm ET}(f, g;q, l_1, l_2\xi) \ll_{\varepsilon, f, g} (\log q)^{\frac{16}{3\pi} - 1} + q^{-\frac{1}{2d} + \varepsilon} (l_1+l_2)$$
		  for any $\varepsilon > 0$.
	\end{prop}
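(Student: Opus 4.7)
The key new feature compared to the $\xi=\pm 1$ case treated in Section \ref{second moment section} is that for $\xi\not\equiv\pm 1 \pmod q$, the congruence $l_1m\equiv\xi l_2 n\pmod q$ admits no solutions $l_1m=l_2n$ in the integers within the relevant range $mn\leq q^{2+\varepsilon}$, so that there is no main-term contribution to extract and every pair $(m,n)$ behaves as in the off-diagonal analysis of Section~\ref{second moment section}. The strategy is to prove the two summands in the claimed bound via a smooth dyadic decomposition $m\sim M$, $n\sim N$ (using Lemma \ref{partition of unity}), applying Voronoi summation on balanced dyadic blocks to obtain the polynomial saving, and exploiting multiplicativity together with the Sato--Tate law on the unbalanced blocks, in the spirit of \cite{blomer2023unipotent}.

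\textbf{The $q^{-1/(2d)+\varepsilon}(l_1+l_2)$ contribution.} When both $M$ and $N$ exceed a small positive power of $q$, I would detect the congruence by additive characters modulo $q$ and apply Lemma \ref{Voronoi} to dualize the $n$-sum. The arithmetic sums transform into Kloosterman sums twisted by $\xi$, and the Weil bound $|S(a,b;q)|\ll q^{1/2+\varepsilon}(a,b,q)^{1/2}$ combined with Cauchy--Schwarz on the dual $m$-variable controls them. The crucial saving of $q^{-1/(2d)}$ comes from the observation that, since $\xi$ is a primitive $d$-th root of unity modulo $q$, one has $(\xi-1,q)\leq q^{1-1/d}$: this follows from the fact that $x^d-1$ has at most $d$ roots in $\mathbb{Z}/p^k\mathbb{Z}$ for each prime power dividing $q$, limiting the $q$-adic valuation of $\xi-1$. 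The factor $l_1+l_2$ records the two places at which the auxiliary moduli enter after Voronoi (one from the $m$-side and one from the $n$-side).

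\textbf{The $(\log q)^{16/(3\pi)-1}$ contribution.} In the complementary unbalanced regime, the Voronoi bound is inefficient, and I would instead use the Hecke relations \eqref{mult2} to strip off a single prime factor from $\lambda_f(m)$ and from $\lambda_g(n)$. This reduces the problem to sums over primes $p$ forced into a thin residue class modulo (a divisor of) $q$, weighted by $|\lambda_f(p)|$ or $|\lambda_g(p)|$. By the Sato--Tate theorem (\cite{clozel2008automorphy,taylor2008automorphy,harris2010family,newton2021symmetric}), one has the logarithmic prime average
\begin{equation*}
\sum_{p\leq Y}\frac{|\lambda_f(p)|}{p} \;=\; \frac{8}{3\pi}\log\log Y + O(1),
\end{equation*}
and the analogous bound for $g$. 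The exponent $\tfrac{16}{3\pi}=2\cdot\tfrac{8}{3\pi}$ arises as the sum of the first absolute moments of the Sato--Tate measure for $f$ and $g$. Comparing to the trivial logarithmic contribution $\log q$ that would arise without exploiting the congruence, one extracts the saving $(\log q)^{16/(3\pi)-1}$.

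\textbf{Main obstacle.} The delicate point is organizing the multiplicative decomposition so that the Sato--Tate law is invoked precisely once for each of $f$ and $g$ -- producing additive log--log exponents that sum to $\tfrac{16}{3\pi}$ -- while the remaining factors are absorbed in $O(1)$ through a Rankin--Selberg style estimate bounding $\sum_n|\lambda_f(n)\lambda_g(n)|/n^{1+\varepsilon}$. A secondary technical difficulty is verifying the uniform gcd bound $(\xi-1,q)\leq q^{1-1/d}$ over all primitive $d$-th roots of unity modulo $q$: the hypothesis $d\geq 4$ is used here to ensure that this gcd saving dominates over the $(l_1,l_2)$ and local factors appearing in the Weil estimate, giving the clean form of the bound stated in the proposition.
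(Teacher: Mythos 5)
Your plan correctly identifies that the diagonal is empty when $\xi\not\equiv\pm 1$, and you correctly anticipate that Sato--Tate must be invoked once for $f$ and once for $g$ to produce the exponent $\tfrac{16}{3\pi}$. However, both of your proposed mechanisms diverge from what is actually needed, and the power-saving one is broken.

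\textbf{The gcd mechanism for the power saving is wrong.} You claim that the $q^{-1/(2d)}$ saving comes from $(\xi-1,q)\leq q^{1-1/d}$. In the actual application $q=p^{h-h_0}$ is a prime power and $\xi\in\mu_{p-1}(\Zz/q\Zz)$ with $\xi\not=1$; since reduction modulo $p$ is an isomorphism on $\mu_{p-1}$, one simply has $(\xi-1,q)=1$. The gcd carries no $d$-dependent information at all and cannot be the source of the saving. Moreover, after detecting the congruence by additive characters and applying Voronoi on $n$, the Kloosterman sums that arise have arguments of the shape $(\bar l_1 l_2\xi m, n'; d)$ for divisors $d\mid q$; the quantity $(\xi-1,q)$ does not appear in the Weil bound for these sums. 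The relevant observation — which is what the paper uses and which you do not recover — is that the lattice $\Lambda=\{(m,n): l_1m\equiv\xi l_2 n \pmod q\}$ has no short nonzero vectors: any $(m,n)\in\Lambda$ with $\max(|m|,|n|)<q^{1/d}/(2\max(l_1,l_2))$ satisfies $(l_1m)^d\equiv(l_2n)^d\pmod q$ as an equality of integers, forcing $l_1m=\pm l_2n$, contradiction. This geometry-of-numbers input is what produces both the $q^{-1/(2d)}$ exponent and the $l_1+l_2$ dependence (through the lattice-point count in Lemma~\ref{lattice}), and it is entirely absent from your sketch.

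\textbf{The multiplicativity step is not strong enough as described.} ``Stripping off a single prime factor'' from $\lambda_f(m)$ and $\lambda_g(n)$ via Hecke relations and then summing over primes in a residue class does not yield the $(\log q)^{\frac{16}{3\pi}-2}$ saving on $MN/q$. The congruence $l_1m\equiv\xi l_2n$ couples $m$ and $n$ jointly, so isolating one prime in each variable leaves a residual bilinear sum that is as hard as the original. The paper instead applies the sieve lemma (Lemma~\ref{thm-sieve}, imported from \cite{blomer2023unipotent}) to $\mathcal{S}=\Lambda\cap\mathcal{R}$: this is a sieve that restricts to integers without small prime factors, verified via the lattice count, and the Sato--Tate law enters through the exponential factor $\exp\bigl(\sum_{\ell\leq z}(|\lambda_f(\ell)|+|\lambda_g(\ell)|)/\ell\bigr)\asymp(\log z)^{16/(3\pi)}$, which together with the $(\log z)^{-2}$ in the sieve bound produces the exponent $\tfrac{16}{3\pi}-2$. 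Your sketch names the right constant but does not supply the argument that actually delivers it, and the ingredient you omit — the lattice count feeding into the sieve's error term $Y$ — is again exactly the place where $d\geq 4$ and $(l_1+l_2)$ enter.

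In short, you have the right shape of bound and the right constants, but the two load-bearing tools of the proof — the short-vector lower bound for $\Lambda$, and the sieve lemma of \cite{blomer2023unipotent} — are replaced by mechanisms (a gcd estimate, a single-prime-stripping step) that do not do the required work.
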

	
	The rest of this section is devoted to the proof. 

\subsection{Geometry of numbers}
Before we begin with the analysis, we state the following well-known result from the geometry of numbers.
\begin{lem} \label{lattice}
Let $\Lambda  \subseteq \Bbb{Z}^2$ be a lattice of covolume $q$ and shortest nonzero vector of length $s$. 

{\rm (a)} Let $M, N \geq 1$ and $\mathcal{R} =  \{(x, y) \in \Bbb{R}^2 \mid M \leq x < 2M ,   N \leq y  < 2N\} $. Suppose that $\Lambda$ is given by 
\begin{equation}\label{form}
   \Lambda = \{(m, n) \in \Bbb{Z}^2 \mid a m \equiv b n \, (\text{{\rm mod }} q)\}
   \end{equation} with some $(ab, q) = 1$. 
Then
$$\sum_{(m, n) \in \Lambda \cap \mathcal{R}} 1 = \frac{MN}{q} + O\Big(1+\min\Big(\min(M, N) + \frac{M+N}{q}, \frac{M+N}{s}\Big)\Big).$$

{\rm (b)} Let $ T \geq 1$ and $\mathcal{R}\subseteq \Bbb{R}^2$ be contained in a ball of radius $T$ about the origin. Then
$$\sum_{0 \not= (m, n) \in \Lambda \cap \mathcal{R}} 1 \ll \frac{T^2}{q} + \frac{T}{s}.$$
\end{lem}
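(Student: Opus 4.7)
The plan is to derive the two error terms in part (a) by two independent arguments and combine them via the minimum, and to handle part (b) by reducing to a direct ball-count after choosing a reduced basis.

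For the first piece $\min(M,N) + (M+N)/q$ of the error in (a), I would slice. Since $(b,q)=1$, for each $m \in [M,2M) \cap \Zz$ the congruence $am \equiv bn \mods{q}$ pins down $n$ modulo $q$, so there are $N/q + O(1)$ admissible $n \in [N,2N)$. Summing over the $M + O(1)$ integers $m$ yields $MN/q + O(M + N/q + 1)$; interchanging the roles of $m$ and $n$ gives the symmetric estimate $MN/q + O(N + M/q + 1)$. Taking the smaller of the two recovers the first error piece.

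For the second piece $(M+N)/s$ in (a), I would choose a Gauss-reduced basis $v_1,v_2$ of $\Lambda$ with $|v_1|=\lambda_1(\Lambda)=s$. Minkowski's second theorem in dimension two gives $\lambda_1\lambda_2 \asymp q$, so $|v_2|=\lambda_2 \asymp q/s$, and Gauss-reducedness forces the angle between $v_1,v_2$ to be bounded away from $0$ and $\pi$ by an absolute constant. Through the isomorphism $\Zz^2 \to \Lambda$, $(k,l)\mapsto kv_1+lv_2$, counting $\Lambda \cap \mathcal{R}$ becomes counting integer points in the planar parallelogram $B=(v_1\,v_2)^{-1}\mathcal{R}$. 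This parallelogram has area $MN/q$, and its perimeter is at most $\|(v_1\,v_2)^{-1}\|_{\mathrm{op}}\cdot 2(M+N)\ll (M+N)/s$, the operator-norm bound following from reducedness, which forces the smallest singular value of $(v_1\,v_2)$ to be $\asymp \lambda_1$. The classical $\text{area}+O(\text{perim}+1)$ estimate for integer points in convex planar regions then produces $MN/q + O(1+(M+N)/s)$.

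For part (b), the same reduced-basis change of variables maps $\mathcal{R}\subseteq B_T(0)$ into a region contained in a box of half-widths $\ll T/\lambda_1$ and $\ll T/\lambda_2$. Hence the number of nonzero integer points in the preimage is at most $(1+T/\lambda_1)(1+T/\lambda_2)-1\ll T^2/(\lambda_1\lambda_2) + T/\lambda_1 + T/\lambda_2$. Using $\lambda_1\lambda_2\asymp q$, together with $\lambda_2 \asymp q/s$ and $s \ll \sqrt{q}$ (Minkowski again), we obtain $T/\lambda_2 \ll Ts/q \leq T/s$, collapsing the bound to $T^2/q + T/s$. The only nontrivial ingredient, and the step I would write out most carefully, is the Lipschitz control of the inverse basis matrix by $1/\lambda_1$ via Gauss-reducedness; this is what rescues the $1/s$ rather than the much weaker $1/\lambda_2 \asymp s/q$ in the perimeter of $B$, and it is also what prevents any dependence on the \emph{particular} form \eqref{form} of $\Lambda$ from surfacing in the second error bound.
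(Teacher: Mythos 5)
Your proof is correct and follows essentially the same two routes as the paper: in part (a), the slicing argument for the first error term matches the paper's computation verbatim, while for the $(M+N)/s$ term you spell out the reduced-basis/perimeter estimate that the paper simply cites from Schmidt; in part (b) you use the same Davenport-type coordinate bound from a reduced basis. The only minor difference is that in (b) the paper bounds $T/\|v_2\|\le T/s$ directly from $\|v_2\|\ge s$ (the trivial fact that every nonzero lattice vector has length at least $s$), whereas you take the slightly longer detour through $\lambda_2\asymp q/s$ and $s\ll\sqrt{q}$ -- both are valid, and the paper's shortcut avoids needing Minkowski's first theorem at all.
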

	
\begin{proof} (a) Since $\mathcal{R}$ is contained in a ball of radius $O(M+N)$, the asymptotic formula  $$\sum_{(m, n) \in \Lambda \cap \mathcal{R}} 1 = \frac{MN}{q}  + O\Big(1 + \frac{M+N}{s}\Big)$$ is a classical fact that holds without the assumption \eqref{form}, see e.g. \cite[Section 2]{Schmidt}.  Alternatively we may without loss of generality assume $M \leq N$ and compute
$$\sum_{(m, n) \in \Lambda \cap \mathcal{R}} 1 = \sum_{M \leq m < 2M} \sum_{\substack{N \leq n < 2N\\ n \equiv \bar{b} a m\, (\text{mod } q)}}1 =  \sum_{M \leq m < 2M} \Big(\frac{N}{q} + O(1)\Big) = \frac{MN}{q} + O\Big( M + 1 + \frac{N}{q}\Big).$$
This concludes the proof. 

(b)  There exists a $\Bbb{Z}$-basis $v_1, v_2$ of $\Lambda$ such that   $w = \alpha v_1 + \beta v_2 \in\Lambda$, $\alpha, \beta  \in \Bbb{Z}$, implies $\alpha	 \ll \| w \|/\| v_1\|$, $\beta \ll \| w \|/\| v_2\|$, see e.g.\  \cite[Lemma 5]{davenport}. By Hadamard's inequality we have $\| v_1\| \| v_2\| \geq q$ and clearly $\min(\| v_1\|, \| v_2 \|) \geq s$, so that 
$$\#\{(\alpha, \beta) \in \Bbb{Z}^2 \setminus \{0\} \mid  \alpha v_1 + \beta v_2  \in \mathcal{R}\} \ll \Big(1 + \frac{T}{\| v_1\|}\Big)\Big(1 + \frac{T}{\| v_2\|}\Big) - 1 \ll \frac{T^2}{q} + \frac{T}{s}$$
as desired. \end{proof}

We define the lattice  
\begin{equation}\label{deflambda}
\Lambda = \{(m,n)\in \mathbb{Z}^2\mid l_1m\equiv \xi l_2n\, (\text{mod }q)\},	
\end{equation}
and for later purposes we make the	following two observations: the covolume of $\Lambda$ is $q$ and the shortest nonzero vector of $\Lambda$ is at least of length
$$\frac{ q^{1/d} }{2\max(l_1, l_2)} .$$
Indeed, any nonzero vector $(m, n) \in \Lambda$ satisfies 
$(l_1m)^d \equiv (l_2n)^d \, (\text{mod } q)$. If $$\max(|n|, |m|) < \frac{q^{1/d} }{2\max(l_1, l_2)},$$
 then the congruence is an equality of integers, and hence $l_1m = \pm l_2n$, a contradiction to $\xi \not= \pm 1$.

\subsection{Sieves and Sato--Tate}\label{sec52}

	We quote the following application of sieve theory from  \cite[Section 3]{blomer2023unipotent}.

	For $X, Y \gg 1$ and $q \in \mathbb{N}$ we denote by $\mathcal{C}_q(X, Y)$ the set of  subsets   $\mathcal{S} \subseteq \mathbb{N}^2$ contained in a ball of radius $X^{100}$ about the origin and satisfying  
\begin{equation}\label{sieve-cond}
\sum_{\substack{ (n_1,n_2) \in \mathcal{S}\\ d_1 \mid n_1, d_2 \mid n_2}} 1 = \frac{X}{d_1d_2} + O(Y)
\end{equation}
for all $(d_1d_2, q) = 1$. 

\begin{lem}\label{thm-sieve} Let $\mathcal{S} \in \mathcal{C}_q(X, Y)$. 
Let $\lambda_1, \lambda_2$ be two non-negative multiplicative functions satisfying $\lambda_j(n)  \leq \tau_k(n)$ for some   $k \in \mathbb{N}$. Fix $0 < \gamma < 1/2$ and let $ z = 2+X^{\gamma}$. Then for any $\varepsilon > 0$ we have 
$$\sum_{\substack{ (m, n) \in \mathcal{S}\\ (mn, q) = 1}} \lambda_1(m) \lambda_2(n)  \ll_{k,  \gamma,\varepsilon} \frac{X}{(\log z)^2}
\exp\Big( \sum_{\substack{\ell \leq z\\\ell \text{ {\rm     prime}}}} \frac{\lambda_1(\ell) + \lambda_2(\ell)}{\ell} \Big) 
+  \frac{X^{1+\varepsilon}}{ z^{1/4}} + X^{\varepsilon} Y z^3.$$

\end{lem}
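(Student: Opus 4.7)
The statement is a two-variable Selberg/Shiu-type upper bound sieve estimate for sums of nonnegative multiplicative functions bounded by $\tau_k$, applied to a set $\mathcal{S}$ obeying the joint equidistribution hypothesis \eqref{sieve-cond}. The three terms on the right correspond, respectively, to the sieved main term, a Rankin-trick residue from truncating the rough part, and the accumulated error from the $O(Y)$ in \eqref{sieve-cond}. My plan is to follow the classical template of Shiu's theorem, adapted to two variables and to the abstract distribution hypothesis rather than uniform distribution on $[1,X]$.

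For each $n$, perform the smooth/rough factorization $n = n^{\flat} n^{\sharp}$, where $n^{\flat} = \gcd(n, P(z)^{\infty})$ is the $z$-smooth part (with $P(z) = \prod_{\ell \leq z,\ \ell\nmid q}\ell$) and $n^{\sharp}$ is the $z$-rough part; by multiplicativity $\lambda_j(n) = \lambda_j(n^{\flat})\lambda_j(n^{\sharp})$. Pairs $(m,n) \in \mathcal{S}$ with $m^{\sharp}n^{\sharp}$ atypically large are controlled by Rankin's trick using $\lambda_j \leq \tau_k$ and the classical estimate
\[
\sum_{\substack{n \leq X^{100}\\ p\mid n\,\Rightarrow\, p>z}} \tau_k(n) \ \ll_k\ X^{100+\varepsilon}\, z^{-\eta}
\]
for suitable $\eta>0$; balancing $\eta$ against the dyadic range yields a tail contribution of $O(X^{1+\varepsilon}/z^{1/4})$, the second error term. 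For the remaining (small-rough-part) contribution, insert a Selberg-type upper-bound weight $\varrho^+(d) \geq \mathbf{1}_{d\mid n^{\flat}}$ supported on $z$-smooth divisors $d \leq D$, with $D := z^{3/2}$, applied independently to the $m$- and $n$-variable.

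Expanding and using \eqref{sieve-cond} on the inner count yields
\[
\sum_{\substack{d_1,d_2\leq D\\ z\text{-smooth},\,(d_1d_2,q)=1}} \varrho^+_1(d_1)\varrho^+_2(d_2)\Bigl(\frac{X}{d_1d_2}+O(Y)\Bigr).
\]
The accumulated error is $O(Y D^2)=O(Yz^3)$, producing the third term in the bound. The main term factors as $X/G_{z,1} G_{z,2}$ times an Euler product, where $G_{z,j}$ is the standard Selberg quadratic form; using the local identity
\[
F_{j,\ell} := \sum_{a\geq 0}\frac{\lambda_j(\ell^a)}{\ell^a} = (1-\ell^{-1})^{-1}\Bigl(1+\frac{\lambda_j(\ell)-1}{\ell}+O_k\bigl(\ell^{-2}\bigr)\Bigr),
\]
taking logarithms, and applying Mertens' theorem $\prod_{\ell\leq z}(1-\ell^{-1})\asymp (\log z)^{-1}$, one obtains after the bookkeeping
\[
\frac{X}{(\log z)^2}\exp\!\Bigl(\sum_{\ell\leq z}\frac{\lambda_1(\ell)+\lambda_2(\ell)}{\ell}\Bigr),
\]
which is the stated main term.

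The main obstacle is the precise emergence of the $(\log z)^{-2}$ prefactor: a naive Möbius expansion would lose logarithmic factors, and one genuinely needs the Selberg quadratic-form optimization on each variable, producing $G_{z,j}^{-1}\asymp (\log z)^{-1}$ per variable, compatibly with the Euler-product computation. A subsidiary point is to verify that the truncation levels $D=z^{3/2}$ and the Rankin-trick cutoff at $z^{1/4}X^\varepsilon$ are simultaneously compatible with the three error budgets in the statement; this is a bookkeeping exercise but must be tuned carefully. Once these are handled, combining Steps 1--3 gives the claimed bound.
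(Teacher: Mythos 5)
First, note that the paper does not prove Lemma~\ref{thm-sieve} at all: it is quoted verbatim from \cite[Section 3]{blomer2023unipotent}, so the only comparison available is with the sieve argument there. Your general template (smooth/rough factorization, an upper-bound sieve, Rankin's trick) is in the right family, but two of your key steps do not hold up. The Rankin step rests on a false estimate: the sum of $\tau_k(n)$ over $z$-rough $n\le X^{100}$ is $\gg_k X^{100}/\log X$ (primes alone contribute this much), so it is certainly not $\ll X^{100+\varepsilon}z^{-\eta}$; roughness never produces a power saving. In the present regime $z=2+X^{\gamma}$ with $\gamma$ fixed, the correct (and needed) observation about the rough parts is the opposite one: $\Omega(m^{\sharp})\le 100\log X/\log z\ll_{\gamma}1$, hence $\lambda_j(m^{\sharp})\le k^{\Omega(m^{\sharp})}\ll_{k,\gamma}1$, i.e.\ the rough parts are harmless pointwise and there is nothing to discard there. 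Nor can you salvage the step by discarding pairs whose \emph{smooth} part is large: $z$-smooth numbers of size a fixed power of $X$ have constant density (Dickman $\rho$ at bounded argument), so that event is not power-saving rare either, and with weights of size up to $\tau_k$ its contribution is of the same order as the main term — it must be captured by the main term, not dumped into $X^{1+\varepsilon}z^{-1/4}$. So the mechanism you assign to the second error term does not exist; the $z^{-1/4}$ saving has to come from a genuinely sparse configuration (for instance large prime-power/powerful divisors supported on primes $\le z$), not from rough or smooth tails, and no tuning of $D$ and the cutoff fixes this.

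The main-term computation has a related gap. In the display where you apply \eqref{sieve-cond}, the weights $\lambda_1,\lambda_2$ have vanished: \eqref{sieve-cond} counts unweighted points, so to make the Euler factors $F_{j,\ell}=\sum_a\lambda_j(\ell^a)\ell^{-a}$ appear you must first fix (at least partially) the smooth parts $a_1\mid m$, $a_2\mid n$ and weight by $\lambda_1(a_1)\lambda_2(a_2)$; every modulus pair $(a_1d_1,a_2d_2)$ then costs $O(Y)$ in \eqref{sieve-cond}. Your accounting $O(YD^2)=O(Yz^3)$ counts only the Selberg divisors $d_i\le D$ and ignores the $a_i$ altogether: summing over all smooth parts up to $X^{100}$ makes the accumulated error astronomically larger than $X^{\varepsilon}Yz^3$, while truncating the $a_i$ at $z^{O(1)}$ reopens the large-smooth-part problem just described. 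This trade-off — converting the multiplicative weights into the Euler product while keeping \emph{all} moduli used in \eqref{sieve-cond} of size $z^{O(1)}$, with the sparse prime-power configurations split off to produce $X^{1+\varepsilon}z^{-1/4}$ and part of $X^{\varepsilon}Yz^{3}$ — is precisely the content of the lemma, and the proposal does not address it. (Two smaller points: the Selberg weight should majorize the indicator that the cofactors $m/a_1,n/a_2$ have no prime factor $\le z$ coprime to $q$, since \eqref{sieve-cond} is only available for moduli prime to $q$, rather than ``$\varrho^+(d)\ge\mathbf{1}_{d\mid n^{\flat}}$''; and the bounded-radius hypothesis on $\mathcal{S}$, which you never invoke, is exactly what makes the rough parts have $O_{\gamma}(1)$ prime factors.) As it stands the argument needs the bounded-$\Omega$ observation, a separate treatment of the powerful/prime-power divisors, and a correct organization of the sum over smooth parts — i.e.\ essentially the argument of \cite[Section 3]{blomer2023unipotent}, which this paper simply cites.
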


We estimate the sum over $\ell$ in the preceding formula in the case when $\lambda_1 =| \lambda_f|$, $\lambda_2 = |\lambda_g|$ using the Sato--Tate conjecture which is a now theorem  for holomorphic non-CM newforms $f, g$ (see \cite{clozel2008automorphy}, \cite{taylor2008automorphy}, \cite{harris2010family} as well as \cite{newton2021symmetric}). By \cite[Thm.\ 1.1]{Thorner}  we have for  $x \rightarrow \infty$ that 
\begin{align*}
			\frac{1}{\pi(x)}\sum_\stacksum{\ell \leq x}{\ell\text{ prime} }|\lambda_{f}(\ell)|&= \frac{2}{\pi}\int_0^\pi 2|\cos t| \sin^2 t \, dt+O_f\Big(\frac{\log\log x}{(\log x)^{1/2}}\Big)\\
			&=\frac{8}{3\pi}+O_f\Big(\frac{\log\log x}{(\log x)^{1/2}}\Big).
		\end{align*}
		By partial summation this gives
		  		\begin{equation}\label{sato}
			\sum_{\substack{\ell\leq z \\ \ell \text{ prime}}}\frac{|\lambda_{f}(\ell)| +|\lambda_{g}(\ell)| }{\ell} =  \frac{16}{3\pi}\log\log z+O_f(1).
		\end{equation}

\subsection{The key bound}

We now state 	the  key bound. We recall the definition \eqref{deflambda} of $\Lambda$. 
	
	
\begin{prop}\label{bound1} Let $N, M \geq 1$, $q\in \Bbb{N}$, $\xi^d \equiv 1 \, (\text{\rm mod } q)$ for some $d \geq 4$, $\xi \not\equiv \pm 1\, (\text{\rm mod } q)$, $(l_1l_2, q) = 1$, $\varepsilon > 0$, $W_1, W_2$ two fixed smooth test functions with support in $[1, 2]$. 

{\rm (a)} Suppose $MN \geq q$. Then
\begin{displaymath}
\begin{split}
 &\sum_{ \substack{(m, n) \in \Lambda\\ (mn, q) = 1}} \lambda_f(m) \lambda_g(n)W_1\Big(\frac{m}{M}\Big)W_2\Big(\frac{n}{N}\Big) \\
 &\ll_{f, g,  W_1, W_2,  \varepsilon} \frac{MN}{q} \Big(\log \Big(2 + \frac{MN}{q}\Big)\Big)^{\frac{16}{3\pi} -2}  +   
 (MNq)^{\varepsilon}\Big(\sqrt{\frac{ MN (l_1 + l_2)}{q^{1/d}}}\Big) .
 \end{split}
 \end{displaymath}
 
 {\rm (b)} In general, we have the (coarse) bound
 \begin{displaymath}
\begin{split}
 &\sum_{ \substack{(m, n) \in \Lambda\\ (mn, q) = 1}} \lambda_f(m) \lambda_g(n)W_1\Big(\frac{m}{M}\Big)W_2\Big(\frac{n}{N}\Big)\\
 & \ll_{f, g,  W_1, W_2 ,\varepsilon}  (MN)^{\varepsilon} \min\Big( \frac{(M+N)^2}{q} + \frac{(M+N)(l_1+l_2)}{q^{1/d}}, \frac{MN}{q} + \min(M, N) \Big) .
 \end{split}
 \end{displaymath}

\end{prop}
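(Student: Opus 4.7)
Both parts reduce to lattice-point counts in $\Lambda$ weighted by Hecke eigenvalues, using two geometric facts established just before the statement: $\Lambda$ has covolume $q$, and its shortest nonzero vector has length at least $q^{1/d}/(2\max(l_1,l_2))$. The latter critically uses $\xi \not\equiv \pm 1 \pmod q$ together with $\xi^d \equiv 1 \pmod q$, since a shorter nonzero vector would force $l_1 m = \pm l_2 n$ as integers and hence $(\xi\mp 1) l_2 n \equiv 0 \pmod q$. The analytic inputs are \lemref{thm-sieve} combined with Sato--Tate \eqref{sato} for part (a), and Deligne's bound $|\lambda_f(m)\lambda_g(n)| \ll (MN)^\varepsilon$ for part (b).

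\textbf{Part (b).} The two expressions inside the $\min$ correspond to the two parts of \lemref{lattice}, each combined with Deligne. The first, $(M+N)^2/q + (M+N)(l_1+l_2)/q^{1/d}$, results from \lemref{lattice}(b) with $T = O(M+N)$, invoking the shortest-vector lower bound on $\Lambda$. The second, $MN/q + \min(M,N)$, results from \lemref{lattice}(a) applied directly to $\Lambda$. Multiplying by the pointwise bound $(MN)^\varepsilon$ gives both terms.

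\textbf{Part (a).} I would apply \lemref{thm-sieve} to a smoothly weighted version $\mathcal{S}$ of $\Lambda\cap([M,2M]\times[N,2N])$ with $\lambda_1=|\lambda_f|$ and $\lambda_2=|\lambda_g|$ (both bounded by a fixed divisor function by Deligne). To check $\mathcal{S}\in\mathcal{C}_q(X,Y)$ I apply \lemref{lattice}(a) to the sub-lattice $\Lambda\cap(d_1\mathbb{Z}\times d_2\mathbb{Z})$, which for $(d_1d_2,q)=1$ has covolume $d_1d_2 q$ and inherits the shortest-vector bound of $\Lambda$. This yields $X=MN/q$ and $Y\ll 1+(M+N)(l_1+l_2)/q^{1/d}$, sharpened to $Y\ll 1+\min(M,N)$ via the alternative bound in \lemref{lattice}(a). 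Feeding this into \lemref{thm-sieve} and converting the exponential factor via \eqref{sato} into $(\log z)^{16/(3\pi)}$ gives
$$\sum_{(m,n)\in\mathcal{S},\,(mn,q)=1} |\lambda_f(m)\lambda_g(n)| \ll X(\log z)^{16/(3\pi)-2} + X^{1+\varepsilon}/z^{1/4} + X^\varepsilon Y z^3.$$
In the regime $Y\le X$, choosing $z=X^\gamma$ for a small fixed $\gamma>0$ yields the stated main term and absorbs the other two errors. In the complementary regime $Y>X$, I would fall back on the direct count $\#(\Lambda\cap\text{box}) \ll X+Y$ together with the pointwise Deligne bound. In both regimes the residual error takes the shape $X^\varepsilon\sqrt{XY}$ (up to factors of $q^\varepsilon$), which matches the claimed $(MNq)^\varepsilon\sqrt{MN(l_1+l_2)/q^{1/d}}$ on the support $MN\le q^{2+\varepsilon}$ after using the sharpened form of $Y$ when one of $M,N$ is large.

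\textbf{Main obstacle.} The delicate point is choosing the sieve parameter $z$ so that both the universal sieve remainder $X^{1+\varepsilon}/z^{1/4}$ and the level-of-distribution error $Yz^3$ stay below the target $\sqrt{XY}$; a natural choice is $z\asymp(X/Y)^{1/6}$, but one must split into a handful of sub-cases according to the relative sizes of $X$, $Y$, and $M+N$ versus $q$, picking between the two available bounds on $Y$ to avoid losses. The entire argument rests on the shortest-vector lower bound $q^{1/d}/(l_1+l_2)$, which would collapse without the hypothesis $\xi \not\equiv \pm 1 \pmod q$; the smoothing of the box $[M,2M]\times[N,2N]$ into an element of $\mathcal{C}_q$ is standard and causes no additional difficulty.
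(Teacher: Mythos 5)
Your overall framing is right (sieve plus Sato--Tate for the logarithmic main term, the shortest-vector bound $\gg q^{1/d}/\max(l_1,l_2)$ driving the error), and your part (b) matches the paper. But there is a genuine gap in part (a): you claim the level-of-distribution error can be ``sharpened to $Y\ll 1+\min(M,N)$ via the alternative bound in \lemref{lattice}(a).'' That bound is $O\bigl(1+\min(M,N)+(M+N)/q\bigr)$; the extra $(M+N)/q$ cannot be dropped, and it is precisely the source of difficulty. In the unbalanced regime $M\asymp 1$, $N\asymp q^2$ (which occurs after the dyadic decomposition in the proof of \propref{final-error}, since only $MN\ll q^2$ is imposed), one has $Y\asymp N/q\asymp q$ while $X=MN/q\asymp q$, so the sieve remainder $X^{\varepsilon}Yz^3\asymp q^{1+3\gamma+\varepsilon}$ exceeds the target $\sqrt{MN(l_1+l_2)/q^{1/d}}\gg q^{7/8}$ (for $d\geq 4$). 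Your fallback of a ``direct count when $Y>X$'' does not trigger here because $Y\asymp X$, and even if it did, the count $X+Y\asymp q$ is still too large; and your ``natural choice $z\asymp(X/Y)^{1/6}$'' cannot repair this either.

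The paper closes this gap with an additional input you do not mention: Voronoi summation (\lemref{Voronoi}) in the long variable. For each $m$, one writes the sum over $n\equiv l_1\overline{\xi l_2}m\ (\mathrm{mod}\ q)$ via additive characters and applies Voronoi together with Weil's bound for Kloosterman sums, yielding the complementary estimate $\ll (q^{1/2}M)^{1+\varepsilon}$. Taking the geometric mean of this with the residual $\asymp (MNq)^{4\gamma}N/q$ from the sieve gives $\asymp (MNq)^{4\gamma}\sqrt{MN/q^{1/2}}$, which is absorbed into $\sqrt{MN(l_1+l_2)/q^{1/d}}$ precisely because $d\geq 4$. Without this second tool, the argument does not close, so you should not present the ``residual error $X^{\varepsilon}\sqrt{XY}$'' as matching the claimed bound --- it does not, in the unbalanced range, under the correct form of $Y$.
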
	




\begin{proof}  Assume without loss of generality that $M \leq N$. 

(a) We apply Lemma \ref{thm-sieve} with   $$\lambda_1 =  |\lambda_f|, \quad \lambda_2 = |\lambda_g|,  \quad k = 2, \quad     \mathcal{S} = \Lambda \cap \mathcal{R}$$ where\begin{displaymath}
\begin{split}
&\mathcal{R} = \{(m, n) \in \Bbb{R}^2 \mid M  \leq m < 2M ,   N \leq n  < 2N\} \\
\end{split}
\end{displaymath}
and $0<\eps <1/25$ fixed, but sufficiently small . 
 Then for $(d_1d_2, q) = 1$ the sublattice $$\Lambda_{d_1, d_2} = \{(m, n) \in \Lambda : d_1 \mid m, d_2 \mid n\}$$ has the following properties: it is of the form \eqref{form}, its covolume is $q d_1d_2$ and its shortest nonzero vector is at least of length $\frac{ q^{1/d} }{2\max(l_1, l_2)}$. Thus we confirm \eqref{sieve-cond} with
$$X = \frac{MN}{q}, \quad Y = 1 + \min\Big(\min(M, N) + \frac{M+N}{q}, \frac{(M+N)(l_1+l_2)}{q^{1/d}}\Big)$$
so that $X, Y \geq 1$.

Together with \eqref{sato} we conclude from Lemma \ref{thm-sieve}  (choosing $\gamma=8\eps$ in that Lemma and remembering that $M\leq N$) that
\begin{multline}\label{prelim-error}
	\sum_{\substack{M \leq m < 2M \\ N \leq n < 2N \\ l_1 m \equiv \xi l_2 n\, (\text{mod } q)\\ (mn, q) = 1}} |\lambda_f(m) \lambda_g(n)|\ll_{\eps,\gamma} \frac{MN}{q}\log  \Big(2+\frac{MN}{q}\Big)^{\frac{16}{3\pi} -2}+(\frac{MN}q)^{1-\gamma/4+\eps}\\
+ \Big(\frac{MN}{q}\Big)^{\eps+3\gamma} \Big(1 + \min\Big(M + \frac{N}{q}, \frac{N (l_1 + l_2)}{q^{1/d}}\Big)\Big)
\end{multline}
and choose $\gamma=8\eps$ so that the second term on the righthand side of \eqref{prelim-error} is bounded by the first.

We have
$$\min\Big(M + \frac{N}{q}, \frac{N (l_1 + l_2)}{q^{1/d}}\Big)\leq \sqrt{\frac{MN (l_1 + l_2)}{q^{1/d}}}+\frac{N}q$$ so that (since $25\eps< 1$ and $MN\geq q$) the right-hand side of \eqref{prelim-error} is bounded by
$$\ll_\eps \frac{MN}{q} \log  \Big(2+\frac{MN}{q}\Big)^{\frac{16}{3\pi} -2} +     ( {MN} q)^{25\eps} \Big(\sqrt{\frac{ MN (l_1 + l_2)}{q^{1/d}}} + \frac{N}{q} \Big).$$

This last bound is sufficient except when\begin{equation}
	\label{upperboundonM}
	\frac{N}q\geq \sqrt{\frac{ MN (l_1 + l_2)}{q^{1/d}}}\Longleftrightarrow M\leq \frac{N}{q^{2-1/d}(l_1+l_2)}.
\end{equation}
In case \eqref{upperboundonM} holds,  we complement the above argument with an application of the Voronoi summation formula  on the $n$ variable. 
Writing $\mu = \bar{l}_1 l_2 \xi m$   (in particular $(\mu, q) = 1$), and detecting the congruence condition $n \equiv \mu\mods q$ with additive characters, we have
$$\sum_{n \equiv \mu \, (\text{mod }q)}\lambda_g(n) W_2\Big(\frac{n}{N}\Big)  =\frac{1}{q} \sum_{d \mid q}  \sum_{\substack{b\, (\text{mod } d)\\ (b, d) = 1}} e\Big( \frac{\mu b}{d}\Big) \sum_{n}\lambda_g(n) e\Big(- \frac{bn}{d}\Big)W_2\Big(\frac{n}{N}\Big).$$ 
Applying the Voronoi summation formula (cf. Lemma \ref{Voronoi}), this equals
	$$ \frac{1}{q} \sum_{d \mid q}   \eps(g) \frac{N}{dR^{1/2}} \sum_n \lambda_g(n) S(\mu, n\overline{R}, d) \widetilde{W}_2\Big( \frac{Nn}{d^2R}\Big).$$ 
	Using Weil's bound for  Kloosterman sum (see \eqref{Weilbound}) and estimating everything else trivially, we obtain for any $\eta>0$ that
\begin{multline}\label{unbalancedbound}
	\sum_{ \substack{ l_1 m \equiv \xi l_2 n\, (\text{{\rm mod }} q)\\ (mn, q) = 1}} \lambda_f(m) \lambda_g(n) W_1\Big(\frac{m}{M}\Big)W_2\Big(\frac{n}{N}\Big)  \\
\ll_\eta d(q)\sum_{m \asymp M} |\lambda_f(m)| \Big(\frac{1}{q} \max_{d \mid q}  \frac{N}{d} \cdot  \frac{d^2}{N} d^{1/2}\Big)^{1+\eta/2} \ll_\eta (q^{1/2} M)^{1+\eta}.
\end{multline}

Under \eqref{upperboundonM}, we have
$$q^{1/2}M=\sqrt{qMM}\leq \sqrt{\frac{MN}{q^{1-1/d}(l_1+l_2)}}\leq \frac{1}{\sqrt{q^{1-2/d}}}\sqrt{\frac{MN(l_1+l_2)}{q^{1/d}}}.$$
So since $d\geq 4$, we see that up to taking $\eta$ sufficiently small depending on $\eps$ we see that the lefthand side of \eqref{unbalancedbound} is bounded by
$$(MNq)^{25\eps}\sqrt{\frac{ MN (l_1 + l_2)}{q^{1/d}}}.$$
This concludes the proof of $(a)$.


(b) Here we estimate trivially using Lemma \ref{lattice}(b) to obtain
\begin{displaymath}
\begin{split}
\sum_{\substack{ (m, n) \in \Lambda\\ (mn, q) = 1}} \lambda_f(m) \lambda_g(n)W_1\Big(\frac{m}{M}\Big)W_2\Big(\frac{n}{N}\Big)& \ll_\eps (MN)^{\varepsilon} \sum_{ \substack{(m, n) \in \Lambda \\ m, n \ll M+N}} 1\\
& \ll_\eps (MN)^{\varepsilon} \Big(\frac{N^2}{q} + \frac{N(l_1+l_2)}{q^{1/d}}\Big).  
 \end{split}
 \end{displaymath}
Alternatively,  we can also estimate trivially
\begin{displaymath}
\begin{split}
\sum_{\substack{ (m, n) \in \Lambda \\ (mn, q) = 1}} \lambda_f(m) \lambda_g(n)W_1\Big(\frac{m}{M}\Big)W_2\Big(\frac{n}{N}\Big)& \ll_\eps (MN)^{\varepsilon} \sum_{m \ll M} \sum_{\substack{n \ll N \\ n \equiv l_1 \overline{\xi l_2}m \, (\text{mod } q)}} 1\\
& \ll_\eps (MN)^{\varepsilon} M \Big( \frac{N}{q} + 1\Big).
 \end{split}
 \end{displaymath}
This completes the proof. 
\end{proof}


\subsection{Endgame: Proof of Proposition \ref{final-error}}

With Proposition \ref{bound1} in hand, we can complete the estimation of the error term. 
Applying a smooth partition of unity to both the $n$ and $m$-sum (Lemma \ref{partition of unity}),  it suffices to estimate
\begin{displaymath}
\begin{split}
&\sum_{N = 2^{\nu}} \sum_{M= 2^{\mu}} \frac{1}{\sqrt{MN}} \Big(1 + \frac{MN}{q^2}\Big)^{-1}\Big| \sum_{\substack{ l_1 m \equiv \xi l_2 n\, (\text{{\rm mod }} q)\\ (mn, q) = 1}} \lambda_f(m) \lambda_g(n)W_1\Big(\frac{m}{M}\Big)W_2\Big(\frac{n}{N}\Big) \Big|\\
 \end{split}
 \end{displaymath}
where $\mu, \nu$ run through non-negative integers. 

Let $L = \log q/\log 2$ and let us first assume $MN \gg q$. Then by part (a) of Proposition \ref{bound1} we need to bound
\begin{displaymath}
\begin{split}
& \ll \sum_{N = 2^{\nu}} \sum_{M= 2^{\mu}}   \Big(1 + \frac{MN}{q^2}\Big)^{-1} \Big[ \frac{\sqrt{MN}}{q} \Big(\log \Big(2+\frac{MN}{q}\Big)\Big)^{\frac{16}{3\pi} -2} + 
 (MNq)^{\varepsilon}\Big(\sqrt{\frac{ l_1 + l_2}{q^{1/d}}}\Big)\Big] .
 \end{split}
 \end{displaymath}
 Clearly the second term contributes
 $$  \ll q^{ - \frac{1}{2d} + \varepsilon} \sqrt{l_1 + l_2}.$$
 We  estimate the first    by computing geometric series (and their derivatives). The first term is at most
\begin{displaymath}
\begin{split}
& \ll  \sum_{\mu + \nu  \leq 3L/2} \frac{2^{(\mu+\nu)/2}}{q} +  (\log q)^{\frac{16}{3\pi} -2} \Big(\sum_{3L/2 \leq \mu + \nu  \leq 2L} \frac{2^{(\mu+\nu)/2}}{q}  + \sum_{\mu + \nu \geq 2L} \frac{q}{2^{\mu + \nu}}\Big)\\
& \ll  q^{-1/4} (\log q)^2 +  (\log q)^{\frac{16}{3\pi} -2}\Big(L \sum_{3L/2 \leq \rho \leq 2L} \frac{2^{\rho/2}}{q} + \sum_{\rho \geq 2L} \frac{\rho q}{2^{\rho}}\Big) \ll (\log q)^{\frac{16}{3\pi} -1}.
\end{split}
\end{displaymath}

Let us now restrict to $MN \ll q$. 
By part (b) of Proposition \ref{bound1} and symmetry this contributes
\begin{displaymath}
\begin{split}
& \ll q^{\varepsilon}\underset{M \leq N, \, MN \leq q}{ \sum_{N = 2^{\nu}} \sum_{M= 2^{\mu}} } \frac{1}{\sqrt{MN}}  \min\Big( \frac{N^2}{q}, \frac{N(l_1+l_2)}{q^{1/d}}, M\Big)\\
& \ll q^{\varepsilon} \Big( \underset{M/N \geq q^{-1/d}, \, MN \leq q, \, M \leq N}{ \sum_{N = 2^{\nu}} \sum_{M= 2^{\mu}} }\frac{N^{3/2} M^{-1/2}}{q} + \frac{(N/M)^{1/2} (l_1 + l_2)}{q^{1/d}}\\&\ \hskip 6cm + \underset{M/N \leq q^{-1/d}, \, MN \leq q}{ \sum_{N = 2^{\nu}} \sum_{M= 2^{\mu}} } 
(M/N)^{1/2}\Big)\\
& \ll q^{\varepsilon} \Big( \frac{q^{\frac{1}{2} + \frac{3}{2d} }}{q} + \frac{q^{\frac{1}{2d}} (l_1 + l_2)}{q^{1/d}} + q^{-\frac{1}{2d}}\Big) \ll q^{-\frac{1}{2d} + \varepsilon} (l_1 + l_2)
 \end{split}
\end{displaymath}
since $d \geq 4$. This completes the proof.

\section{Generation of Hecke fields}\label{generation of hecke field section}

	 In this section, we show that, for $f$ satisfying \ref{non-quad}, $p$ odd and $\chi\in\Xipw$, the Hecke field $\Qq_f(\chi)$ is generated by the product $|L_f(\chi)|^2$ and a $p$-th root of unity as long as the conductor of $\chi$ is large enough.

For this we will use Theorem \ref{general l1l2 sum}
in the case $f=g$ for  $l_1=\ell^t$ a (fixed) prime power and $l_2=1$. 

In view of the shape of the main term in the asymptotic formula \eqref{startsum} and of Proposition \ref{AflLemma} we will need the following lemma.
\begin{lem}\label{v lemma}
	Let $f$ be a normalized non-CM newform. Define
\begin{align*}
	&\mcL=\mcL_{f}=\{\ell \text{ prime}: \ell\nmid pR, \ell\in 1+ p\mathbb{Z}_p^{\times} \text{ and }\lambda_f(\ell)\neq 0,\pm2\}.
\end{align*}
Then the set $\mcL$ has density  $1/p$ among the set of all primes.

For such $\ell$'s, the local parameters $\alpha_f(\ell)$ and $\beta_f(\ell)$ are distinct.
\end{lem}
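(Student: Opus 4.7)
The plan is to prove the two assertions of the lemma separately, both following quickly from standard theorems.

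For the distinctness of the local parameters, I would proceed by direct algebra. Since $\ell \nmid R$, we have $\chi_0(\ell) = 1$, so $\alpha_f(\ell)$ and $\beta_f(\ell)$ are the two roots of $X^2 - \lambda_f(\ell) X + 1$. Their discriminant is $\lambda_f(\ell)^2 - 4$, which vanishes exactly when $\lambda_f(\ell) = \pm 2$. The exclusion of $\pm 2$ built into the definition of $\mcL$ therefore already guarantees $\alpha_f(\ell) \neq \beta_f(\ell)$.

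For the density statement, my plan is to combine Dirichlet's theorem with a density-zero estimate for the three excluded values of $\lambda_f(\ell)$. By Dirichlet's theorem on primes in arithmetic progressions, the condition $\ell \equiv 1 \pmod p$ carves out a set of primes of density $1/\varphi(p) = 1/(p-1)$, and the additional condition $\ell \nmid pR$ excludes only finitely many primes. It then suffices to show that each of the sets $\{\ell : \lambda_f(\ell) = 0\}$ and $\{\ell : \lambda_f(\ell) = \pm 2\}$ has natural density zero among all primes. For the first, this is Serre's classical theorem on the vanishing of Hecke eigenvalues of non-CM holomorphic newforms. For the last two, I would invoke the Sato--Tate law for $f$, i.e.\ the theorem of Clozel--Harris--Shepherd-Barron--Taylor and Newton--Thorne already cited in the introduction: the Sato--Tate measure on $[-2, 2]$ is absolutely continuous with respect to Lebesgue measure, so it assigns mass zero to any single point, and in particular to $\{0\}$ and to $\{\pm 2\}$. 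Combining, $\mcL$ has density $1/(p-1)$ among all primes.

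I expect no real obstacle, since every ingredient is either elementary algebra or a theorem already invoked elsewhere in the paper; indeed Sato--Tate alone handles all three exclusions simultaneously, though Serre's older density-zero result provides a more elementary route for $\lambda_f(\ell) = 0$. The crucial input is that Sato--Tate is available for the class of newforms under consideration, which is precisely what is used in Section~\ref{f=g error term section} via \eqref{sato}.
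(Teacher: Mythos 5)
Your proposal is correct and follows essentially the same route as the paper: Dirichlet's theorem for the congruence condition, a density-zero estimate for the excluded Hecke-eigenvalue values, and the discriminant of $X^2-\lambda_f(\ell)X+1$ for the distinctness of the local parameters. The only cosmetic difference is that the paper cites Serre's theorem for all three excluded values $\lambda_f(\ell)\in\{0,\pm2\}$ (noting that his argument handles any finite set of values in $[-2,2]$), whereas you split the job between Serre for $0$ and Sato--Tate for $\pm2$; both are valid, as you observe.
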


\begin{proof}	
	The condition $\ell\nmid pR$ does not affect the density of $\mcL$. 
	By Dirichlet's theorem, the set of primes in $1+p\mathbb{Z}_p^{\times}$ has density $1/p$ among the set of all primes.	
	By the result of Serre \cite[p.\ 174]{serre1981quelques}, the number of prime $\ell$'s with $\lambda_{f}(\ell)=0$ or $\pm 2$ has density 0 (in fact this holds for the prime $\ell$'s such that $\lambda_{f}(\ell)$ does not equal at most finitely many numbers in $[-2,2]$). 
\end{proof} 

\begin{rem} If $(f_t)_{t\leq T}$ is any tuple of non-CM newforms,  the intersection $\bigcap_{t\leq T}\mcL_{f_t}$ has still density $1/p$. 
\end{rem}

\begin{rem}\label{remcongruence}
It is easy to see that for any $t \in \Bbb{N}$ not divisible by $p$ and any $\ell \in 1 + p\Bbb{Z}_p^{\times}$, the cogruence class $\ell^t\mods{p^h}$ 
	has order $p^{h-1}$ in $(\mathbb{Z}/p^{h}\mathbb{Z})^\times$. It follows that for all $\chi\in \Xipw$ of conductor $p^h$, the  complex number $\chi(\ell^t)$ is a primitive $p^{h-1}$-st root of unity. 
\end{rem}

\subsection{Non-vanishing of the second moment}
In the next proposition, in the case $F=\Qq_f$, we extend the Galois average in Theorem \ref{general l1l2 sum} to the trace sum over a subfield of $\Qq_f$.
	
	\begin{prop}\label{nonvan}	 Let $F=\mathbb{Q}_f$,  $c\in F\setminus\{0\}$   and $F_0$ be a subfield of $F$. Let 
	$$T_F=2\max(2, |\Hom_{\mathbb{Q}}(F,\mathbb{C})|).$$ 
	Given $\ell\in \mcL_{f}$ and  $1\leq t\leq T_F$, for $h\geq 2$ and $q=p^h$, we have   
	\begin{multline}
		\frac{\ell^{t/2}}{[F(\chi):F_0]}\Tr_{F(\chi)/F_0}(c|L_f(\chi)|^2\chi(\ell^t))
		=MT(f;q,\ell^t,F/F_0)+O((\log q)^{\frac{16}{3\pi}-1})\label{tracesumexpression}
	\end{multline}
	where
	$$MT(f;q,\ell^t,F/F_0):=\frac{1}{[F:F_0]}\sum_{\tau\in \Hom_{F_0}(F,\mathbb{C})}\frac{c^\tau}{|\Omega_{f^{\tau}}^{+}|^2}MT(f^\tau, f^{\tau};q,\ell^t,1)$$
	with the expressions $MT(f^\tau, f^{\tau};q,\ell^t,1)$ are given in Theorem \ref{general l1l2 sum} 	and the implicit constants  depend only on $f,p,\ell,c$. 
	
	If we assume \ref{non-quad} for $f$, there exists $\ell\in\mcL_{f}$ and $1\leq t\leq T$ such that   for $h\gg_{p,f,\ell,c} 1$,  one has
	$$\frac{\ell^{t/2}}{[F(\chi):F_0]}\Tr_{F(\chi)/F_0}(c|L_f(\chi)|^2\chi(\ell^t))\gg_{p,f,c,\ell}\log q .$$
	In particular $$\Tr_{F(\chi)/F_0}(c|L_f(\chi)|^2\chi(\ell^t))\not=0.$$
\end{prop}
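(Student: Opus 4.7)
The plan is to first reduce the trace sum to an embedding-by-embedding application of Theorem~\ref{general l1l2 sum}, and then to exploit the algebraic structure of the resulting main term via Proposition~\ref{quadtwist} together with the hypothesis \ref{non-quad}.

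For the asymptotic formula, I would use trace transitivity $\Tr_{F(\chi)/F_0} = \Tr_{F/F_0}\circ \Tr_{F(\chi)/F}$. Since $f$ has totally real Hecke eigenvalues, Shimura reciprocity gives $\overline{L_f(\chi)} = L_f(\chi^{-1})$, so $|L_f(\chi)|^2\in\Qf(\chi)\subset F(\chi)$. The inner trace then produces $c\sum_{\sigma\in\Gal(F(\chi)/F)}|L_f(\chi^\sigma)|^2\chi^\sigma(\ell^t)\in F$. For each $\tau\in\Hom_{F_0}(F,\Cc)$, applying \eqref{shim reci} and expanding $|L_f(\chi)|^2=|G(\overline\chi)|^2|L(1/2,f\otimes\chi)|^2/|\Omega_f^+|^2$ converts the conjugated inner sum into a Galois average of the form handled by Theorem~\ref{general l1l2 sum} applied to $(f^\tau,f^\tau)$ with $l_1=\ell^t$, $l_2=1$. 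Summing over $\tau$ and combining the individual error terms yields \eqref{tracesumexpression} with the advertised $O((\log q)^{16/(3\pi)-1})$.

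For the main-term analysis, Theorem~\ref{general l1l2 sum} for $f=g$ makes $MT(f^\tau,f^\tau;q,\ell^t,1)$ a polynomial of degree $1$ in $\log q$ whose leading coefficient is, using $A_1(f;1)=1$, a nonzero explicit multiple of $A_{\ell^t}(f^\tau;1)$. Because $\ell\in\mcL_f$ forces $\lambda_{f^\tau}(\ell)=\tau(\lambda_f(\ell))\notin\{0,\pm 2\}$, Lemma~\ref{AflLemma} gives
$$A_{\ell^t}(f^\tau;1)=\frac{L_\ell(1,f^\tau\otimes f^\tau)}{\zeta_\ell(2)}\bigl(a_{f^\tau,\ell}(1)\alpha_{f^\tau}(\ell)^t+b_{f^\tau,\ell}(1)\beta_{f^\tau}(\ell)^t\bigr),$$
so the coefficient of $\log q$ in $MT(f;q,\ell^t,F/F_0)$ takes the form $\sum_\tau (u_\tau(\ell)\alpha_{f^\tau}(\ell)^t+v_\tau(\ell)\beta_{f^\tau}(\ell)^t)$. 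For the non-vanishing claim I would argue by contradiction: assume this expression vanishes for all $t\in\{1,\ldots,T_F\}$ and a positive density of $\ell\in\mcL_f$. Partitioning $\Hom_{F_0}(F,\Cc)$ by the induced newform yields at most $k\leq|\Hom_{\mathbb Q}(F,\Cc)|\leq T_F/2$ distinct forms $f^{\tau_1},\ldots,f^{\tau_k}$; setting $S_i=\{\tau:f^\tau=f^{\tau_i}\}$ and aggregating fiberwise rewrites the vanishing as $\sum_{i=1}^k(\tilde u_i(\ell)\alpha_{f^{\tau_i}}(\ell)^t+\tilde v_i(\ell)\beta_{f^{\tau_i}}(\ell)^t)=0$ for $1\leq t\leq T_F$, where $\tilde u_i(\ell)$ and $\tilde v_i(\ell)$ factor respectively as $a_{f^{\tau_i},\ell}(1)$ and $b_{f^{\tau_i},\ell}(1)$ times an $\ell$-dependent nonzero factor depending only on $(\ell,\tau_i)$ times the partial trace $\sum_{\tau\in S_i}c^\tau$. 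Granting this coefficient vector is nonzero on a positive density of $\ell$, Proposition~\ref{quadtwist} (with $a=0$, $t_0=k$, $2t_0\leq T_F$) furnishes $i_1\neq i_2$ and a real character $\chi_d$ with $f^{\tau_{i_1}}=f^{\tau_{i_2}}\otimes\chi_d$; composing with $\tau_{i_2}^{-1}$ yields a non-trivial quadratic inner twist of $f$ (the case $\chi_d=1$ being ruled out by the distinctness of $f^{\tau_{i_1}},f^{\tau_{i_2}}$), contradicting \ref{non-quad}.

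The hardest piece of the argument will be the combinatorial verification in the final step, namely that the aggregated vector $(\tilde u_i(\ell),\tilde v_i(\ell))_i$ is nonzero for a positive density of $\ell$. Since $a_{f^{\tau_i},\ell}(1)+b_{f^{\tau_i},\ell}(1)=1$, at least one of the two is nonzero, so this reduces to showing that at least one of the partial traces $\sum_{\tau\in S_i}c^\tau$ is nonzero, i.e.\ a relative-trace property of the generator $c\in F=\Qf(c)$ along the tower $F/(F_0\cdot\Qf)/F_0$. All of the analytic work is otherwise absorbed into Theorem~\ref{general l1l2 sum}.
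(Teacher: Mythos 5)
Your proposal follows the same overall architecture as the paper's proof: decompose $\Hom_{F_0}(F(\chi),\Cc)$ as $\bigsqcup_\tau \tau\Gal(F(\chi)/F)$, apply Shimura reciprocity embedding-by-embedding, invoke Theorem~\ref{general l1l2 sum} with $(f^\tau,f^\tau;\ell^t,1)$, and then argue non-vanishing of the main-term's $\log q$-coefficient via Proposition~\ref{quadtwist}. There is, however, one place where you diverge in a way worth highlighting: the paper applies Proposition~\ref{quadtwist} directly to the un-aggregated $[F:F_0]$-tuple $(f^\tau)_\tau$, which can contain repeated newforms (whenever $\tau\neq\tau'$ agree on $\Qq_f$ but not on $c$). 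In that case the conclusion of Proposition~\ref{quadtwist} can be satisfied with a \emph{trivial} character $\chi_d$ between two indices with $f^{\tau_1}=f^{\tau_2}$, which does not contradict \ref{non-quad}. Your instinct to first aggregate the embeddings into fibers $S_i$ over $\Hom_{F_0}(\Qq_f F_0,\Cc)$ -- so that the resulting tuple of forms is pairwise distinct and any twist produced is forced to be nontrivial -- is exactly the right repair, and is actually a tightening of the written argument.

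The issue is that your ``hardest piece'' is not merely the hardest piece: it is a genuine gap that is \emph{not} closeable from the stated hypotheses. As you note, the aggregated vector is, up to $\tau_i$-conjugation, a nonzero scalar (depending on $\ell$ and $f^{\tau_i}$) times $\sum_{\tau\in S_i}c^\tau=\bigl(\Tr_{F/\Qq_f F_0}(c)\bigr)^{\sigma_i}$. All fibers have the same size, so either every partial trace vanishes or none does, and the nonvanishing is \emph{equivalent} to $\Tr_{F/\Qq_f F_0}(c)\neq0$. This can fail for nonzero $c$: take $\Qq_f=\Qq$, $F_0=\Qq$, $c=i$, so $F=\Qq(i)$; then $\Tr_{F/\Qq}(c)=0$, and indeed one checks directly that $\Tr_{F(\chi)/\Qq}\bigl(i\,|L_f(\chi)|^2\chi(\ell^t)\bigr)=|L_f(\chi)|^2\chi(\ell^t)(i+\overline{i})$ traced down to $\Qq$ is identically zero, so the claimed lower bound $\gg\log q$ cannot hold. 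In other words, the blanket assertion for all $c\in\ov\Qq\setminus\{0\}$ needs either the extra hypothesis $\Tr_{F/\Qq_f F_0}(c)\neq 0$ (equivalently $c\in\Qq_fF_0$, forcing $F=\Qq_fF_0$ so the $f^\tau$ are already pairwise distinct) or a separate argument tailored to the application. For the headline Theorem~\ref{mainthm} the issue evaporates since $c=1$ gives $F=\Qq_f\subseteq\Qq_fF_0$, and your aggregated argument then goes through cleanly and in fact coincides with (a careful reading of) the paper's. You should therefore not try to ``grant'' the partial-trace claim in general -- instead either add the hypothesis, or restrict the statement to the regime $c\in\Qq_fF_0$ and note that this suffices for the downstream applications.

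One small secondary remark: in aggregating you write that $\tilde u_i(\ell)$ and $\tilde v_i(\ell)$ ``factor respectively as $a_{f^{\tau_i},\ell}(1)$ and $b_{f^{\tau_i},\ell}(1)$ times ... the partial trace''; this is correct because $\Omega_{f^{\tau}}^+$ and $L^{(p)}(1,\sym^2 f^\tau)$ depend on $\tau$ only through $f^\tau$, so they are constant on each fiber $S_i$. It is worth stating this explicitly, since it is what makes the aggregated coefficient a clean scalar multiple of the partial trace rather than a more complicated weighted sum.
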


\begin{proof}

	We have the following decomposition
	\begin{align}\label{homsetdecomposition}
		\Hom_{F_0}(F(\chi),\mathbb{C})&=\bigsqcup_{\tau\in \Hom_{F_0}(F,\mathbb{C})}\tau \text{Gal}(F(\chi)/F).
	\end{align}
	 From (\ref{shim reci}) and (\ref{homsetdecomposition}), we see that  $\Tr_{F(\chi)/F_0}(c|L_f(\chi)|^2\chi(\ell^t))$ equals
	\begin{align*}
			&\sum_{\tau\in \Hom_{F_0}(F,\mathbb{C})}\frac{c^\tau}{|\Omega_{f^{\tau}}^{+}|^2}\sum_{\sigma\in \text{Gal}(F(\chi)/F)}|L(1/2,f^{\tau}\otimes\chi^{\tau\sigma})|^2\chi^{\tau\sigma}(\ell^t).
	\end{align*}
	By Theorem \ref{general l1l2 sum}, we obtain the expression in (\ref{tracesumexpression}).

If $F_0=F$, we choose $\ell\in\mcL_{f}$ and $t=1$. By Lemma \ref{AflLemma}, we see that the main term in \eqref{tracesumexpression} is
	\begin{align*}
		\frac{c}{|\Omega_f^{+}|^2}MT(f, f;q,\ell,1)&=\frac{c}{|\Omega_f^{+}|^2}  \frac{2L^{(p)}(1, \sym^2f)}{\zeta_p(1) \zeta^{(p)}(2)}\frac{\lambda_f(\ell)}{1+\ell^{-1}}\log q+O_{f, \ell, c, p}(1)\\
		&\gg_{f,\ell,c,p} \log q.
	\end{align*}

		If $F\not=F_0$, let $t_0=[F:F_0]$ and consider the tuple
		$(f^\tau)_{\tau\in \Hom_{F_0}(F,\mathbb{C})}$ and the following tuple of vectors in $\Cc^2$: $$(C_{f^\tau,\ell},D_{f^\tau,\ell})_{\tau}:=\Big(\frac{c^\tau}{|\Omega_{f^{\tau}}^{+}|^2}\frac{2L^{(p)}(1, \sym^2f^\tau)}{\zeta_p(1) \zeta^{(p)}(2)}a_{f^\tau,\ell}(1),\frac{c^\tau}{|\Omega_{f^{\tau}}^{+}|^2}\frac{2L^{(p)}(1, \sym^2f^\tau)}{\zeta_p(1) \zeta^{(p)}(2)}b_{f^\tau,\ell}(1)\Big)_{\tau}$$ where $a_{f^\tau,\ell}(1),b_{f^\tau,\ell}(1)$ are defined in \eqref{afbfdef}.
		
		 Since $f$ has no non-trivial quadratic inner twists, by Proposition \ref{quadtwist} there exist $\ell\in \mcL_{f}$ and $t\leq T_F$ such that
		$$C(f;t,\ell,F/F_0):=\sum_{\tau\in \Hom_{F_0}(F,\mathbb{C})}\bigl(C_{f^\tau,\ell}\alpha^t_{f^\tau,\ell}(1)+D_{f^\tau,\ell}(1)\beta^t_{f^\tau,\ell}(1)\bigr)\not=0.$$ 
		Indeed  the vanishing would imply by Proposition~\ref{quadtwist}, that there exists $\tau_{1} \neq \tau_{2} \in \Hom_{F_0}(F,\mathbb{C})$ and a real valued character $\chi_d$ such that $f^{\tau_{1}} = f^{\tau_{2}} \otimes \chi_d$; this would contradict the assumption \ref{non-quad}.
		So we have
	$$
		\frac{\ell^{t/2}}{[F(\chi):F_0]}\Tr_{F(\chi)/F_0}(c|L_f(\chi)|^2\chi(\ell^t))=
		\frac{C(f;t,\ell,F/F_0)}{[F:F_0]}\log q+O_{f,\ell,c,p}(1),
	$$
	with a non-vanishing main term.
\end{proof}

Next we prove a simultaneous version of  \cite[Proposition 5.1]{sun2019generation} and  \cite[Theorem A]{luo1997determination}, which shall be used to prove Theorem \ref{lvalue generate coefficients} below.

Let $f_1,f_2$ be normalized non-CM newforms. We assume that 
\begin{enumerate}
	\item $f_1$ and $f_2$ have no nontrivial quadratic inner twists.
	\item $f_1$ is not quadradically equivalent to $f_2$ up to Galois conjugates unless $f_1=f_2$.
\end{enumerate}
\begin{thm}\label{determination L}
	 If for some $c\in\ov\Qq-\{0\}$ we have
	\begin{align}
		|L_{f_1}(\chi)|^2=c|L_{f_2}(\chi)|^2\label{determinationcondition}
	\end{align}
	for infinitely many $\chi\in \Xipw$, then under the above assumptions $f_1=f_2$.
\end{thm}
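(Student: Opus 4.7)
The plan is to argue by contradiction: assume $f_1 \neq f_2$ and derive a quadratic-twist relation between $f_1$ and $f_2$ by feeding \eqref{determinationcondition} into Proposition \ref{nonvan} and then invoking Proposition \ref{quadtwist}.

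First I would set $F = \Qb_{f_1}\cdot\Qb_{f_2}\cdot\Qb(c)$ and take $F_0 = F$, so that $c \in F$ is $\Gal(F(\chi)/F)$-invariant. For any prime $\ell \in \mcL_{f_1} \cap \mcL_{f_2}$ (a set of density $1/(p-1)$ by Lemma \ref{v lemma}) and any $t \geq 1$, multiplying \eqref{determinationcondition} by $\chi(\ell^t)$ and applying $\Tr_{F(\chi)/F}$ yields
\begin{equation*}
\Tr_{F(\chi)/F}\bigl(|L_{f_1}(\chi)|^2\chi(\ell^t)\bigr) = c\,\Tr_{F(\chi)/F}\bigl(|L_{f_2}(\chi)|^2\chi(\ell^t)\bigr).
\end{equation*}
Applying Proposition \ref{nonvan} with $F_0 = F$ to each $f_i$ (the proof extends verbatim to any $F\supseteq\Qb_{f_i}$; since $\Hom_{F_0}(F,\Cc)$ collapses to the identity embedding, no conjugates intervene), each side acquires an asymptotic of the form $K_i(\ell)\,|\Omega_{f_i}^+|^{-2}\,A_{\ell^t}(f_i;1)\,\log q + O((\log q)^{\frac{16}{3\pi}-1})$, in which $K_i(\ell) \neq 0$ collects the nonzero Rankin--Selberg value $L^{(p\ell)}(1,\sym^2 f_i)$ and the relevant Euler factors.

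Because \eqref{determinationcondition} holds on an infinite subset of $\Xipw$, the conductor $q = p^h$ may be sent to infinity. Since $\frac{16}{3\pi} - 1 < 1$, the leading $\log q$-coefficients of the two sides must match, giving
\begin{equation*}
\frac{K_1(\ell)}{|\Omega_{f_1}^+|^2}A_{\ell^t}(f_1;1) \;=\; c\,\frac{K_2(\ell)}{|\Omega_{f_2}^+|^2}A_{\ell^t}(f_2;1)
\end{equation*}
for all $\ell \in \mcL_{f_1} \cap \mcL_{f_2}$ and all $t \geq 1$. By Lemma \ref{AflLemma}, for $\ell \in \mcL_{f_i}$ we have $\alpha_{f_i}(\ell) \neq \beta_{f_i}(\ell)$, and $A_{\ell^t}(f_i;1)$ equals a nonzero Euler factor times $a_{f_i,\ell}(1)\alpha_{f_i}(\ell)^t + b_{f_i,\ell}(1)\beta_{f_i}(\ell)^t$. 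Crucially, $a_{f_i,\ell}(1)$ and $b_{f_i,\ell}(1)$ are both nonzero: by Deligne's bound $|\alpha_{f_i}(\ell)| = |\beta_{f_i}(\ell)| = 1$, whereas the vanishing of either coefficient in \eqref{afbfdef} at $s=1$ would force $|\alpha_{f_i}(\ell)|$ or $|\beta_{f_i}(\ell)|$ to equal $\ell^{-1/2}$.

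Thus for every $\ell$ in the positive-density set $\mcL_{f_1} \cap \mcL_{f_2}$ we obtain an identity
\begin{equation*}
\sum_{i=1,2}\bigl(c_{i,\ell}\,\alpha_{f_i}(\ell)^t + d_{i,\ell}\,\beta_{f_i}(\ell)^t\bigr) = 0 \qquad \text{for all } t \geq 1,
\end{equation*}
with $(c_{1,\ell}, d_{1,\ell}, c_{2,\ell}, d_{2,\ell}) \in \Cc^4 \setminus \{\mathbf{0}\}$. Specializing to $t = 1, 2, 3, 4$ and invoking Proposition \ref{quadtwist} with $t_0 = 2$ and $a = 0$ applied to the tuple $(f_1, f_2)$, there exist distinct $t_1 \neq t_2 \in \{1, 2\}$ and a real Dirichlet character $\chi_d$ with $f_{t_1} = f_{t_2} \otimes \chi_d$; hypothesis (2) then forces $f_1 = f_2$, a contradiction. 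The most delicate step I anticipate is the verification that the coefficient vector $(c_{i,\ell}, d_{i,\ell})_{i}$ is genuinely nonzero for a density-positive set of primes, which relies on the explicit form of $a_{f_i,\ell}(1), b_{f_i,\ell}(1)$ from Lemma \ref{AflLemma} together with the Ramanujan bound at primes $\ell \in \mcL_{f_i}$.
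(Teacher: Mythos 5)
Your proof is correct and follows essentially the same route as the paper's: take the trace over $\Gal(F(\chi)/F)$ against $\chi(\ell^t)$, extract leading $\log q$-coefficients from the second-moment asymptotics, obtain a system of linear relations in the Langlands parameters for $1 \leq t \leq 4$, and invoke Proposition \ref{quadtwist} together with hypothesis (2) to force $f_1 = f_2$. The paper applies Theorem \ref{general l1l2 sum} directly rather than routing through Proposition \ref{nonvan}, but this is only a presentational difference. One small point of added value in your write-up is the explicit verification, via Deligne's bound $|\alpha_{f_i}(\ell)| = |\beta_{f_i}(\ell)| = 1$ and the formulas \eqref{afbfdef}, that $a_{f_i,\ell}(1)$ and $b_{f_i,\ell}(1)$ are nonzero; the paper leaves this tacit, and it is indeed needed so that the coefficient vector entering Proposition \ref{quadtwist} has nonzero components on \emph{both} $f_1$ and $f_2$ (if, say, all the $f_1$-coefficients vanished, the Vandermonde argument would only yield a contradiction internal to $f_2$ rather than a relation between the two forms). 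You also observe, correctly, that under the reductio hypothesis $f_1 \neq f_2$ only assumption (2) is actually invoked, whereas the paper's concluding sentence mentions both assumptions — a harmless imprecision on their part since the tuple fed into Proposition \ref{quadtwist} here consists of the two (distinct) identity embeddings.
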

\begin{proof}
Let us set $$F_{12}:=\mathbb{Q}_{f_1,f_2}(c)$$ and $$T_{12}=2\max(2, |\Hom_{\mathbb{Q}}(F_{12},\mathbb{C})|)$$
and $\ell \in \mcL_{f_1}\cap \mcL_{f_2}$. Let $\chi\in\Xipw$ of conductor $q=p^h$ be such that $|L_{f_1}(\chi)|^2=c|L_{f_2}(\chi)|^2$.

	Applying $\sigma\in \text{Gal}(F_{12}(\chi)/F_{12})$ to (\ref{determinationcondition}), multiplying by $\chi(\ell^t)$ for some $t\geq 0$ and summing it up over $\text{Gal}(F_{12}(\chi)/F_{12})$, we have
	\begin{align*}
		\Tr_{F_{12}(\chi)/F_{12}}(|L_{f_1}(\chi)|^2\chi(\ell^t))=\Tr_{F_{12}(\chi)/F_{12}}(c|L_{f_2}(\chi)|^2\chi(\ell^t)).
	\end{align*}
	From Theorem \ref{general l1l2 sum}, we obtain 
	\begin{align*}
		\frac{MT(f_1, f_1;q,\ell^t,1)}{|\Omega_{f_1}^{+}|^2\log{q}}=\frac{c MT(f_2, f_2;q,\ell^t,1)}{|\Omega_{f_2}^{+}|^2\log{q}}+o(1).
	\end{align*}

Using Lemma \ref{AflLemma} (see also Remark \ref{remf=g} for the adjusted notations) and letting $h\rightarrow\infty$, we then obtain a system of linear equations
\begin{align*}
	D_{f_1}(a_{f_1,\ell}(1)\alpha_{f_1,\ell}^t+b_{f_1,\ell}(1)\beta_{f_1,\ell}^t)=cD_{f_2}(a_{f_2,\ell}(1)\alpha_{f_2,\ell}^t+b_{f_2,\ell}(1)\beta_{f_2,\ell}^t)
\end{align*}
for all $\ell \in \mcL_{f_1}\cap \mcL_{f_2}$ and $1\leq t\leq 4$, where 
\begin{align*}
	D_{f_i}=\frac{L^{(p)}(\sym^2f_i,1)}{|\Omega_{f_i}^{+}|^2
	},\ i=1,2.
\end{align*}

	By Proposition \ref{quadtwist}, we conclude that $f_1$ or $f_2$ has a nontrivial quadratic inner twist, or $f_1$ is quadradically equivalent to $f_2$ up to some Galois conjugates. From our assumption, it follows that $f_1=f_2$. 
\end{proof}
\subsection{The main result}
We are now able to prove that the $p$-power roots of unity and the square of the absolute value of special $L$-values generate the Hecke field. 
\begin{prop}\label{thm64}
	Assume \ref{non-quad}. Let $c\in\Qq_f-\{0\}$. Then for all but finitely many $\chi\in \Xipw$, we have
	\begin{align}\label{lvalue generate coefficients}
		\Qq_f(\chi)= \mathbb{Q}(\chi,c|L_f(\chi)|^2). 
	\end{align}
\end{prop}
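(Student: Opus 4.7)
The containment $\Qq(\chi, c|L_f(\chi)|^2) \subseteq \Qq_f(c, \chi)$ is immediate: Shimura reciprocity \eqref{shim reci} applied with complex conjugation gives $\overline{L_f(\chi)} = L_f(\bar{\chi}) \in \Qq_f(\chi)$ (using that $\Qq_f$ is totally real), whence $|L_f(\chi)|^2 = L_f(\chi) L_f(\bar{\chi}) \in \Qq_f(\chi) \subseteq \Qq_f(c, \chi)$. The content of the proposition is therefore the reverse inclusion, and since $\chi$ already lies in $M_\chi := \Qq(\chi, c|L_f(\chi)|^2)$, it suffices to prove $F := \Qq_f(c) \subseteq M_\chi$ for all but finitely many $\chi \in \Xipw$. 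I argue by contradiction.

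Suppose $F \not\subseteq M_\chi$ for infinitely many $\chi$. For each such $\chi$, choose $\sigma_\chi \in \Gal(\ov\Qq/M_\chi)$ with $\sigma_\chi|_F \neq \mathrm{id}_F$. Because $\Hom_\Qq(F, \ov\Qq)$ is finite, the pigeonhole principle supplies a single embedding $\tau : F \hookrightarrow \ov\Qq$, $\tau \neq \mathrm{id}_F$, with $\sigma_\chi|_F = \tau$ for an infinite sub-family $S'$ of characters. For $\chi \in S'$, $\sigma_\chi$ fixes both $\chi$ (hence $\bar{\chi}$) and $c|L_f(\chi)|^2$; Shimura reciprocity \eqref{shim reci} applied to each of $L_f(\chi)$ and $L_f(\bar{\chi})$ then yields
\[
c\,|L_f(\chi)|^2 = \sigma_\chi\!\left(c\,|L_f(\chi)|^2\right) = \tau(c)\,L_{f^\tau}(\chi)\,L_{f^\tau}(\bar{\chi}) = \tau(c)\,|L_{f^\tau}(\chi)|^2,
\]
equivalently $|L_f(\chi)|^2 = (\tau(c)/c)\,|L_{f^\tau}(\chi)|^2$ for every $\chi \in S'$, with a single fixed $\tau$.

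I now invoke Theorem \ref{determination L} applied to the pair $(f, f^\tau)$ with constant $\tau(c)/c$: this identifies $f = f^\tau$, so $\tau|_{\Qq_f} = \mathrm{id}$, and the displayed identity degenerates to $1 = \tau(c)/c$ upon choosing any $\chi \in S'$ with $L_f(\chi) \neq 0$ (possible by Rohrlich's non-vanishing \cite{Rohr}). Hence $\tau$ fixes $c$ as well, i.e.\ $\tau|_F = \mathrm{id}_F$, contradicting our choice. The main subtlety in this plan is the verification that Theorem \ref{determination L} is applicable to $(f, f^\tau)$: hypothesis (1) (no non-trivial quadratic inner twists) is preserved under Galois conjugation and thus passes from $f$ to $f^\tau$ by \ref{non-quad}, while hypothesis (2) (no quadratic equivalence up to Galois conjugates except equality) likewise follows from \ref{non-quad}, since any such equivalence between $f$ and a Galois conjugate of $f^\tau$ would exhibit a non-trivial quadratic inner twist of $f$. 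All remaining steps are routine Galois-theoretic bookkeeping combined with the Shimura reciprocity formula.
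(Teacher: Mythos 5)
Your proof is correct, and it uses the same three ingredients as the paper's argument — a pigeonhole step to fix a single Galois datum across infinitely many $\chi$, Shimura reciprocity to convert the ``fixed-point'' condition into an $L$-value identity $|L_f(\chi)|^2 = (\tau(c)/c)\,|L_{f^\tau}(\chi)|^2$, and then Theorem~\ref{determination L} together with \ref{non-quad} to force $f=f^\tau$ and $\tau(c)=c$. The main organizational difference is in how the pigeonhole is set up and where the argument is housed: the paper works \emph{directly} inside the Galois closure $E$ of $F=\Qq_f(c)$, pigeonholes on the finitely many subfields of $E$ to find $E_0$ with $E\cap L_\chi = E_0$ for infinitely many $\chi$, identifies $\Gal(E(\chi)/L_\chi)\simeq\Gal(E/E_0)$, and concludes that every $\sigma\in\Gal(E/E_0)$ fixes $F$, hence $F\subseteq E_0\subseteq L_\chi$; you instead argue by \emph{contradiction}, pick $\sigma_\chi\in\Gal(\ov\Qq/M_\chi)$ moving $F$, and pigeonhole on the finitely many embeddings in $\Hom_\Qq(F,\ov\Qq)$ to obtain a single nontrivial $\tau$. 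Your version is slightly leaner because it avoids introducing the Galois closure and the restriction isomorphism, and it makes explicit the step (implicit in the paper) where Rohrlich's non-vanishing is used to upgrade $f=f^\tau$ to $\tau|_F=\mathrm{id}$. One small caveat applying to both write-ups: as literally stated, hypothesis~(2) of Theorem~\ref{determination L} (``$f_1\sim_q f_2$ implies $f_1=f_2$'') is vacuously violated by the pair $(f,f^\tau)$ since $f\sim_q f^\tau$ always holds via the Galois conjugate $\tau^{-1}$ with trivial twist; what is actually produced by Proposition~\ref{quadtwist} in the proof of Theorem~\ref{determination L} is the \emph{stronger} relation $f_1 = f_2\otimes\chi_d$ with no extra conjugate, and it is this that \ref{non-quad} forces to degenerate to $f_1=f_2$. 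Your justification of hypothesis~(2) should be read in that spirit, as should the paper's.
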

\begin{proof} Set $F=\Qq_f$ and for any $\chi\in\Xipw$ let
	 $$L_\chi=\mathbb{Q}(\chi,c|L_f(\chi)|^2).$$
	Since $L_\chi\subseteq F(\chi)$, it suffices to show that for any infinite subset $S\subset \Xipw$, one has $F\subseteq L_\chi$ for infinitely many $\chi\in S$.
	
	Let $E$ be the Galois closure of $F$ over $\mathbb{Q}$ and let $E_0\subset E$ be a  subextension satisfying
	$$E_0=E\cap L_\chi$$ for infinitely many $\chi\in\Xipw$ (it exists since $E$ has finitely many subextensions). Let  	\begin{align*}
		S_0:=\{\chi\in S \mid L_\chi\cap E=E_0\}.
	\end{align*}
For any $\chi\in S_0$, the restriction to $E$ map yields an isomorphism 
$$\Gal(E(\chi)/L_\chi)\simeq \Gal(E/E_0)$$  and for $\sigma\in\Gal(E/E_0)$ we have
	\begin{align*}
			&c|L_{f}(\chi)|^2=c^\sigma L_{f}(\chi)^\sigma L_{f}(\overline{\chi})^\sigma
			=c^\sigma L_{f^\sigma}(\chi) L_{\overline{f^\sigma}}(\overline{\chi})=c^\sigma| L_{f^\sigma}(\chi)|^2.
	\end{align*}
Since $f$ is of trivial nebentypus, $\Qq_f$ is totally real (and so is $\Qq_{f^\sigma}$). 
	
	From Theorem \ref{determination L} with $(f_1, f_2) = (f, f^{\sigma})$, we conclude that $f^\sigma=f$. Therefore $F\subseteq E_0\subset L_\chi$.
\end{proof}

Next we prove that $c|L_f(\chi)|^2$ generates the $p$-power roots of unity over $\Qq(\mu_p)$.
\begin{prop}\label{l-value generate root of unity} Let $c\in \Qq_f-\{0\}$.
	Assume \ref{non-quad} and $p > 2\max(2, |\Hom_{\mathbb{Q}}(\Qq_f,\mathbb{C})|)$. Then 
	for all but finitely many $\chi\in \Xipw$, we have
	\begin{align*}
		\mathbb{Q}(\chi)\subseteq \Qq(\mu_p,c|L_f(\chi)|^2).
	\end{align*}
\end{prop}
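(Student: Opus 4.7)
The plan is to argue by contradiction, combining Proposition~\ref{thm64} with the non-vanishing result of Proposition~\ref{nonvan} via a trace-transitivity computation, following the sketch in the introduction. Set $K_\chi := \Qq(\mu_p, c|L_f(\chi)|^2)$, so that the inclusion $K_\chi \subseteq \Qq_f(c,\chi)$ is automatic. By Proposition~\ref{thm64}, for all but finitely many $\chi \in \Xipw$ one has $\Qq_f(c,\chi) = \Qq(\chi, c|L_f(\chi)|^2) = \Qq(\chi)\cdot K_\chi$, reducing the task to the inclusion $\Qq(\chi) \subseteq K_\chi$ for almost all $\chi$. Suppose toward contradiction that this fails for an infinite subset $S \subseteq \Xipw$, and set $L_\chi := \Qq(\chi) \cap K_\chi$. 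Since $\Qq(\chi) = \Qq(\mu_{p^{h-1}})$ (with $p^h$ the conductor of $\chi$) contains $\mu_p \subseteq K_\chi$, the intermediate field $L_\chi$ is necessarily of the form $\Qq(\mu_{p^j})$ for some $1 \leq j \leq h-1$, and our assumption forces $j < h-1$.

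Next I would apply Proposition~\ref{nonvan} with $F = \Qq_f(c)$ and $F_0 = \Qq$. Under the hypothesis of Theorem~\ref{mainthm} (which guarantees $T_F < p$), this produces a prime $\ell \in \mcL_f$ and an integer $1 \leq t \leq T_F < p$ such that
\begin{equation*}
\Tr_{F(\chi)/\Qq}\bigl(c|L_f(\chi)|^2 \chi(\ell^t)\bigr) \neq 0
\end{equation*}
whenever the conductor of $\chi$ is sufficiently large. Since $\mcL_f$ only contains primes in $1+p\Zz$, we have $\ell \equiv 1 \pmod{p}$, and Remark~\ref{remcongruence} tells us that $\chi(\ell)$ is a primitive $p^{h-1}$-st root of unity. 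Because $p \nmid t$ (as $t < p$), the element $\chi(\ell^t)$ is likewise a primitive $p^{h-1}$-st root of unity.

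To reach the contradiction, I compute the trace $\Tr_{F(\chi)/\Qq}$ in two stages through $K_\chi$. Using $F(\chi) = \Qq(\chi) K_\chi$ (from Proposition~\ref{thm64}), the restriction map gives the isomorphism $\Gal(F(\chi)/K_\chi) \cong \Gal(\Qq(\chi)/L_\chi)$, so for every $\alpha \in \Qq(\chi)$,
\begin{equation*}
\Tr_{F(\chi)/K_\chi}(\alpha) = \Tr_{\Qq(\chi)/L_\chi}(\alpha).
\end{equation*}
Taking $\alpha = \chi(\ell^t)$, this element lies in $L_\chi = \Qq(\mu_{p^j})$ precisely when $p^{h-1-j} \mid t$, which fails since $p \nmid t$ and $h-1-j \geq 1$. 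Character orthogonality then yields $\Tr_{\Qq(\chi)/L_\chi}(\chi(\ell^t)) = 0$, and pulling $c|L_f(\chi)|^2 \in K_\chi$ through the outer trace we conclude
\begin{equation*}
\Tr_{F(\chi)/\Qq}\bigl(c|L_f(\chi)|^2 \chi(\ell^t)\bigr) = \Tr_{K_\chi/\Qq}\Bigl(c|L_f(\chi)|^2 \cdot \Tr_{F(\chi)/K_\chi}(\chi(\ell^t))\Bigr) = 0
\end{equation*}
for infinitely many $\chi \in S$, contradicting Proposition~\ref{nonvan}.

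The only delicate point in the argument is ensuring that the integer $t$ returned by Proposition~\ref{nonvan} satisfies $p \nmid t$, for this is precisely what keeps $\chi(\ell^t)$ a primitive $p^{h-1}$-st root of unity and keeps it outside the proper intermediate cyclotomic field $L_\chi$. This is exactly what the hypothesis $p > \max(4, 2|\Hom_\Qq(\Qq_f,\Cc)|)$ of Theorem~\ref{mainthm} secures, by forcing $T_F < p$. All remaining steps are Galois-theoretic manipulations inside the compositum of the abelian extensions $\Qq(\chi)/\Qq$ and $K_\chi/\Qq$.
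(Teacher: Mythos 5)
Your proof is correct and follows essentially the same route as the paper's: argue by contradiction, apply Proposition~\ref{nonvan} with $F_0=\Qq$, push the trace through $K_\chi$, and use the orthogonality relation on $\chi(\ell^t)$ to force the trace to vanish. The one structural difference is that you explicitly invoke Proposition~\ref{thm64} at the outset to identify $F(\chi)=\Qq(\chi)K_\chi$ and reduce the claim to $\Qq(\chi)\subseteq K_\chi$; the paper's proof instead carries along the factor $[F(\chi):\Qq(\chi)K_\chi]$ through the trace computation and hence literally only establishes $\Qq(\chi)\subseteq K_\chi$, deferring the assembly of the full inclusion to Theorem~\ref{main theorem later section}. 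Your version is arguably tidier. One small caution about the "delicate point" you flag: you justify $p\nmid t$ by saying the hypothesis $p>\max(4,2|\Hom_\Qq(\Qq_f,\Cc)|)$ of Theorem~\ref{mainthm} forces $T_F<p$, but $T_F$ is defined with $F=\Qq_f(c)$, and when $c\notin\Qq_f$ one may have $[F:\Qq]>[\Qq_f:\Qq]$ so that $T_F\geq p$; your claim is only guaranteed for $c\in\Qq_f$. To be fair, the paper's own statement of Proposition~\ref{l-value generate root of unity} imposes no constraint relating $p$ to $[\Qq_f(c):\Qq]$ either, and its proof implicitly uses $T_F<p$ when citing Remark~\ref{remcongruence} to assert that $\chi(\ell^t)$ is a primitive $p^{h-1}$-st root of unity for all $1\leq t\leq T_F$ — so the gap is inherited, not introduced, but your attribution of why $T_F<p$ holds is not correct for general $c$.
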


\begin{proof}
	Let $F=\Qq_f$ and
	$$K_\chi:=\Qq(\mu_p,c|L_f(\chi)|^2)\subset F(\chi).$$
	
	Suppose $K_\chi\cap \mathbb{Q}(\chi)\neq \mathbb{Q}(\chi)$ for infinitely many $\chi$. Let $\ell\in \mcL_f$ and $1\leq t\leq T_F$. By transitivity of the trace we obtain
	\begin{align}
		\label{chi L trace}\Tr_{F(\chi)/\mathbb{Q}}(\chi(\ell^t)c|L_f(\chi)|^2)=\Tr_{K_\chi/\mathbb{Q}}\big(c|L_f(\chi)|^2\Tr_{F(\chi)/K_\chi}(\chi(\ell^t))\big).
	\end{align} for such $\chi$'s. 
	This can be rewritten as
	\begin{align*}
		[F(\chi):\mathbb{Q}(\chi)K_\chi]\Tr_{K_\chi/\mathbb{Q}}\big(c|L_f(\chi)|^2\Tr_{\mathbb{Q}(\chi)/K_\chi\cap\mathbb{Q}(\chi)}(\chi(\ell^t))\big).
	\end{align*}
	Now for any  $\chi\in\Xipw$, any $l\in \Zz$ coprime with $p$ and any 
	subfield $L\subseteq\mathbb{Q}(\chi)$ such that $\mu_p\subseteq L$, we have
	\begin{align}\label{orth}
		\Tr_{\mathbb{Q}(\chi)/L}(\chi(l))=\begin{cases}
			[\mathbb{Q}(\chi):L]\chi(l)&\text{if }\chi(l)\in L,\\
			0&\text{otherwise.}
		\end{cases}
	\end{align}
	(Indeed, the condition $\mu_p \subseteq L \subseteq \Qq(\chi)$ implies that $L = \Bbb{Q}(\mu_{p^r})$ for some $1 \leq r \leq h-1$. Then the elements of $\text{Gal}(\Bbb{Q}(\chi)/L)$ are explicitly given by $\zeta_{p^{h-1}} \mapsto \zeta_{p^{h-1}}^{n}$ for $n \equiv 1\mods{p^r}$) and the claim follows.)

	Now \eqref{orth} and our assumption $K_\chi\cap \mathbb{Q}(\chi)\neq \mathbb{Q}(\chi)$ implies that (\ref{chi L trace}) is zero for infinitely many $\chi$, since $\chi(\ell^t)$ is a primitive $p^{h-1}$-st root of unity for all $1\leq t\leq T_F$ by our assumption $p > T_F$,  cf.\ Remark \ref{remcongruence}.
	
	On the other hand, we have shown in Proposition \ref{nonvan} that there exists  at least one $\ell^t$ such that the  trace (\ref{chi L trace}) is non-vanishing for all but finitely many $\chi$. Thus we obtain a contradiction. 
\end{proof}
\begin{rem}\label{orth remark}
	The orthogonality relation (\ref{orth}) appears in  \cite[p.\ 934]{sun2019generation} except for the condition $\mu_p\subseteq L$. However, such a condition cannot be excluded. For example, if $L$ is a maximal real subfield of $\mathbb{Q}(\chi)$, we have $$\Tr_{\mathbb{Q}(\chi)/L}(\chi(l))=\chi(l)+\overline{\chi}(l)\neq 0.$$ Thus we let $K_\chi$ contain $p$-th roots of unity by choosing the base field $\mathbb{Q}(\mu_{p})$. For the same reason, the assertions of  \cite[Theorem D and Theorem 5.4]{sun2019generation} should be slightly modified and $\mu_p$ should be included. 
\end{rem}

\begin{rem} The left hand side of the following sketch shows the field diagram in the proof of Proposition \ref{l-value generate root of unity} (based on Proposition \ref{nonvan}, indicated by the dotted lines), the right hand side shows the field diagram  in the proof of Proposition \ref{thm64} (based on Theorem \ref{determination L}). 

  \xymatrixcolsep{0mm}
\begin{xy}
\xymatrix @!R{ & \Bbb{Q}_f( \chi) \ar@{-}[d] & &&& \\ & \Bbb{Q}(\chi, c|L_f(\chi)|^2)  \ar@{-}[rd]\ar@{-}[ld]&    && E(\chi) \ar@{-}[rd]\ar@{-}[ld]& \\
\Bbb{Q}(\chi)   \ar@{-}[rd]&   &\hspace{-0.5cm} \Bbb{Q}(\mu_p, c|L_f(\chi)|^2)   \ar@{-}[ld]  & E \ar@{-}[d] &&\hspace{-0.8cm}\Bbb{Q}(\chi, c|L_f(\chi)|^2) \\ & \Bbb{Q}(\chi) \cap \Bbb{Q}(\mu_p, c|L_f(\chi)|^2) \ar@{-}[d]& &\Bbb{Q}_f  \ar@{..}[lld]\ar@{..}[lluuu]& E\cap \Bbb{Q}(\chi, c|L_f(\chi)|^2)\ar@{-}[lu]\ar@{-}[ru] & \\ & \Bbb{Q} & &&&  }
\end{xy} 

\end{rem}

Combining Proposition \ref{thm64} and Proposition \ref{l-value generate root of unity}, we finally reach our main goal:
	\begin{thm}\label{main theorem later section} Let $c\in\Qq_f-\{0\}$.	Assume that $f$ satisfies \ref{non-quad} and $p > 2\max(2, |\Hom_{\mathbb{Q}}(\Qq_f,\mathbb{C})|)$. Then for all but finitely many $\chi\in \Xipw$, we have
	\begin{align*}
		\Qq_f(\chi)=\Qq(\mu_p,c|L_f(\chi)|^2).
	\end{align*}
\end{thm}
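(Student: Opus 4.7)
The plan is to obtain the theorem as a short corollary of Propositions~\ref{thm64} and~\ref{l-value generate root of unity}, which together supply the two inclusions for all $\chi$ of sufficiently large conductor. Write $F(\chi) = \mathbb{Q}_f(c,\chi)$ and $K_\chi = \mathbb{Q}(\mu_p, c|L_f(\chi)|^2)$; the target is $F(\chi) = K_\chi$ for all but finitely many $\chi \in \Xi_p^w$.

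First I would verify $K_\chi \subseteq F(\chi)$. By Proposition~\ref{thm64} we have $F(\chi) = \mathbb{Q}(\chi, c|L_f(\chi)|^2)$ outside a finite set of $\chi \in \Xi_p^w$. For any such $\chi$ of conductor $p^h$ with $h\geq 2$, the image of $\chi$ is $\mu_{p^{h-1}}$, so $\mu_p \subseteq \mathbb{Q}(\chi) \subseteq F(\chi)$; together with the obvious $c|L_f(\chi)|^2 \in F(\chi)$ this yields $K_\chi \subseteq F(\chi)$. For the reverse inclusion $F(\chi) \subseteq K_\chi$, I would invoke Proposition~\ref{l-value generate root of unity} directly. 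Restricting $\chi$ to the intersection of the two cofinite sets produced by these propositions gives the equality.

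The substantive difficulty has already been absorbed into the two input propositions; what remains is purely formal. Both propositions rest on the non-vanishing of the Galois trace $\operatorname{Tr}_{F(\chi)/F_0}(c|L_f(\chi)|^2 \chi(\ell^t))$ from Proposition~\ref{nonvan}. That in turn feeds on the second-moment asymptotic of Theorem~\ref{general l1l2 sum}, whose error term requires the sieve-theoretic and Sato--Tate treatment of the off-diagonal congruence sums in Sections~\ref{second moment section} and~\ref{f=g error term section}; and on the use of \ref{non-quad} through Ramakrishnan's determination theorem (Proposition~\ref{quadtwist}) to ensure that the Vandermonde-type linear system arising from the main term admits no nontrivial solution. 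Once these analytic and algebraic ingredients are granted, the only step at the level of Theorem~\ref{main theorem later section} itself is the two-line inclusion chain above, and I would expect no further obstacle.
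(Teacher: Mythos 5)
Your proof is correct and matches the paper's approach exactly: the theorem is the immediate combination of Proposition~\ref{thm64} (giving $F(\chi)=\mathbb{Q}(\chi,c|L_f(\chi)|^2)$) and Proposition~\ref{l-value generate root of unity} (giving $F(\chi)\subseteq K_\chi$), with the opposite inclusion $K_\chi\subseteq F(\chi)$ being immediate. The only very minor point is that the containment $K_\chi\subseteq F(\chi)$ does not actually need Proposition~\ref{thm64} at all, since $\mu_p\subseteq\mathbb{Q}(\chi)$ for every wild $\chi$ and $c|L_f(\chi)|^2\in F(\chi)$ follows directly from Shimura's algebraicity; Proposition~\ref{thm64} is needed only to upgrade the conclusion of Proposition~\ref{l-value generate root of unity}'s proof (which produces $\mathbb{Q}(\chi)\subseteq K_\chi$) to the full inclusion $F(\chi)\subseteq K_\chi$.
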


\begin{proof}Let $d_f=[\Qq_f:\Qq]$. From our assumption we have
$$[\mathbb{Q}_f(\chi):\mathbb{Q}_f]=[\mathbb{Q}(\chi):\mathbb{Q}]=\vphi(p^{h-1})=(p-1)p^{h-2},$$
and therefore  $$[\mathbb{Q}_f(\chi):\mathbb{Q}]=d_f(p-1)p^{h-2}.$$
	
	  Since $\Qq(\mu_p)/\mathbb{Q}$ is Galois and for all but finitely many  $\chi$,
	  we have
	  $$\mathbb{Q}(\mu_p,|L_f(\chi)|^2)=\mathbb{Q}_f(\chi)$$
	   by Theorem \ref{main theorem later section}, it follows that $[\Qq(\mu_p):\mathbb{Q}]=p-1$ is divisible by $d_L=[\mathbb{Q}_f(\chi):\mathbb{Q}(|L_f(\chi)|^2)]$. Now we write
	\begin{align*}
		G_1=\text{Gal}(\mathbb{Q}_f(\chi)/\mathbb{Q}(|L_f(\chi)|^2)), \quad G_2=\text{Gal}(\mathbb{Q}_f(\chi)/\mathbb{Q}_f).
	\end{align*}
	Then $G_1G_2$ is a subgroup of $\text{Aut}(\mathbb{Q}_f(\chi)/\mathbb{Q})$, so that 
	\begin{align*}
		|G_1G_2|=\frac{d_L\times (p-1)p^{h-2}}{|G_1\cap G_2|}\text{ divides }d_f(p-1)p^{h-2}.
	\end{align*}
	Since $(d_L,d_f)$=1, we see that $|G_1\cap G_2|=d_L$, thus $\mathbb{Q}_f$ is a subfield of $\mathbb{Q}_f(\chi)$ fixed by $G_1$. \end{proof}
	\begin{cor}\label{Qf corollary}
	 Suppose in addition that $(p(p-1),[\Qq_f:\Qq])=1$. Then for all but finitely many $\chi\in \Xipw$, we have
	\begin{align}
		\mathbb{Q}(|L_f(\chi)|^2)\supseteq\mathbb{Q}_f.\label{Qf inclusion last section}
	\end{align}
	 
\end{cor}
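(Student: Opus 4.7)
The plan is to deduce Corollary \ref{Qf corollary} directly from Theorem \ref{main theorem later section} by a Galois-theoretic degree argument that feeds on the coprimality assumption.

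First I would apply Theorem \ref{main theorem later section} with $c=1$ and $F=\mathbb{Q}_f$: for all but finitely many $\chi \in \Xipw$, one has $\mathbb{Q}_f(\chi) = \mathbb{Q}(\mu_p, |L_f(\chi)|^2)$. Set $L := \mathbb{Q}(|L_f(\chi)|^2)$ and $K := \mathbb{Q}_f(\chi)$, so this reads $K = L(\mu_p)$. The goal $\mathbb{Q}_f \subseteq L$ is equivalent to the compositum relation $L \cdot \mathbb{Q}_f = L$, i.e.\ $[L \cdot \mathbb{Q}_f : L] = 1$.

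Second, I would derive two divisibility constraints on $d := [L \cdot \mathbb{Q}_f : L]$. Since $L \cdot \mathbb{Q}_f$ sits inside $K = L(\mu_p)$ and $L(\mu_p)/L$ is a cyclotomic (hence abelian) extension of degree dividing $\varphi(p) = p-1$, the subextension $L \cdot \mathbb{Q}_f / L$ is automatically Galois (since subextensions of abelian extensions are Galois) and $d$ divides $p-1$. Separately, the coprimality $([\mathbb{Q}_f:\mathbb{Q}], [\mathbb{Q}(\chi):\mathbb{Q}]) = 1$ (forced by $(p(p-1),[\mathbb{Q}_f:\mathbb{Q}])=1$) gives $\mathbb{Q}_f \cap \mathbb{Q}(\chi) = \mathbb{Q}$, whence $K/\mathbb{Q}_f$ is cyclic Galois with $\mathrm{Gal}(K/\mathbb{Q}_f) \cong \mathrm{Gal}(\mathbb{Q}(\chi)/\mathbb{Q})$; using this correspondence, $L \cdot \mathbb{Q}_f = \mathbb{Q}_f \cdot F$ for a unique cyclotomic subfield $F \subseteq \mathbb{Q}(\chi)$, and a restriction-of-Galois-action argument shows $d$ also divides $[\mathbb{Q}_f:\mathbb{Q}]$. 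By the coprimality $(p-1, [\mathbb{Q}_f:\mathbb{Q}])=1$, these two divisibilities force $d=1$.

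The hard part will be rigorously establishing the second divisibility $d \mid [\mathbb{Q}_f:\mathbb{Q}]$. When $\mathbb{Q}_f/\mathbb{Q}$ is Galois, it is immediate: every $\sigma \in \mathrm{Gal}(L \cdot \mathbb{Q}_f/L)$ preserves $\mathbb{Q}_f$ setwise, the restriction $\mathrm{Gal}(L \cdot \mathbb{Q}_f/L) \to \mathrm{Gal}(\mathbb{Q}_f/\mathbb{Q})$ is an injective group homomorphism (its kernel fixes $L \cdot \mathbb{Q}_f$ pointwise), so its image has order $d$ dividing $[\mathbb{Q}_f:\mathbb{Q}]$. The subtlety is the non-Galois case, where the set-theoretic restriction to $\mathbb{Q}_f$ is not a homomorphism. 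Here I would exploit that the minimal polynomial of a generator $\alpha$ of $\mathbb{Q}_f$ remains irreducible over $\mathbb{Q}(\chi)$ (since $[\mathbb{Q}_f \cdot \mathbb{Q}(\chi):\mathbb{Q}(\chi)] = [\mathbb{Q}_f:\mathbb{Q}]$), interpret $d$ as the size of the $\mathrm{Gal}(K/L)$-orbit of $\alpha$ inside $K$, and count/control the roots of $f$ lying in $K$ via the structure $K \cong \mathbb{Q}_f \otimes_\mathbb{Q} \mathbb{Q}(\chi)$ and, if necessary, by passing to the Galois closure of $\mathbb{Q}_f$ and bookkeeping which conjugate copies of $\mathbb{Q}_f$ actually land in $K$.
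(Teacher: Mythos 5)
Your overall strategy matches the paper's: apply Theorem \ref{main theorem later section} with $c=1$ to get $K:=\Qq_f(\chi)=L(\mu_p)$ where $L=\Qq(|L_f(\chi)|^2)$, and then squeeze $\Qq_f\subseteq L$ out of the coprimality hypothesis by a degree/Galois argument. The paper packages the second step differently, though, and the difference is instructive. It sets $G_1=\Gal(K/L)$ (well-defined since $K=L(\mu_p)$ is cyclotomic over $L$) and $G_2=\Gal(K/\Qq_f)$ (cyclotomic over $\Qq_f$), both viewed inside $\mathrm{Aut}(K/\Qq)$. From the coprimality it deduces $|G_1|=d_L\mid p-1$ and $|G_2|=(p-1)p^{h-2}$, then argues that $|G_1G_2|=|G_1||G_2|/|G_1\cap G_2|$ divides $[K:\Qq]=d_f(p-1)p^{h-2}$; since $d_L/|G_1\cap G_2|$ divides both $d_L\mid p-1$ and $d_f$, coprimality forces $G_1\cap G_2=G_1$, i.e.\ $G_1\subseteq G_2$, and hence $\Qq_f\subseteq\mathrm{Fix}(G_1)=L$. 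Note that $|G_1\cap G_2|=|\Gal(K/L\Qq_f)|=d_L/[L\Qq_f:L]$, so the paper's conclusion $|G_1\cap G_2|=d_L$ is exactly your $d:=[L\Qq_f:L]=1$; the two formulations are equivalent.

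The gap you flag in your step~2 — establishing $d\mid d_f$ when $\Qq_f/\Qq$ is not Galois — is a genuine subtlety, and it is not a non-issue. Your proposed fix (passing to the Galois closure, reading $d$ off as an orbit size) is the right instinct but is left at a sketch; as you note, the restriction map $\Gal(L\Qq_f/L)\to\mathrm{Aut}(\Qq_f/\Qq)$ is not even well-defined set-theoretically when $\Qq_f$ is not normal. The paper's version of the same difficulty is the assertion that $G_1G_2$ is a \emph{subgroup} of $\mathrm{Aut}(K/\Qq)$: without that, $|G_1G_2|$ is merely $\leq[K:\Qq]$, which (as you can check) does not combine with $(d_L,d_f)=1$ to force $|G_1\cap G_2|=d_L$. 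The paper states this subgroup property without proof. When $\Qq_f/\Qq$ is Galois (hence $K/\Qq$ Galois) it is immediate; in the non-Galois case one has to work, e.g.\ by noting that restriction to $\Qq(\chi)$ gives a homomorphism $\rho:\mathrm{Aut}(K/\Qq)\to\Gal(\Qq(\chi)/\Qq)$ into an abelian group with kernel $N$ of order dividing $d_f$, hence coprime to $|G_1|,|G_2|$, and then controlling the commutator $[G_1,G_2]\subseteq N$. So: your proposal correctly isolates the one delicate point, but neither your sketch nor, strictly speaking, the paper's one-line assertion fully discharges it, and your write-up should either assume $\Qq_f/\Qq$ Galois or supply the cohomological/Schur--Zassenhaus argument in the general case.
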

\begin{rem}
	One can replace $|L_f(\chi)|^2$ by $L_f(\chi)$ in (\ref{Qf inclusion last section}) using  \cite[Theorem \ref{sun}]{sun2019generation}.
\end{rem}

We also deduce a variant of Luo-Ramakrishnan's result \cite[Theorem C]{luo1997determination}. Let $\chi_0$ be a trivial or quadratic Dirichlet character such that $L_f(\chi_0)\not=0$ and set \begin{align*}
R_{f}(\chi,\chi_0):=\frac{|L_f(\chi)|^2}{|L_f(\chi_0)|^2}
\end{align*}
for   $\chi\in\Xipw$. 
We have
\begin{cor}
	 Assume that $f$ satisfies \ref{non-quad}. Let $p > 2\max(2, |\Hom_{\mathbb{Q}}(\Qq_f,\mathbb{C})|)$. Then for any quadratic Dirichlet character $\chi_0$ such that $|L_f(\chi_0)|^2\not=0$, we have
	\begin{align*}
		\mathbb{Q}_f(\chi)=\mathbb{Q}(\mu_p,R_{f}(\chi,\chi_0))
	\end{align*}
	for all but finitely many $\chi\in \Xipw$.
\end{cor}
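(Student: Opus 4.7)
The plan is to deduce this corollary from Theorem~\ref{main theorem later section} applied with the specific constant $c = 1/|L_f(\chi_0)|^2 \in \ov{\Qq}^{\times}$. First I would check that $c$ is an algebraic number in the correct field. By Shimura's integrality, $L_f(\chi_0) \in \Qb_f(\chi_0)$. Using Shimura reciprocity \eqref{shim reci} together with the fact that $\Qb_f$ is totally real, the complex conjugate $\overline{L_f(\chi_0)}$ equals $L_f(\overline{\chi_0}) \in \Qb_f(\chi_0)$. Hence $|L_f(\chi_0)|^2 \in \Qb_f(\chi_0)^{\times}$ and so $c \in \Qb_f(\chi_0)^{\times}$. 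Setting $F = \Qb_f(c)$, we have $\Qb_f \subseteq F \subseteq \Qb_f(\chi_0)$.

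Next I would apply Theorem~\ref{main theorem later section} to $F$ and $c$: for all but finitely many $\chi \in \Xipw$,
\begin{equation*}
F(\chi) = \Qq(\mu_p, c\,|L_f(\chi)|^2) = \Qq(\mu_p, R_f(\chi,\chi_0)).
\end{equation*}
Since $\chi_0 \in \Xipw$ has conductor $p^{h_0}$ with $h_0 \geq 2$, its image generates $\mu_{p^{h_0-1}}$, so in particular $\mu_p \subseteq \Qq(\chi_0)$. Thus $\Qq(\mu_p, R_f(\chi,\chi_0)) \subseteq \Qq(\chi_0, R_f(\chi,\chi_0))$, and since $\Qb_f(\chi) \subseteq F(\chi)$, we obtain the first inclusion
\begin{equation*}
\Qb_f(\chi) \subseteq \Qq(\chi_0, R_f(\chi,\chi_0)).
\end{equation*}

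For the reverse inclusion, I restrict to $\chi$ of conductor at least $p^{h_0}$, which excludes only finitely many $\chi \in \Xipw$; then $\Qq(\chi_0) \subseteq \Qq(\chi) \subseteq \Qb_f(\chi)$. By the same Shimura-reciprocity argument as above, $|L_f(\chi)|^2 = L_f(\chi)L_f(\overline\chi) \in \Qb_f(\chi)$, and $|L_f(\chi_0)|^2 \in \Qb_f(\chi_0) \subseteq \Qb_f(\chi)$, so $R_f(\chi,\chi_0) \in \Qb_f(\chi)$. Hence $\Qq(\chi_0, R_f(\chi,\chi_0)) \subseteq \Qb_f(\chi)$, and combining the two inclusions yields the desired equality. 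The only non-trivial input is Theorem~\ref{main theorem later section}; everything else is bookkeeping about cyclotomic inclusions and complex conjugation, and there is no real obstacle once the correct choice of $c$ is made.
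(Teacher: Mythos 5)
Your proof is correct and follows essentially the same approach as the paper: apply Theorem~\ref{main theorem later section} with a suitable constant $c$ built from $|L_f(\chi_0)|^{-2}$, then sandwich $\Qb_f(\chi)$ between $\Qq(\mu_p, R_f(\chi,\chi_0))$ and $\Qq(\chi_0, R_f(\chi,\chi_0))$ using the cyclotomic inclusion $\mu_p \subset \Qq(\chi_0)$ and Shimura's rationality. The only difference is that the paper takes $c = \alpha|L_f(\chi_0)|^{-2}$ where $\alpha$ is a primitive element of $\Qq(\chi_0)$, whereas you take $c = |L_f(\chi_0)|^{-2}$; your choice is arguably cleaner since the auxiliary $\alpha$ is not needed once one keeps $\mu_p$ explicit in the chain of inclusions, and your write-up of the reverse inclusion (restricting to $\chi$ of conductor at least that of $\chi_0$ so that $\Qq_f(\chi_0)\subseteq\Qq_f(\chi)$) spells out a point the paper leaves implicit.
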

\begin{proof}
	 Choose $c=|L_f(\chi_0)|^{-2}$  (which is in $\Qq_f-\{0\}$ since $\chi_0$ is trivial or quadratic) in Theorem \ref{main theorem later section}. 
\end{proof}

	\bibliography{reference}
	\bibliographystyle{plain}

\end{document}